\documentclass[11pt,a4paper]{amsart}
\title{Flatness and Shipley's algebraicization theorem}
\date{}
\usepackage{amsmath,amssymb,amsthm,tikz, dsfont, stmaryrd, float,tikz-cd,mathrsfs, hyperref, mathtools, todonotes, xypic, enumerate, tipa, fullpage}
\usepackage[T1]{fontenc}
\usepackage{textcomp}
\usepackage{lmodern}
\usepackage[mathscr]{euscript}
\tikzset{
	labl/.style={anchor=south, rotate=270, inner sep=.5mm}
}
%Theorems etc
\newtheorem{thm}{Theorem}[section]

\newtheorem{prop}[thm]{Proposition}
\newtheorem{lem}[thm]{Lemma}
\newtheorem{cor}[thm]{Corollary}

\theoremstyle{definition}
\newtheorem{defn}[thm]{Definition}

\newtheorem{rem}[thm]{Remark}

\newtheorem{eg}[thm]{Example}

\newtheorem{notation}[thm]{Notation}
\newtheorem{hyp}[thm]{Hypothesis}

%Macros
\newcommand{\mc}{\mathcal}

\renewcommand{\lim}[1]{\mathop{\text{lim}}_{#1}}

\newcommand{\pushout}{\ulcorner}
\newcommand{\map}{\mathrm{map}}

\renewcommand{\phi}{\varphi}
\renewcommand{\epsilon}{\varepsilon}

%Adjoints Notation
\newcommand{\res}[1][\theta]{{#1}^*}
\newcommand{\ext}[1][\theta]{{#1}_*}

%Specific Adjoints

%Categories

%\renewcommand{\mod}[1]{{#1}\text{-mod}}
\renewcommand{\mod}[1]{\mathrm{Mod}_{#1}}

\newcommand{\symsp}{\mathrm{Sp}^\Sigma}

\newcommand{\sset}{\mathrm{sSet}}

\newcommand{\sq}{\mathrm{s}\mathbb{Q}\text{-}\mathrm{mod}}

\newcommand{\chq}{\mathrm{Ch}_\mathbb{Q}^+}

\newcommand{\Sp}{\mathrm{Sp}}

\newcommand{\C}{\mathscr{C}}
\newcommand{\D}{\mathscr{D}}
\setlength{\parindent}{0cm}
\setlength{\parskip}{1ex}
%\setlength{\marginparwidth}{2cm}
%\usepackage{fancyhdr}
%
%\pagestyle{fancy}
%\fancyhf{}
%\fancyhead[RE,LO]{\nouppercase\leftmark}
%\fancyfoot[C]{\thepage}
\begin{document}
\author{Jordan Williamson}
\address{School of Mathematics and Statistics, Hicks Building, Sheffield S3 7RH, UK}
\email{jwilliamson3@sheffield.ac.uk}
\begin{abstract}
We provide an enhancement of Shipley's algebraicization theorem which behaves better in the context of commutative algebras. This involves defining flat model structures as in Shipley and Pavlov-Scholbach, and showing that the functors still provide Quillen equivalences in this refined context. The use of flat model structures allows one to identify the algebraic counterparts of change of groups functors, as demonstrated in forthcoming work of the author. 
\end{abstract}

\maketitle

\setcounter{tocdepth}{1}

\tableofcontents

\section{Introduction}
Many concepts and constructions in algebra can be understood in a homotopy invariant sense, and the derived category of a ring is the universal category in which to study these. In turn, these homotopy invariant algebraic notions can be translated into stable homotopy theory~\cite{ElmendorfKrizMandellMay97} and this translation to \emph{spectral algebra} has led to a powerful new point of view on many areas such as modular representation theory~\cite{DwyerGreenleesIyengar06, Greenlees18c}.  Robinson~\cite{Robinson87} showed that the category of spectra contains `extraordinary' derived categories generalizing the derived category of a ring. Shipley~\cite{Shipley07} gave a more precise and general version of Robinson's result in terms of a zig-zag of Quillen equivalences. This paper is a contribution to the understanding of the relationship between spectral and homological algebra.

Passing between the worlds of spectral algebra and homological algebra is a valuable technique. It allows the reduction of topological questions to algebraic questions, and conversely, allows the importation of algebraic methods to the realm of spectra. Associated to any ring $R$ there is an Eilenberg-MacLane spectrum $HR$, and the homological algebra of $R$ is equivalent to the spectral algebra of $HR$. This relation is particularly striking in the case that $R=\mathbb{Q}$, as the rational sphere spectrum is equivalent to $H\mathbb{Q}$. 

Let $R$ be a commutative ring. It was shown by Shipley~\cite{Shipley07} that there is a zig-zag of Quillen equivalences between $HR$-module spectra and chain complexes of $R$-modules. Moreover, this is a zig-zag of symmetric monoidal Quillen equivalences, so that it gives a zig-zag of Quillen equivalences between $HR$-algebra spectra and differential graded $R$-algebras. 
Shipley's algebraicization theorem shows that spectral algebra is a vast generalization of homological algebra. Moreover, it provides a bridge between the worlds of topology and algebra. This bridge has been widely used in the construction of algebraic models for rational equivariant cohomology theories by Barnes, Greenlees, K\k{e}dziorek and Shipley, see~\cite{Barnes09,Barnes17,BarnesGreenleesKedziorek18,GreenleesShipley11, GreenleesShipley14,GreenleesShipley18,Kedziorek17,PolWilliamson,Shipley02}. 

By Shipley's algebraicization theorem, an $HR$-algebra $X$ corresponds to a differential graded $R$-algebra $\Theta X$ and there is a Quillen equivalence $\mod{X} \simeq_Q \mod{\Theta X}.$ However, if $X$ is in addition a commutative $HR$-algebra, it does not correspond to a commutative differential graded $R$-algebra, but rather to a differential graded $E_\infty$-$R$-algebra, see~\cite{RichterShipley17}.

When $R=\mathbb{Q}$, more is true. A commutative $H\mathbb{Q}$-algebra $X$ does correspond to a commutative differential graded $\mathbb{Q}$-algebra by~\cite[1.2]{Shipley07}. More precisely, there is zig-zag of natural weak equivalences $\Theta X \simeq \Theta' X$ where $\Theta' X$ is a commutative DGA. However, despite the fact that the categories of modules have symmetric monoidal structures, the Quillen equivalence $\mod{X} \simeq_Q \mod{\Theta' X}$ is \emph{not} a symmetric monoidal Quillen equivalence. This is because the upgrading of a Quillen equivalence to the categories of modules involves cofibrant replacement of monoids~\cite[3.12(1)]{SchwedeShipley03} which will destroy commutativity and hence the symmetric monoidal structure.

The stable model structure on spectra does not behave well with respect to commutative algebras, in the sense that for a commutative ring spectrum $S$, cofibrant commutative $S$-algebras are not cofibrant as $S$-modules in general. Shipley~\cite{Shipley04} constructed the flat model structure (also called the $S$-model structure) on symmetric spectra, which does satisfy the property that cofibrant commutative algebras are cofibrant as modules. Pavlov-Scholbach~\cite{PavlovScholbach18} extended this to the case of symmetric spectra in general model categories. This extra compatibility between commutative algebras and modules provides several useful tools that would otherwise not be valid. For a concrete example of where this compatibility can be useful, see the next section of the introduction.

In light of these considerations, the goal of this paper is threefold. Firstly, we show that the zig-zag of Quillen equivalences in Shipley's algebraicization theorem still holds in flat model structures which satisfy the extra compatibility between commutative algebras and modules discussed above. See Section~\ref{sec:shipley1} for a precise statement of the zig-zag of symmetric monoidal Quillen equivalences in this first theorem. 
\begin{thm}\label{thm:main1}
	There is a zig-zag of symmetric monoidal Quillen equivalences \[\mod{H\mathbb{Q}}^\mathrm{flat} \simeq_Q \mathrm{Ch}_\mathbb{Q}\]
	where the intermediate categories have the flat model structure.
\end{thm}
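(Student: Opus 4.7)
The plan is to mimic Shipley's original proof in \cite{Shipley07}, taking each intermediate category in her zig-zag of Quillen equivalences and replacing its stable model structure by the corresponding flat model structure, then verifying that the same sequence of functors still provides a zig-zag of symmetric monoidal Quillen equivalences. Shipley's original zig-zag proceeds through $H\mathbb{Q}$-modules in symmetric spectra of simplicial $\mathbb{Q}$-modules, through $\symsp(\sq)$, through $\sq$ itself via an adjunction $(L,U)$, and finally to $\chq$ via the normalization functor of the Dold--Kan correspondence.

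The first step is to equip each of these intermediate categories with a flat model structure: for symmetric spectra in $\sset$ this is done by Shipley \cite{Shipley04}, and Pavlov--Scholbach \cite{PavlovScholbach18} supply flat model structures on symmetric spectra valued in more general model categories, covering $\symsp(\sab)$ and $\symsp(\sq)$. Since the flat and stable model structures on each such category share the same weak equivalences, the identity functors induce a Quillen equivalence $(\mathrm{id},\mathrm{id}) : (-)^{\mathrm{flat}} \rightleftarrows (-)^{\mathrm{stable}}$. Combined with the fact that Shipley's original zig-zag induces an equivalence of the underlying homotopy categories, this observation means that once each adjunction in the refined zig-zag is verified to be a Quillen adjunction in the flat setting, the Quillen equivalence property follows automatically.

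The substantive content is therefore to show, for each adjunction in the zig-zag, that the left adjoint preserves flat cofibrations and flat acyclic cofibrations; by standard arguments this reduces to checking the claim on a set of generating (acyclic) cofibrations of the flat model structure. Symmetric monoidality is then handled by the pushout product axiom combined with the monoidality of the underlying functors on the point-set level. The main obstacle I expect is that the generators of the flat model structure differ significantly from those of the stable model structure --- they are built from arbitrary cofibrant $\Sigma_n$-objects rather than free $\Sigma_n$-cells --- so the left Quillen property has to be reproved from scratch rather than inherited from Shipley's arguments. The comparison functor $L : \sq \to \symsp(\sq)$ and its module-level analogue over $H\mathbb{Q}$ are where I anticipate this verification to be most intricate, since the equivariant structure of the flat generators must be tracked carefully through the prolongation construction.
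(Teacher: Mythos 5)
Your overall reduction strategy is correct and matches the paper's: replace stable by flat, use the identity Quillen equivalences plus 2-out-of-3 to reduce the Quillen-equivalence claim to a Quillen-adjunction claim. But you have missed the observation that collapses nearly all of the anticipated work. Because $\mathbb{Q}$ has characteristic zero, the averaging argument (Proposition~\ref{prop:injective}) shows that for $G$ a finite group the naive, blended and injective model structures on $G\text{-}\sq$ and $G\text{-}\chq$ all coincide. Consequently the flat and stable model structures on $\symsp(\sq)$ and $\symsp(\chq)$ are \emph{identical} (Corollary~\ref{cor:stable and flat}). Your anticipated ``main obstacle'' --- that the flat generators are built from arbitrary cofibrant $\Sigma_n$-objects rather than free cells, forcing the left Quillen property to be reproved from scratch --- simply does not arise for the middle two adjunctions; the model structures and hence the generators are literally the same as in Shipley's stable setting, so her arguments apply verbatim. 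Only the first adjunction $Z \dashv U$, between $\mod{H\mathbb{Q}}^{\mathrm{flat}}$ (where flat and stable genuinely differ) and $\symsp(\sq)$, needs a new verification.

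For that remaining adjunction, your plan to check the left adjoint $Z = H\mathbb{Q}\otimes_{\widetilde{\mathbb{Q}}H\mathbb{Q}}\widetilde{\mathbb{Q}}(-)$ directly on flat generating cofibrations would be awkward, because the flat generators in $\mod{H\mathbb{Q}}$ do involve non-free $\Sigma_n$-equivariance. The paper instead applies Dugger's criterion (Proposition~\ref{prop:Dugger}) to the right adjoint: it suffices to show $U$ preserves acyclic flat fibrations and flat fibrations between flat fibrant objects, using the levelwise description of these classes and Proposition~\ref{prop:fibrant} to handle fibrancy. This is considerably more tractable than attacking the generators. So the route you propose is not wrong in principle, but without Corollary~\ref{cor:stable and flat} and the Dugger reduction it would be substantially harder to execute, and the difficulties you flag are largely illusory once the rational coincidence of model structures is in place. (As a minor point, the zig-zag does not pass through $\sq$ itself; $L$ is the left adjoint of $\phi^*N : \symsp(\sq)\to\symsp(\chq)$.)
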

In fact we show that the flat model structures on the intermediate categories are the same as the stable model structures used by Shipley~\cite{Shipley07}, see Corollary~\ref{cor:stable and flat}. 

Secondly, we use this theorem to give a new proof of the following theorem, which appears in the body of the paper as Theorem~\ref{thm:commHQalg}. In particular, our approach does not pass through the category of $E_\infty$-algebras as in the proof given by Richter-Shipley~\cite{RichterShipley17}. 
\begin{thm}
There is a zig-zag of Quillen equivalences between the category of commutative $H\mathbb{Q}$-algebras and the category of commutative rational DGAs.
\end{thm}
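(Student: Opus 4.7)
The plan is to lift the zig-zag of symmetric monoidal Quillen equivalences of Theorem~\ref{thm:main1} termwise to categories of commutative monoid objects. The entire reason for working in flat model structures in Theorem~\ref{thm:main1} is that they are compatible with commutative algebras, in the precise sense that cofibrant commutative algebras are cofibrant as modules. This sidesteps the obstruction recalled in the introduction: one no longer has to cofibrantly replace monoids associatively to perform the lifting, which would otherwise destroy commutativity.

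First I would verify that each of the intermediate categories in the flat zig-zag satisfies the commutative monoid axiom of White. For $\mod{H\mathbb{Q}}^\mathrm{flat}$ this is essentially Shipley's treatment of the positive flat $S$-model structure, while for the analogues enriched in $\sab$ and $\ch$ it follows from the machinery of Pavlov-Scholbach applied to symmetric spectra in these base categories. The commutative monoid axiom ensures that commutative algebras inherit a transferred model structure and, combined with flatness, that the forgetful functor to modules preserves cofibrancy.

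Second I would apply a rectification theorem to each arrow of the zig-zag: given a weak symmetric monoidal Quillen equivalence between flat model categories satisfying the commutative monoid axiom, the induced adjunction on commutative monoids is again a Quillen equivalence. The derived unit and counit of the commutative-algebra adjunction agree on flat-cofibrant input with their underlying module-level counterparts, which are weak equivalences by Theorem~\ref{thm:main1}. Concatenating produces a zig-zag of Quillen equivalences between commutative $H\mathbb{Q}$-algebras and commutative rational DGAs, as required.

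The main obstacle I expect is checking the commutative monoid axiom uniformly across the intermediate flat model structures, since the axiom controls the homotopical behaviour of symmetric power constructions and interacts delicately with the passage between the base categories $\sab$ and $\ch$. A secondary obstacle is that several of the intermediate adjunctions (notably those arising from Dold-Kan) are only lax symmetric monoidal via shuffle transformations, so one must verify that the associated natural transformations on commutative algebras remain weak equivalences on flat-cofibrant input, which is precisely where flatness of cofibrant commutative algebras as modules is essential.
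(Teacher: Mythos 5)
Your high-level strategy is correct and matches the paper's: lift the flat zig-zag of Theorem~\ref{thm:main1} termwise to commutative monoids, using the compatibility of the flat model structure with commutative algebras. However, the proposal has a genuine gap in the middle step, where you invoke a ``rectification theorem'' stating that a weak symmetric monoidal Quillen equivalence between flat model categories satisfying the commutative monoid axiom lifts to a Quillen equivalence on commutative monoids. No such theorem exists off the shelf at this level of generality, and the paper's main technical contribution at this stage is precisely to prove one (Theorem~\ref{thm:lift to comm mon}). Moreover, the paper's version requires an extra hypothesis that your proposal does not surface: for cofibrant $X$, the orbit comparison map $F(X^{\wedge n})/\Sigma_n \to (FX)^{\wedge n}/\Sigma_n$ must be a weak equivalence (Hypothesis~\ref{hyp:orbits}). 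This orbits condition is the crux of the argument, because for a merely \emph{weak} symmetric monoidal Quillen pair the left adjoint $F$ does not preserve commutative monoids, so one must show the comparison $FA \to \widetilde{F}A$ to the induced left adjoint is a weak equivalence, and this is done by a substantial cell induction on $\mathbb{P}_\C(I)$-cell complexes using White's filtrations. Your claim that ``the derived unit and counit of the commutative-algebra adjunction agree on flat-cofibrant input with their underlying module-level counterparts'' papers over exactly this difficulty.

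Two further specific points you should attend to. First, the model structure on commutative algebras is transferred from the \emph{positive} flat model structure (not the flat one, because of Lewis' obstruction), while cofibrancy as a module is measured in the (non-positive) flat model structure; Theorem~\ref{thm:lift to comm mon} is explicitly formulated with a pair of model structures $\C$ and $\widetilde{\C}$ on the same category to handle precisely this mismatch, and you need this device. Second, for the middle Dold--Kan adjunction $L \dashv \phi^*N$, which is only \emph{weak} symmetric monoidal, Hypothesis~\ref{hyp:orbits} must be verified directly. The paper does this by showing (condition (ii) of Lemma~\ref{lem:orbits}) that underlying cofibrations in $G\text{-}\symsp(\sq)_{\mathrm{pf}}$ are naive cofibrations, via a rational averaging argument on levelwise lifts, so that the orbit functor is left Quillen. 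You identify the shuffle/lax issue qualitatively, but the concrete verification of the orbit hypothesis via equivariant cofibrancy is the step you would need to supply.
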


Finally, we prove the following theorem which appears as Theorem~\ref{thm:symmmon} in the main body of the paper.
\begin{thm}
For a commutative $H\mathbb{Q}$-algebra $X$ there is a zig-zag of weak symmetric monoidal Quillen equivalences $\mod{X} \simeq_Q \mod{\underline{\Theta}X}$ where $\underline{\Theta}X$ is a commutative DGA.
\end{thm}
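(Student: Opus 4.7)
The plan is to apply the zig-zag of symmetric monoidal Quillen equivalences from Theorem~\ref{thm:main1} to $X$, and then to lift each step of the resulting zig-zag to module categories. The entire argument depends on working with flat model structures throughout, since this is what permits the lift to preserve commutativity.

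First I would take a cofibrant replacement $X^c \to X$ in the category of commutative $H\mathbb{Q}$-algebras. The decisive property of the flat model structure is that $X^c$ is then automatically cofibrant as an $H\mathbb{Q}$-module. Because each left adjoint in the zig-zag of Theorem~\ref{thm:main1} is (lax or strong) symmetric monoidal, it sends commutative algebras to commutative algebras, and so transporting $X^c$ across the zig-zag yields a sequence of commutative algebras in the intermediate symmetric monoidal model categories. By the previous theorem this sequence terminates in a commutative DGA, which I identify with $\underline{\Theta}X$. Moreover, cofibrancy as a module is preserved at every stage, since each left Quillen functor in the zig-zag sends the monoidal unit to a cofibrant object and preserves cofibrations.

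Next I would upgrade each weak symmetric monoidal Quillen equivalence in the zig-zag to a weak symmetric monoidal Quillen equivalence on module categories over the associated commutative algebra. This is exactly the situation handled by Schwede-Shipley~\cite{SchwedeShipley03}: once the algebra is cofibrant as a module, no further cofibrant-monoid replacement is required to lift the equivalence, and the symmetric monoidal structure on modules (via the relative smash product) is therefore transported intact. In the ordinary stable setting a Schwede-Shipley cofibrant-monoid replacement is needed and it destroys commutativity---this is the obstruction highlighted in the introduction---so it is precisely the flat model structure that allows the argument to proceed.

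The main obstacle is verifying that each of the upgraded adjunctions of module categories is a genuinely (weak) \emph{symmetric monoidal} Quillen equivalence, not merely a Quillen equivalence. This amounts to checking that the relative smash products $M \wedge_{A} N$ are homotopically well-behaved (which follows from $A$ being cofibrant as a module) and that the comparison maps encoding the monoidal structure are weak equivalences on cofibrant modules; both statements are tracked back to the symmetric monoidality of the underlying zig-zag from Theorem~\ref{thm:main1}. Composing the resulting upgraded adjunctions yields the advertised zig-zag $\mod{X} \simeq_Q \mod{\underline{\Theta}X}$ of weak symmetric monoidal Quillen equivalences.
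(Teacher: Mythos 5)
Your plan has two genuine problems, one conceptual and one technical. Conceptually, you attribute the whole argument to the flat model structure ("it is precisely the flat model structure that allows the argument to proceed"), but this is a misdiagnosis: the paper's Theorem~\ref{thm:symmmon} actually asserts and proves the symmetric monoidal module equivalence in \emph{both} the stable and flat model structures, because the proof only uses that the model structures share the same weak equivalences. The flat model structure is essential for lifting to commutative \emph{algebra} categories (Theorem~\ref{thm:commHQalg}), but it is not the key ingredient here. The actual mechanism the paper uses is Lemma~\ref{lem:cofremoval} (together with its counterpart~\cite[3.12(2)]{SchwedeShipley03}): whenever the left (resp.\ right) adjoint of a strong (resp.\ weak) symmetric monoidal Quillen equivalence preserves \emph{all} weak equivalences and Quillen invariance holds, the module equivalence becomes symmetric monoidal for any commutative monoid $S$, with no cofibrant replacement of $S$ at all. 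To achieve this, the paper trades the functors $Zc$ and $L$ appearing in the zig-zag of Theorem~\ref{thm:main1} for the functors $\res[\alpha]\widetilde{\mathbb{Q}}$, $\ext[\alpha]^{\widetilde{\mathbb{Q}}A}$, $\res[\phi]N$, and $D$ — using Shipley's natural weak equivalence $Zc \simeq \res[\alpha]\widetilde{\mathbb{Q}}$ — each of which preserves all weak equivalences. Your proposal does not use this and therefore misses the decisive input.

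The technical gap is that your cofibrancy-transport step does not go through. You claim "cofibrancy as a module is preserved at every stage, since each left Quillen functor in the zig-zag sends the monoidal unit to a cofibrant object and preserves cofibrations," but to carry a commutative algebra from $\symsp(\sq)$ to $\symsp(\chq)$ you must apply $\res[\phi]N$, which is a \emph{right} adjoint — the lax monoidal side of a weak monoidal Quillen pair, not a left Quillen functor. Right Quillen functors do not preserve cofibrancy, so the image $\res[\phi]N(Z X^c)$ will not in general be a cofibrant module, and the "cofibrant algebra $\Rightarrow$ cofibrant module $\Rightarrow$ no monoid replacement needed" chain breaks exactly there. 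Moreover, even if the algebra were cofibrant, it would only be cofibrant as a \emph{commutative} algebra (hence as a module), not as an associative monoid, which is what~\cite[3.12(1)]{SchwedeShipley03} needs; bridging that would still require an argument along the lines of Lemma~\ref{lem:cofremoval}. Finally, the statement to be proved concerns $\mod{X}$, not $\mod{X^c}$, so you would also need to route back through Quillen invariance of modules along $X^c \to X$, a step your proposal omits.
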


\subsection*{Motivation and related work}
The author's main motivation comes from the study of algebraic models for rational equivariant cohomology theories. A key step in the construction of algebraic models is the passage from modules over a commutative $H\mathbb{Q}$-algebra to modules over a commutative DGA via Shipley's algebraicization theorem. Therefore, a deep understanding of Shipley's algebraicization theorem provides key insights into the understanding of algebraic models for rational equivariant cohomology theories.

Working in the flat model structure provides valuable techniques which are not valid in the stable model structure. 
In forthcoming work~\cite{changeofgroups}, the author considers the correspondence of the change of groups functors in rational equivariant stable homotopy theory with functors between the algebraic models. In particular, this includes studying how the extension-restriction-coextension of scalars adjoint triple along a map of commutative $H\mathbb{Q}$-algebras $\theta\colon S \to R$ behaves with respect to the Quillen equivalences in Shipley's algebraicization theorem.

The restriction of scalars functor along a map of commutative monoids $\theta\colon S \to R$ in a symmetric monoidal model category is always right Quillen in the model structure right lifted from the underlying category, but it is not left Quillen in general. If the monoidal unit of the underlying category is cofibrant, then restriction of scalars is left Quillen if and only if $R$ is cofibrant as an $S$-module. Since a key step in the proof of algebraic models is a formality argument based on the fact that polynomial rings are formal as commutative DGAs, one needs to be able to replace $R$ in such a way that it is still a commutative $S$-algebra, and is cofibrant as an $S$-module. This replacement is possible in the flat model structure, but not in the stable model structure on spectra. Therefore, Theorem~\ref{thm:main1} provides the necessary setup in which to attack the correspondence of functors along the bridge which Shipley's algebraicization theorem provides between topology and algebra.

The use of the flat model structure allows the extension of the result to commutative algebra objects, so that we prove a Quillen equivalence between the category of commutative $H\mathbb{Q}$-algebras and the category of commutative rational DGAs. Richter-Shipley~\cite{RichterShipley17} prove that the category of commutative $HR$-algebras is Quillen equivalent to the category of differential graded $E_\infty$-$R$-algebras for any commutative ring $R$. Since $E_\infty$-algebras in chain complexes of $\mathbb{Q}$-modules can be rectified to strictly commutative objects, see for example~\cite[\S 7.1.4]{HA}, as a corollary~\cite[8.4]{RichterShipley17} of Richter and Shipley's result one obtains that the category of commutative $H\mathbb{Q}$-algebras is Quillen equivalent to the category of commutative rational DGAs. We give a concrete zig-zag of Quillen equivalences which lands naturally in commutative DGAs, bypassing the need for the rectification step. We expect that this direct approach will enable a better understanding of algebraic models for naive-commutative rational $G$-spectra as studied by Barnes-Greenlees-K\k{e}dziorek~\cite{BarnesGreenleesKedzioreknaive,BarnesGreenleesKedziorek19}. White-Yau~\cite{WhiteYau19} give an alternative approach to this zig-zag of Quillen equivalences by using the stable model structure and their theory of lifting Quillen equivalences to categories of coloured operads. The generality of their theory leads to more stringent hypotheses than our approach, see for example~\cite[3.27]{WhiteYau19}. Our approach exploits the fact that in the flat model structure, cofibrant commutative algebras forget to cofibrant modules.

Finally we give a concrete zig-zag of \emph{symmetric monoidal} Quillen equivalences between the category of modules over a commutative $H\mathbb{Q}$-algebra and the category of modules over a commutative DGA. The result is assumed without proof in the literature, see for example~\cite[3.4.4]{BarnesGreenleesKedziorekShipley17}. Due to the importance of this result in the construction of algebraic models, we believe it is valuable to make the proof explicit. Shipley proved that there is a Quillen equivalence~\cite[2.15]{Shipley07} between modules over an $HR$-algebra $X$ and modules over a DGA $\Theta X$ for any ring $R$. In the case that $R = \mathbb{Q}$, Shipley furthermore proves that $\Theta X$ is naturally weakly equivalent to a \emph{commutative} DGA $\Theta'X$~\cite[1.2]{Shipley07}. A dual of a result of Schwede-Shipley~\cite[3.12(2)]{SchwedeShipley03} allows one to conclude moreover that there is a commutative DGA $\underline{\Theta}X$ and a zig-zag of \emph{symmetric monoidal} Quillen equivalences $\mod{X} \simeq_Q \mod{\underline{\Theta} X}.$ The fact that this is a symmetric monoidal Quillen equivalence has been a vital ingredient in the construction of symmetric monoidal algebraic models, see~\cite[3.4.4]{BarnesGreenleesKedziorekShipley17} and~\cite[9.6]{PolWilliamson}.

\subsection*{Outline of the paper}
We recall the key background on model categories in Section 2, and on symmetric spectra in general model categories in Section 3. In Section 4, we recall results from Pavlov-Scholbach~\cite{PavlovScholbach18} which enable the construction of flat model structures on symmetric spectra in general model categories, and apply these results to our cases of interest. Section 5 is dedicated to the proof of Theorem~\ref{thm:main1}. In Section 6 we extend our results to show that the category of commutative $H\mathbb{Q}$-algebras is Quillen equivalent to the category of commutative rational DGAs. Finally, in Section 7 we consider the extension to modules over commutative $H\mathbb{Q}$-algebras.

\subsection*{Conventions} 
We write the left adjoint above the right adjoint in an adjoint pair displayed horizontally, and on the left in an adjoint pair displayed vertically.

\subsection*{Acknowledgements}
I am grateful to John Greenlees and Luca Pol for their comments on this paper and many helpful discussions. I would also like to thank Brooke Shipley and Sarah Whitehouse for many useful conversations and suggestions. I am also grateful to the referee for their helpful comments and suggestions on the preliminary version of this paper.

\section{Model categorical preliminaries}
In this section we recall the necessary background on model categories which we require for the paper.
\subsection{Bousfield localization}
Firstly we recall the definitions and key properties of left Bousfield localizations from~\cite{Hirschhorn03}.

\begin{defn}
	Let $\C$ be a model category and let $S$ be a collection of maps in $\C$.
	\begin{itemize}
		\item An object $W$ in $\C$ is \emph{$S$-local} if it is fibrant in $\C$ and for every $s\colon A \to B$ in $S$, the natural map $\map(B,W) \to \map(A,W)$ is a weak equivalence of homotopy function complexes.
		\item A map $f\colon X \to Y$ in $\C$ is an \emph{$S$-local equivalence} if for every $S$-local object $W$, the natural map $\map(Y,W) \to \map(X,W)$ is a weak equivalence of homotopy function complexes.
	\end{itemize}
\end{defn}

The \emph{left Bousfield localization} of $\C$ at $S$ (if it exists), denoted $L_S\C$, is the model structure on $\C$ in which the weak equivalences are the $S$-local equivalences and the cofibrations are the same as in $\C$. The fibrant objects are the $S$-local objects. We call the fibrations the $S$-local fibrations. 

The left Bousfield localization of $\C$ at $S$ exists if $S$ is a set of maps and $\C$ is left proper and cellular~\cite[4.1.1]{Hirschhorn03}, or if $S$ is a set of maps and $\C$ is left proper and combinatorial~\cite[4.7]{Barwick10}. Any weak equivalence in $\C$ is an $S$-local equivalence, so it follows that the identity functors give a Quillen adjunction $\C \rightleftarrows L_S\C.$

\begin{prop}[{\cite[3.3.16]{Hirschhorn03}}, {\cite[7.21]{JoyalTierney07}}]\label{prop:fibrationsbetweenlocals}
	Let $\C$ be a model category and $S$ a set of maps in $\C$.
	\begin{enumerate}
		\item If $f$ is an $S$-local equivalence between $S$-local objects, then $f$ is a weak equivalence in $\C$.
		\item If $f$ is a fibration between $S$-local objects, then $f$ is an $S$-local fibration.
	\end{enumerate}
\end{prop}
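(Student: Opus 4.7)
For part (1), the plan is to exploit the $S$-locality of both the source and target of $f$ to produce a two-sided homotopy inverse. Applying the defining condition of an $S$-local equivalence with test object $W = X$ yields a weak equivalence of homotopy function complexes $f^{*}\colon \map(Y,X) \to \map(X,X)$. Passing to $\pi_{0}$, one obtains a homotopy class $g\colon Y \to X$ with $gf \simeq \mathrm{id}_{X}$. Using $W = Y$ instead gives a weak equivalence $f^{*}\colon \map(Y,Y) \to \map(X,Y)$, which is in particular injective on $\pi_{0}$; since $f^{*}[fg] = [fgf] = [f] = f^{*}[\mathrm{id}_{Y}]$, this forces $fg \simeq \mathrm{id}_{Y}$. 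After cofibrantly replacing $X$ and $Y$ if necessary to legitimately identify $\pi_{0}\map$ with hom-sets in $\mathrm{Ho}(\C)$, the map $f$ becomes an isomorphism in $\mathrm{Ho}(\C)$ and hence a weak equivalence in $\C$.

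For part (2), the plan is to verify the right lifting property of $f$ against every cofibration $i\colon A \to B$ which is an $S$-local equivalence. First, factor $f$ in $L_{S}\C$ as $f = p \circ j$ with $j\colon X \to X'$ a trivial cofibration in $L_{S}\C$ and $p\colon X' \to Y$ an $S$-local fibration. Since $Y$ is fibrant in $L_{S}\C$ and $p$ is an $S$-local fibration, $X'$ is also fibrant in $L_{S}\C$, hence $S$-local. Now $j$ is an $S$-local equivalence between $S$-local objects, so part (1) upgrades it to a weak equivalence in $\C$, and since cofibrations agree in $\C$ and $L_{S}\C$, $j$ is in fact a trivial cofibration in $\C$. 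Because $f$ is a fibration in $\C$, lifting in the square with $j$ on the left and $f$ on the right yields a retraction $r\colon X' \to X$ of $j$ satisfying $fr = p$. Given any lifting problem for $i$ against $f$, composing the top map with $j$ produces a lifting problem for $i$ against the $S$-local fibration $p$ which admits a solution $\widetilde{h}\colon B \to X'$; the composite $r\widetilde{h}\colon B \to X$ then solves the original problem.

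The principal obstacle I anticipate lies in part (1): since $X$ and $Y$ are fibrant but not a priori cofibrant, identifying $\pi_{0}\map$ with morphisms in $\mathrm{Ho}(\C)$ and invoking Whitehead's theorem requires first passing through cofibrant replacements and verifying the homotopy identities at that level. Once this technical point is handled carefully, part (2) follows cleanly from only the factorization axiom in $L_{S}\C$, the coincidence of cofibrations between $\C$ and $L_{S}\C$, and the application of part (1) to the trivial cofibration arising in the factorization.
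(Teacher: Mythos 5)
Your proof is correct and follows the standard route found in the references the paper cites (Hirschhorn 3.3.16 / 3.2.13 and Joyal--Tierney), which is what the paper relies on rather than supplying its own argument. For part (1), the two tests $W=X$ and $W=Y$ give precisely the Yoneda-style argument producing a two-sided homotopy inverse, and then one concludes from saturation of weak equivalences (a map that becomes invertible in $\mathrm{Ho}(\C)$ is a weak equivalence). Your flagged concern is actually handled automatically: because $\map(-,-)$ in the definition denotes homotopy function complexes, $\pi_0\map(A,B)\cong\mathrm{Ho}(\C)(A,B)$ holds for \emph{all} objects with no separate cofibrant replacement step needed, so the argument is cleaner than you feared. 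For part (2), factoring $f=p\circ j$ in $L_S\C$, noting $X'$ is $S$-local, applying part (1) to $j$, and using the lift $r$ to exhibit $f$ as a retract of $p$ (which is what your explicit lifting computation amounts to) is exactly the standard retract argument; $S$-local fibrations are closed under retracts, so $f$ is an $S$-local fibration. No gaps.
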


We now recall a result of Dugger~\cite[A.2]{Dugger01}, which when used in conjunction with Proposition~\ref{prop:fibrationsbetweenlocals} simplifies the process of proving a Quillen adjunction between left Bousfield localizations.
\begin{prop}\label{prop:Dugger}
	Let $F:\C\rightleftarrows \D:G$ be an adjunction, where $\C$ and $\D$ are model categories. Then $G$ is right Quillen if and only if $G$ preserves fibrations between fibrant objects and all acyclic fibrations.
\end{prop}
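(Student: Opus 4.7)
The forward direction is immediate: a right Quillen functor preserves all fibrations and all acyclic fibrations by definition, and in particular the restriction to fibrations between fibrant objects.

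For the converse, the task is to show that $G$ preserves every fibration of $\D$, since preservation of acyclic fibrations is given by hypothesis. The plan is a reduction, via functorial fibrant replacement, to the case of fibrations between fibrant objects where the hypothesis applies directly, followed by a transfer argument back to the original fibration. Given a fibration $p\colon X \to Y$ in $\D$, I would first factor $Y \to \ast$ as a trivial cofibration $Y \xrightarrow{\sim} RY$ followed by a fibration $RY \to \ast$, so that $RY$ is fibrant. Then I would factor the composite $X \to Y \to RY$ as a trivial cofibration $X \xrightarrow{\sim} RX$ followed by a fibration $Rp\colon RX \to RY$; both $RX$ and $RY$ are fibrant, so $Rp$ is a fibration between fibrant objects, and by hypothesis $G(Rp)$ is a fibration in $\C$.

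The main obstacle is to transfer this conclusion back to $Gp$ itself. I would form the pullback $P = RX \times_{RY} Y$ in $\D$, which receives a canonical comparison map $X \to P$ over $Y$ from the universal property and whose projection $P \to Y$ is a fibration as the pullback of $Rp$. Since $G$ is a right adjoint it preserves pullbacks, so $GP \to GY$ is the pullback of the fibration $G(Rp)$ along $G(Y \to RY)$, and is therefore a fibration in $\C$. The crucial final step is to exhibit $Gp$ as a retract of $GP \to GY$ in the arrow category of $\C$: using that $p$ is itself a fibration, one invokes the lifting axiom (after replacing the comparison $X \to P$ by a trivial cofibration via a further factorization, so that its lifting against $p$ is legitimate) to produce a retraction of the comparison map over $Y$, and then applies $G$ to transport the retract structure to $\C$. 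Since a retract of a fibration is a fibration, this would yield the desired conclusion. Making this retract argument work cleanly without auxiliary properness assumptions is the delicate heart of the proof, and I expect it to be the main obstacle; in practice one leverages the combination of the lifting axiom and the compatibility between $F$, $G$, and the unit/counit to push through the retraction.
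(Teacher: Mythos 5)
The paper cites this result to Dugger~\cite[A.2]{Dugger01} without giving a proof, so there is no argument in the paper to compare against; I will instead assess your proposal directly. You have correctly located the delicate point, but the retract argument does not close. After factoring the comparison map $j\colon X \to P$ as a trivial cofibration $\iota\colon X \to X'$ followed by a fibration $q\colon X' \to P$, lifting $\iota$ against $p$ exhibits $p$ as a retract of the composite $\pi q\colon X' \to Y$, \emph{not} of $\pi\colon P \to Y$. Applying $G$, you learn that $Gp$ is a retract of $G\pi\cdot Gq$; you know $G\pi$ is a fibration, but $q$ is merely some fibration in $\D$ (not between fibrant objects, not acyclic), so nothing in the hypotheses controls $Gq$. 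The only way to force $q$ to be a trivial fibration would be to know that $j$ is a weak equivalence, and $j$ is a weak equivalence precisely when pulling back the weak equivalence $Y \to RY$ along the fibration $Rp$ yields a weak equivalence --- that is, when $\D$ is right proper, which is not assumed. So the gap you flag is real, and your proposed fix of ``further factorization'' does not repair it.

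The argument that actually works goes through the adjoint formulation. From preservation of acyclic fibrations by $G$ one gets, via adjunction and lifting, that $F$ preserves cofibrations; the remaining task is to show $F$ preserves trivial cofibrations. The key lemma is: a cofibration $f\colon A \to B$ is acyclic if and only if it has the left lifting property against all fibrations between fibrant objects. The nontrivial direction is proved by taking fibrant replacements $A \to \widehat{A}$, $B \to \widehat{B}$ fitting into a fibration $\widehat{p}\colon\widehat{A}\to\widehat{B}$ between fibrant objects, lifting $f$ against $\widehat{p}$ to get $h\colon B \to \widehat{A}$, and then observing that in the chain $A \xrightarrow{f} B \xrightarrow{h} \widehat{A} \xrightarrow{\widehat{p}} \widehat{B}$ both $hf$ and $\widehat{p}h$ are weak equivalences, whence $f$ is a weak equivalence by the two-out-of-six property (equivalently, because weak equivalences are exactly the maps inverted in the homotopy category). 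Given this lemma, if $i$ is a trivial cofibration in $\C$ then $Fi$ is a cofibration, and for any fibration $p$ between fibrant objects in $\D$ the lifting problem for $Fi$ against $p$ is adjoint to that of $i$ against $Gp$, which is solvable since $Gp$ is a fibration by hypothesis; hence $Fi$ is a trivial cofibration. This route avoids the pullback comparison map and the properness issue entirely, at the cost of invoking two-out-of-six.
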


\subsection{Algebras and modules} 
We next recall the theory of (commutative) monoids, (commutative) algebras and modules in symmetric monoidal model categories due to Schwede-Shipley~\cite{SchwedeShipley00} and White~\cite{White17}.

Recall that a model category is said to be \emph{symmetric monoidal} if it has a closed symmetric monoidal structure and satisfies the following two conditions:
\begin{enumerate}
	\item \emph{pushout-product axiom}: if $f \colon A \to B$ and $g\colon X \to Y$ are cofibrations, then the pushout-product map \[f \square g \colon A \otimes Y \bigcup_{A \otimes X} B \otimes X \to B \otimes Y\] is a cofibration, which is acyclic if either $f$ or $g$ is acyclic; 
	\item \emph{unit axiom}: for $c\mathds{1} \to \mathds{1}$ a cofibrant replacement of the unit, the natural map $c\mathds{1} \otimes X \to \mathds{1} \otimes X \cong X$ is a weak equivalence for all cofibrant $X$.
\end{enumerate} 

\begin{defn}
	Suppose that $F: \C \rightleftarrows \D : U$ is a Quillen adjunction between symmetric monoidal model categories. We say that $(F,U)$ is a \emph{weak symmetric monoidal Quillen adjunction} if the right adjoint $U$ is lax symmetric monoidal (which gives the left adjoint $F$ an oplax symmetric monoidal structure) and the following conditions hold:
	\begin{enumerate}
		\item for cofibrant $A$ and $B$ in $\C$, the oplax monoidal structure map $\phi\colon F(A \otimes B) \to FA \otimes FB$ is a weak equivalence in $\D$;
		\item for a cofibrant replacement $c\mathds{1}_\C$ of the unit in $\C$, the map $F(c\mathds{1}_\C) \to \mathds{1}_\D$ is a weak equivalence in $\D$.
	\end{enumerate}
	If the oplax monoidal structure maps are isomorphisms, then we say that $(F,U)$ is a \emph{strong symmetric monoidal Quillen adjunction}. We say that $(F,U)$ is a \emph{weak (resp. strong) symmetric monoidal Quillen equivalence} if $(F,U)$ is a weak (resp. strong) symmetric monoidal Quillen adjunction which is also a Quillen equivalence. Note that if $F$ is strong monoidal and the unit of $\C$ is cofibrant, then the Quillen pair $(F,U)$ is a strong symmetric monoidal Quillen pair.
\end{defn}

In this paper, we will be particularly interested in the interaction of model structures and Quillen functors with categories of modules and (commutative) algebras. Let $(\C,\otimes,\mathds{1})$ be a symmetric monoidal model category. For a monoid $S$ in $\C$, we denote the category of (left) $S$-modules by $\mod{S}(\C)$. If the underlying category is clear, we will instead write $\mod{S}$. 

The categories of modules and algebras often inherit a model structure from the underlying category in the following way.
Let $F:\C \rightleftarrows \D: U$ be an adjunction in which $\C$ is a model category and $\D$ is a bicomplete category. Kan's lifting theorem~\cite[11.3.2]{Hirschhorn03} provides conditions under which $\D$ inherits a model structure in which a map $f$ in $\D$ is a weak equivalence (resp. fibration) if and only if $Uf$ is a weak equivalence (resp. fibration) in $\C$. We call such a model structure \emph{right lifted}.

Under mild hypotheses, the categories of modules and (commutative) algebras obtain right lifted model structures. We refer the reader to~\cite[2.4]{SchwedeShipley00} for the precise smallness condition in the following theorem, and instead note that it is satisfied if $\C$ is locally presentable. Similarly, we refer the reader to~\cite[3.3]{SchwedeShipley00} and~\cite[3.1]{White17} for the definitions of the monoid axiom and commutative monoid axiom respectively.
\begin{thm}[{\cite[4.1]{SchwedeShipley00}, \cite[3.2]{White17}}]\label{thm:model structure on algebras}
	Let $\C$ be a cofibrantly generated, symmetric monoidal model category (with some smallness condition) and let $S$ be a commutative monoid in $\C$.
	\begin{enumerate}
		\item If $\C$ satisfies the monoid axiom then the categories of $S$-modules and $S$-algebras have right lifted model structures in which a map is a weak equivalence (resp. fibration) if and only if it is a weak equivalence (resp. fibration) in $\C$.
		\item If $\C$ satisfies the commutative monoid axiom and the monoid axiom, then the category of commutative $S$-algebras has a right lifted model structure in which a map is a weak equivalence (resp. fibration) if and only if it is a weak equivalence (resp. fibration) in $\C$.
	\end{enumerate}
\end{thm}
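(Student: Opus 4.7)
The plan is to apply Kan's lifting theorem (\cite[11.3.2]{Hirschhorn03} as cited in the excerpt) along the appropriate free-forgetful adjunction in each case. For $S$-modules the adjunction is $S\otimes-\colon\C\rightleftarrows\mod{S}\colon U$; for $S$-algebras it is the tensor-algebra adjunction $T_S\colon\mod{S}\rightleftarrows\mathrm{Alg}_S\colon U$; and for commutative $S$-algebras it is the symmetric-algebra adjunction $\mathrm{Sym}_S\colon\mod{S}\rightleftarrows\calg{S}\colon U$. In each case the target is bicomplete, and the smallness condition in the hypothesis ensures that the domains of the candidate generating (acyclic) cofibrations remain small after the left adjoint is applied. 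The single remaining ingredient is the following: writing $J$ for a set of generating acyclic cofibrations in $\C$ and $F$ for the relevant left adjoint, every transfinite composition of pushouts of maps in $F(J)$ must be sent by the forgetful functor to a weak equivalence in $\C$.

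For modules this is essentially by definition. A pushout in $\mod{S}$ along a map $S\otimes j\to N$ with $j\in J$ is computed on underlying objects as a pushout of $j\otimes UN$ in $\C$; hence the monoid axiom---precisely the statement that transfinite compositions of pushouts of $J\otimes\C$ are weak equivalences---immediately yields the required weak equivalences.

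For $S$-algebras the free extension $T_S(j)\colon T_S(M)\to T_S(N)$ does not split so cleanly, since a pushout over a free algebra is not a pushout in $\mod{S}$. The standard remedy is to filter the pushout $B\coprod_{T_S(M)}T_S(N)$ in $\mathrm{Alg}_S$ by a transfinite sequence whose successive stages are attached by iterated pushout-products of $j$ tensored with $B$; each attaching map lies in the class controlled by the monoid axiom, and the axiom's closure properties under pushout and transfinite composition then yield the conclusion. For commutative $S$-algebras one carries out the analogous filtration, but the symmetric group actions force the successive stages to involve the symmetrised pushout-products $j^{\square n}/\Sigma_n$ tensored with $B$, and it is precisely these that the commutative monoid axiom is designed to handle.

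The main obstacle is this filtration analysis of free extensions, especially in the commutative case: one must decompose the coproduct $B\coprod_{\mathrm{Sym}_S(M)}\mathrm{Sym}_S(N)$ as a sequential colimit whose attaching maps involve the quotiented pushout-powers $j^{\square n}/\Sigma_n$, and verify that these quotients indeed fall inside the class of maps controlled by the commutative monoid axiom. This combinatorial bookkeeping, carried out in~\cite{SchwedeShipley00} and~\cite{White17}, is the only genuinely nontrivial ingredient; once the decomposition is in place, Kan's lifting theorem delivers the model structures formally.
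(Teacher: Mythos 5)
The paper does not prove this theorem; it simply records the statement by citing Schwede--Shipley~\cite[4.1]{SchwedeShipley00} and White~\cite[3.2]{White17}. Your sketch is a faithful recapitulation of exactly the arguments that appear in those references: Kan's lifting theorem along the relevant free--forgetful adjunctions, reduced to showing that relative $F(J)$-cell complexes are underlying weak equivalences, via the monoid axiom for modules and associative algebras (with the Schwede--Shipley filtration of free extensions) and the commutative monoid axiom together with the $\Sigma_n$-quotiented pushout-power filtration for commutative algebras. So you have essentially given the approach the paper is invoking.

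One small wrinkle in the module case: a pushout of $S\otimes j\colon S\otimes A\to S\otimes B$ along some $S\otimes A\to N$ in $\mod{S}$ is, on underlying objects, a pushout of the map $j\otimes S$ (which lies in the class $J\otimes\C$ governed by the monoid axiom), not a ``pushout of $j\otimes UN$'' as you wrote; the object tensored with $j$ is $S$ itself, coming from the free functor, not the target $N$ of the attaching map. This does not affect the validity of the argument, but the phrasing as written misidentifies what the monoid axiom is being applied to.
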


We say that a symmetric monoidal model category $\C$ satisfies \emph{Quillen invariance of modules} if for any weak equivalence $\theta\colon S \to R$ of monoids in $\C$, the extension-restriction of scalars adjunction
\begin{center}
	\begin{tikzcd}
	\mod{S} \arrow[rr, yshift=1mm, "R \otimes_S -"] & & \mod{R} \arrow[ll, yshift=-1mm, "\res"]
	\end{tikzcd}
\end{center}
is a Quillen equivalence, see~\cite[4.3]{SchwedeShipley00}. Throughout we write $\ext = R \otimes_S -$ for the left adjoint of the restriction of scalars functor $\res$.

\subsection{Cofibrations of modules and (commutative) algebras}
In general there is not an explicit description of the cofibrations in a right lifted model structure, but in many situations they have desirable properties.
\begin{thm}[{\cite[4.1]{SchwedeShipley00}}] 
	Let $\C$ be a symmetric monoidal model category and let $S$ be a commutative monoid in $\C$. Every cofibration of $S$-algebras whose source is cofibrant as an $S$-module is also a cofibration of $S$-modules. In particular, if the unit of $\C$ is cofibrant, then every cofibrant $S$-algebra is a cofibrant $S$-module.
\end{thm}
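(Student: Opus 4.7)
The plan is to prove the main statement first and then deduce the ``in particular'' clause. For the latter: if $\mathds{1}$ is cofibrant in $\C$, then $S \cong S \otimes \mathds{1}$ is cofibrant in $\mod{S}$, since the free $S$-module functor $S \otimes -\colon \C \to \mod{S}$ is left Quillen. So for any cofibrant $S$-algebra $A$, the unit map $S \to A$ is a cofibration of $S$-algebras whose source is cofibrant as an $S$-module, and the main statement then forces $A$ to be cofibrant as an $S$-module.

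For the main statement, let $i \colon A \to B$ be a cofibration of $S$-algebras with $A$ cofibrant as an $S$-module. Cofibrations of $S$-algebras are retracts of transfinite compositions of pushouts of generating cofibrations, and each of these constructions preserves both the class of cofibrations of $S$-modules and the property of having cofibrant source (in the sense we need). I would therefore reduce to the case where $i$ fits into a pushout square
\begin{center}
\begin{tikzcd}
T_S(X) \arrow[r, "T_S(f)"] \arrow[d] & T_S(Y) \arrow[d] \\
A \arrow[r, "i"] & B
\end{tikzcd}
\end{center}
in $S$-algebras, where $f\colon X \to Y$ is a generating cofibration of $S$-modules and $T_S(-) = \bigoplus_{n \geq 0} (-)^{\otimes_S n}$ denotes the free $S$-algebra functor.

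The key step is to construct a sequential filtration $A = B_0 \to B_1 \to B_2 \to \cdots$ with $\colim_n B_n = B$ such that each $B_{n-1} \to B_n$ arises as a pushout in $\mod{S}$ along a map built from the $n$-fold iterated pushout-product of $f$ with itself, tensored with copies of $A$. Heuristically, $B_n$ is the $S$-submodule of $B$ spanned by ``words'' of length at most $n$ in $Y$, with factors of $A$ interspersed, and the pushout-product enters because attaching a new copy of $Y$ requires identifying the contribution from $X$ already present in $B_{n-1}$. Granted the existence of such a filtration, the pushout-product axiom applied to $f$, together with the $\mod{S}$-cofibrancy of $A$, implies that each $B_{n-1} \to B_n$ is a cofibration of $S$-modules; hence the transfinite composite $i$ is too.

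The main obstacle is the explicit construction of the filtration and the verification that $B_n/B_{n-1}$ has the claimed shape: this requires a careful combinatorial analysis of the free $S$-algebra on the pushout, respecting the multiplication. I would follow the Schwede--Shipley approach, exploiting the natural grading on the tensor algebra $T_S$ to define the filtration degree by degree, and using the universal property of the pushout of $S$-algebras at each stage to identify the associated graded with an iterated pushout-product of $f$ smashed with $A$-factors. Everything else (retracts, transfinite compositions, the cofibrancy of $S$ under the cofibrant-unit hypothesis) is then formal.
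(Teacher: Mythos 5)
The paper does not prove this statement itself; it quotes it from Schwede--Shipley \cite[4.1]{SchwedeShipley00}, whose proof (via the filtration of the pushout $A \to B$ in $S$-algebras by $S$-module cell attachments built from iterated pushout-products of the generating cofibration smashed with copies of $A$) is exactly the argument you outline. Your reduction to generating cofibrations, your deduction of the ``in particular'' clause from the cofibrancy of $S = S \otimes \mathds{1}$, and your identification of the filtration as the crux are all correct, and you explicitly acknowledge that the remaining combinatorial verification is the Schwede--Shipley construction, so there is no gap.
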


The case of commutative algebras is more subtle. White~\cite[3.5, 3.6]{White17} has given an answer to this question in general, but it requires stronger assumptions that just the existence of the model structure on commutative algebras. We recall some relevant examples.

\begin{eg}\label{ex:convenient}
	If $S$ a commutative DGA over a field of characteristic zero and $R$ is a cofibrant commutative $S$-algebra, then $R$ is cofibrant (i.e., dg-projective) as an $S$-module, see for instance~\cite[\S 5.1]{White17}. Note that it fails in non-zero characteristic since Maschke's theorem does not apply.
\end{eg}

\begin{eg}\label{ex:notconvenient}
	In categories of spectra the situation is more complicated. It is well known by Lewis' obstruction~\cite{Lewis91} that the stable model structure on (symmetric) spectra cannot be right lifted to a model structure on commutative algebra spectra as the sphere spectrum is cofibrant. Instead, one must consider the \emph{positive} stable model structure in which the sphere spectrum is not cofibrant. This model structure can be right lifted to give a model structure on commutative algebras, however, a cofibrant commutative algebra in the positive stable model structure on spectra need not be cofibrant as a module. Nonetheless there is a model structure on spectra called the flat model structure, for which this property is true, see Corollary~\ref{cor:flatcofibrants}.  
\end{eg}

\section{Symmetric spectra in general model categories}
In this section we recall the definition of the category of symmetric spectra in general model categories and its properties and stable model structure as in~\cite{Hovey01}; see also~\cite[\S 2]{RichterShipley17}.

Let $(\C,\otimes,\mathds{1})$ be a bicomplete, closed symmetric monoidal category and $K \in \C$. Let $\Sigma$ be the category whose objects are the finite sets $\underline{n} = \{1,\ldots ,n\}$ for $n \geq 0$ where $\underline{0} = \emptyset$, and whose morphisms are the bijections of $\underline{n}$. The category of symmetric sequences in $\C$ is the functor category $\C^\Sigma$. The category $\C^\Sigma$ inherits a closed symmetric monoidal structure from $\C$ via the Day convolution, with tensor product given by \[(A \odot B)(n) = \coprod_{p+q=n}\Sigma_n \times_{\Sigma_p \times \Sigma_q} A(p) \otimes B(q).\]

The category of \emph{symmetric spectra} $\text{Sp}^\Sigma(\C,K)$ is the category of modules over $\text{Sym}(K)$ in $\C^\Sigma$, where $\text{Sym}(K) = (\mathds{1}, K, K^{\otimes 2}, \cdots)$ is the free commutative monoid on $K$. Therefore, $\symsp(\C,K)$ inherits a closed symmetric monoidal structure with tensor product defined by the coequalizer \[X \wedge Y = \mathrm{coeq}\left(X \odot \mathrm{Sym}(K) \odot Y \rightrightarrows X \odot Y\right)\] of the actions of $\mathrm{Sym}(K)$ on $X$ and $Y$. More explicitly, an object $X$ of $\text{Sp}^\Sigma(\C,K)$ is a collection of $\Sigma_n$-objects $X(n) \in \C$ with $\Sigma_n$-equivariant maps \[K \otimes X(n) \to X(n+1)\] for all $n \geq 0$, such that the composite \[K^{\otimes m} \otimes X(n) \to X(n+m)\] is $\Sigma_m \times \Sigma_n$-equivariant for all $m,n \geq 0$. Note that taking $\C = \sset_*$ and $K=S^1$ recovers the usual notion of symmetric spectra as defined and studied by Hovey-Shipley-Smith~\cite{HoveyShipleySmith00}.

We now sketch the construction of the stable model structure on $\symsp(\C,K)$ due to Hovey~\cite{Hovey01}. If $\C$ is a left proper and cellular model category, one can equip $\symsp(\C,K)$ with a level model structure in which the weak equivalences and fibrations are levelwise weak equivalences and levelwise fibrations in $\C$ respectively~\cite[8.2]{Hovey01}. One can then left Bousfield localize this level model structure to obtain the stable model structure~\cite[8.7]{Hovey01}. We call the weak equivalences in this model structure the stable equivalences and the fibrations the stable fibrations.

There is also a positive stable model structure, which allows the construction of right lifted model structures on commutative algebras, see for instance~\cite[\S 14]{MandellMaySchwedeShipley01}. However, these model structures do not have the property that cofibrant commutative algebras are cofibrant modules. In order to rectify this, we turn to the flat model structure in the next section.

\begin{notation}
	We set notation for the categories of symmetric spectra of interest.
	\begin{itemize}
		\item We write $\symsp = \symsp(\sset_*,S^1)$ for the category of symmetric spectra in simplicial sets.
		\item We write $\symsp(\sq)$ for the category  $\symsp(\sq,\widetilde{\mathbb{Q}}S^1)$ where $\sq$ is the category of simplicial $\mathbb{Q}$-modules and $\widetilde{\mathbb{Q}}\colon \sset_* \to \sq$ is the functor which takes the levelwise free $\mathbb{Q}$-module on the non-basepoint simplices.
		\item We write $\symsp(\chq)$ for the category $\symsp(\chq, \mathbb{Q}[1])$ where $\chq$ is the category of non-negatively graded chain complexes of $\mathbb{Q}$-modules and $\mathbb{Q}[1]$ is the chain complex which contains a single copy of $\mathbb{Q}$ concentrated in degree 1.
	\end{itemize}
\end{notation}

\section{Flat model structures}\label{sec:flat}
In this section we show that the categories used in Shipley's algebraicization theorem support a flat model structure. Recall from Example~\ref{ex:notconvenient} that a cofibrant commutative algebra need not be a cofibrant module in the stable model structure on spectra. To rectify this, Shipley~\cite{Shipley04} constructs a flat (and a positive flat) model structure on symmetric spectra in simplicial sets in which this property holds. Pavlov-Scholbach~\cite{PavlovScholbach18} extended these flat model structures to symmetric spectra in general model categories. The flat model structure has the same weak equivalences as the stable model structure on spectra (i.e., the stable equivalences), but has more cofibrations. In particular, the identity functor from the stable model structure to the flat model structure is a left Quillen equivalence.

\subsection{Equivariant model structures}
The stable model structure on symmetric spectra disregards the actions of the symmetric groups on each level. Instead, the flat model structure proceeds by remembering this equivariance and building it into the model structure. There are two extreme cases: the naive case is where no equivariance is recorded and the genuine case is when all equivariance is recorded.  The flat model structure on $\text{Sp}^\Sigma(\C,K)$ (when it exists) is built from the blended model structure on $G$-objects in $\C$ which is intermediate between the naive and genuine structures. Note that some authors refer to this model structure as the mixed model structure, but we do not since it is not mixed in the sense of Cole mixing~\cite{Cole06}. 

From now on, we assume that $\C$ is a pretty small model category~\cite[2.1]{PavlovScholbach18b}. We note that this condition is satisfied for simplicial sets, simplicial $\mathbb{Q}$-modules and non-negatively graded chain complexes of $\mathbb{Q}$-modules. 

We now recall the conditions needed for the genuine and blended model structures to exist, see for instance~\cite{Stephan16}. Let $G$ be a finite group. We write $G\C$ for the category of $G$-objects in $\C$; that is, the functor category $[BG,\C]$ where $BG$ is the one-object category whose morphisms are elements of $G$.
\begin{defn}
	We say that $\C$ satisfies the \emph{weak cellularity conditions for $G$} if the following are true for all subgroups $H,K \leq G$:
	\begin{enumerate}
		\item $(-)^H$ preserves directed colimits of diagrams in $G\C$ where each underlying arrow in $\C$ is a cofibration,
		\item $(-)^H$ preserves pushouts of diagrams where one leg is of the form $G/K \otimes f$ for $f$ a cofibration in $\C$,
		\item $(G/K \otimes -)^H$ takes generating cofibrations to cofibrations and generating acyclic cofibrations to acyclic cofibrations.
	\end{enumerate}
	We say that it satisfies the \emph{strong cellularity conditions for $G$} if (1) and (2) from above hold, and for any $H,K \leq G$ and any $X \in \C$, \[(G/H \otimes X)^K \cong (G/H)^K \otimes X.\]
\end{defn}

\begin{defn}
	We say that a map $f\colon X \to Y$ in $G\C$ is:
	\begin{itemize}
		\item a \emph{naive weak equivalence} if the underlying morphism is a weak equivalence in $\C$;
		\item a \emph{naive fibration} if the underlying morphism is a fibration in $\C$;
		\item a \emph{naive cofibration} if it has the left lifting property with respect to the naive acyclic fibrations;
		\item a \emph{genuine weak equivalence} if for every subgroup $H$ of $G$, the map $f^H\colon X^H \to Y^H$ is a weak equivalence in $\C$;
		\item a \emph{genuine fibration} if for every subgroup $H$ of $G$, the map $f^H\colon X^H \to Y^H$ is a fibration in $\C$;
		\item a \emph{genuine cofibration} if it has the left lifting property with respect to all genuine acyclic fibrations.
		\item a \emph{blended fibration} if it has the right lifting property with respect to maps which are both naive weak equivalences and genuine cofibrations.
	\end{itemize}
\end{defn}

The cellularity conditions control when the genuine model structure on $G\C$ exists.
\begin{prop}\label{prop:genuinemodel}
	If the weak cellularity conditions hold for $\C$ then the genuine weak equivalences, genuine cofibrations and genuine fibrations give a cofibrantly generated, model structure on $G\C$ called the genuine model structure. Furthermore, if $\C$ is proper, then so is the genuine model structure on $G\C$, and if $\C$ is a monoidal model category with cofibrant unit, then so is the genuine model structure on $G\C$.
\end{prop}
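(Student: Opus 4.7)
My strategy is to apply Kan's recognition theorem for cofibrantly generated model structures on $G\C$, taking as generating (acyclic) cofibrations the sets
\[
I_G = \{G/H \otimes i : i \in I,\ H \leq G\}, \quad J_G = \{G/H \otimes j : j \in J,\ H \leq G\},
\]
where $I$ and $J$ are generating (acyclic) cofibrations for $\C$. Since $G/H \otimes -$ is left adjoint to the fixed-point functor $(-)^H$, lifting against the sets $I_G$ and $J_G$ corresponds precisely to the fixed-point characterization of genuine acyclic fibrations and fibrations respectively, so this is the right choice of generators.

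The critical input to the recognition theorem is that every relative $J_G$-cell complex is a genuine weak equivalence. Such a complex is built as a transfinite composition of pushouts of generators $G/K \otimes j$ with $j \in J$; applying $(-)^H$, condition (1) commutes fixed points with the transfinite composition, condition (2) commutes them with each individual pushout, and condition (3) identifies $(G/K \otimes j)^H$ as an acyclic cofibration in $\C$. Smallness of the domains of $I_G$ and $J_G$ is inherited from $\C$ via the left adjoint $G/H \otimes -$. Combined with the standard small object and retract arguments, this produces the model structure. Right properness is then automatic, since $(-)^H$ preserves pullbacks as a right adjoint and fibrations are detected fixed-pointwise, so the property reduces to right properness of $\C$; left properness reduces by a cellular induction over genuine cofibrations to pushouts along weak equivalences in $\C$, once again using conditions (1) and (2) to commute fixed points past the relevant colimits.

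For the monoidal structure under the cofibrancy of the unit of $\C$, the unit axiom is immediate because the monoidal unit of $G\C$ is $\mathds{1}$ equipped with the trivial $G$-action. The pushout-product axiom reduces by the standard cell induction to the case of two generators, where one has the isomorphism
\[
(G/H \otimes i) \square (G/K \otimes j) \cong (G/H \otimes G/K) \otimes (i \square j);
\]
decomposing $G/H \otimes G/K$ into orbits for the diagonal $G$-action and invoking condition (3) reduces the question to the pushout-product axiom in $\C$.

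The main obstacle will be the careful bookkeeping required to commute $(-)^H$ past the transfinite pushout presentation of a $J_G$-cell complex; this is precisely what the three weak cellularity conditions are designed to enable, but combining them cleanly requires an induction over the cell structure. The monoidal step carries a similar subtlety in controlling the orbit decomposition of $G/H \otimes G/K$, though it reduces to the same three conditions together with the pushout-product axiom in $\C$. Once these two analyses are in place, properness and the unit axiom are routine.
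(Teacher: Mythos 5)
Your plan matches the paper's proof in all essentials: the paper cites Stephan for the existence of the genuine model structure, which is itself a Kan-recognition argument with exactly the generating sets $\{G/H\otimes i\}$ and $\{G/H\otimes j\}$ you propose; left properness is handled by the same cellular induction commuting $(-)^K$ past pushouts via cellularity conditions (2)--(3); right properness is the same one-line observation that $(-)^H$ preserves pullbacks; the pushout-product axiom is reduced via the same double-coset decomposition of $G/H\otimes G/K$; and the unit axiom is the same trivial-action/cofibrant-unit observation. One small imprecision worth flagging: after the double-coset decomposition the remaining step does not invoke cellularity condition (3) (which concerns $H$-fixed points of $G/K\otimes i$ and is what makes the model structure exist); rather it uses only that $G/L\otimes -$ carries cofibrations of $\C$ to genuine cofibrations, which follows from the description of the generating genuine cofibrations together with the fact that $G/L\otimes -$ preserves colimits and retracts. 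With that bookkeeping corrected, your argument is the paper's argument.
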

\begin{proof}
	The claim that the genuine model structure exists and is cofibrantly generated is due to Stephan~\cite[2.6]{Stephan16}. The generating cofibrations and generating acyclic cofibrations are given by $\cup_{H \leq G}\{G/H \otimes i \mid i \in I\}$ and $\cup_{H \leq G}\{G/H \otimes j \mid j \in J\}$ respectively, where $I$ and $J$ are the sets of generating cofibrations and acyclic cofibrations for $\C$ respectively.
	
	We now prove that the genuine model structure is left proper. It suffices to prove that in a diagram of pushouts of the form 
	\begin{center}
		\begin{tikzcd}
		G/H \otimes A \arrow[d, hookrightarrow] \arrow[r] \arrow[rd, phantom, very near end, "\pushout"] & C \arrow[r, "\sim"] \arrow[d] \arrow[rd, phantom, very near end, "\pushout"] & X \arrow[d] \\
		G/H \otimes B \arrow[r] & D \arrow[r] & Y 
		\end{tikzcd}
	\end{center}
	where $A \to B$ is a generating cofibration for $\C$, the map $D \to Y$ is a genuine weak equivalence. This is because as $\C$ is pretty small, weak equivalences are closed under transfinite composition~\cite[2.2]{PavlovScholbach18b}, and therefore the class of maps for which pushing out along them preserves weak equivalences is closed under retracts, pushouts and transfinite compositions. By the second cellularity condition, after taking $K$ fixed points, the left hand square and the outer rectangle are still pushouts. It follows that the right hand square is also still a pushout. 
	
	By the third cellularity condition, the left most vertical map is still a cofibration after taking $K$ fixed points. 
	Since cofibrations are stable under pushout, the map $C^K \to D^K$ is a cofibration, and since $\C$ is left proper, we have that $D^K \to Y^K$ is a weak equivalence for all $K$. Hence the genuine model structure is left proper.
	The fact that the model structure is right proper follows immediately from the fact that fixed points determine fibrations and weak equivalences.
	
	We now prove that the genuine model structure is monoidal. Firstly we must show that the pushout-product of two genuine cofibrations is a genuine cofibration. We use the description of the generating cofibrations $\cup_{H \leq G}\{G/H \otimes i \mid i \in I\}$ for the genuine model structure where $I$ is the set of generating cofibrations for $\C$. Take generating cofibrations $$G/H \otimes i\colon G/H \otimes A \to G/H \otimes B \quad \text{and} \quad G/K \otimes i'\colon G/K \otimes X \to G/K \otimes Y$$ for the genuine model structure. Since $G/H \otimes -$ and $G/K \otimes -$ are left adjoints, the pushout product map $(G/H \otimes i) \square (G/K \otimes i')$ can be identified with the map $(G/H \otimes G/K) \otimes (i \square i')$, which in turn can be identified with $$\coprod_{x \in [H\backslash G/K]}G/(H \cap xKx^{-1}) \otimes (i \square i')$$ by the double coset formula. Since $\C$ is monoidal, $i \square i'$ is a cofibration in $\C$ and hence the pushout product map $(G/H \otimes i) \square (G/K \otimes i')$ is a genuine cofibration as required. It follows by a similar argument that the pushout product of a genuine cofibration with a genuine acyclic cofibration is a genuine acyclic cofibration.  
	
	For the unit axiom, note that the monoidal unit in $G\C$ is the unit of $\C$ equipped with the trivial $G$-action. The functor which equips an object with the trivial $G$-action is left adjoint to the $G$-fixed points functor, and hence is left Quillen. It then follows that since the unit of $\C$ is cofibrant, the unit in $G\C$ is genuine cofibrant.
\end{proof}

We can then localize the genuine model structure to give the blended model structure. 
\begin{thm}\label{thm:blended}
	Let $\C$ be a simplicial, proper model category which satisfies the weak cellularity conditions. Then the naive weak equivalences, genuine cofibrations and blended fibrations give a proper, cofibrantly generated model structure on $G\C$ which we call the blended model structure.
\end{thm}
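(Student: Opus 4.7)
The plan is to construct the blended model structure as a left Bousfield localization of the genuine model structure of Proposition~\ref{prop:genuinemodel}. Since that structure is cofibrantly generated and left proper, and is cellular (respectively combinatorial) whenever $\C$ is, left Bousfield localizations at any set of maps exist by~\cite[4.1.1]{Hirschhorn03} or~\cite[4.7]{Barwick10}. Using the simplicial enrichment of $\C$, pick a cofibrant contractible free $G$-simplicial set $EG$ (for example, the standard bar construction), and localize the genuine model structure on $G\C$ at the set
\[
S = \{EG \otimes G/H \otimes \mathds{1} \to G/H \otimes \mathds{1} \mid H \leq G\}
\]
of maps induced by the canonical collapse $EG \to \ast$.

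The first and main step is to prove that the $S$-local equivalences coincide with the naive weak equivalences. For this, one verifies that a genuine-fibrant $W$ is $S$-local precisely when, for every $H \leq G$, the comparison
\[
\map(G/H \otimes \mathds{1}, W) \longrightarrow \map(EG \otimes G/H \otimes \mathds{1}, W)
\]
is a weak equivalence of simplicial sets. The left hand side computes the $H$-fixed points of $W$, while the right hand side, because $EG$ is free and contractible, computes a homotopy-theoretic mapping space into $W$ depending only on the underlying object of $W$ in $\C$. A standard mapping space argument then shows that $f$ is an $S$-local equivalence if and only if its underlying map in $\C$ is a weak equivalence, i.e.\ it is a naive weak equivalence.

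Granted this identification, the remaining checks are essentially formal. Cofibrations in a left Bousfield localization are unchanged, so they are exactly the genuine cofibrations. The fibrations are by definition the $S$-local fibrations, and applying Proposition~\ref{prop:fibrationsbetweenlocals} together with the characterization of acyclic cofibrations in the localized structure as naive weak equivalences that are genuine cofibrations shows that these agree with the blended fibrations. Left properness is inherited from the left proper genuine model structure, since the cofibrations agree and the naive weak equivalences are detected in the left proper model category $\C$. For right properness, observe that every blended fibration is in particular a genuine fibration, so its underlying map in $\C$ is a fibration; since pullbacks in $G\C$ are computed underlying-wise, right properness of $\C$ transfers to the blended structure.

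The main obstacle is the identification of $S$-local equivalences with naive weak equivalences. This hinges on a careful interaction between the simplicial tensor with $EG$ and the $H$-fixed point functors for the various subgroups $H \leq G$, and is where the simplicial and cellularity hypotheses are used in an essential way. Once this step is in place, everything else is standard left Bousfield localization bookkeeping.
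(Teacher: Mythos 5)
Your proposal takes a genuinely different route from the paper: you propose a Hirschhorn/Barwick-style left Bousfield localization of the genuine model structure at an explicit \emph{set} $S$ of maps, whereas the paper applies Bousfield--Friedlander localization at the \emph{functor} $QX = \map(EG,\widehat{f}X)$ (genuine fibrant replacement followed by simplicial cotensor with $EG$), verifying axioms (A1)--(A3) of~\cite[9.3]{Bousfield01}. Your approach is reasonable in spirit, but as written it has a real gap precisely at the point you yourself flag as the ``main obstacle'': the identification of $S$-local equivalences with naive weak equivalences is only asserted, not proved, and I do not believe it holds with your particular choice of $S$.

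The difficulty is that your set $S$ only tests against $G/H\otimes\mathds{1}$. The $S$-locality condition on a genuine-fibrant $W$ then amounts to requiring that the simplicial sets $\map^h(\mathds{1},W^H) \to \map^h(\mathds{1},\map(EG,W)^H)$ be weak equivalences. In a general simplicial model category, $\map^h(\mathds{1},-)$ need not detect weak equivalences in $\C$, so this is a strictly weaker condition than requiring $W^H \to \map(EG,W)^H$ to be a weak equivalence in $\C$. Consequently there could be more $S$-local objects than intended and hence \emph{fewer} $S$-local equivalences than naive weak equivalences; you would only recover those maps whose underlying map is a weak equivalence as detected by $\map^h(\mathds{1},-)$. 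To repair this you would need to enlarge $S$ to run over a family of objects $A$ that jointly detect weak equivalences in $\C$ (e.g.\ the domains and codomains of the generating cofibrations), and then genuinely carry out the mapping-space argument in both directions. There is also a mild mismatch of hypotheses: the left Bousfield localization existence theorems require $G\C$ with the genuine model structure to be cellular or combinatorial, while the theorem as stated only assumes $\C$ is simplicial, proper and satisfies the weak cellularity conditions (the paper's pretty-smallness assumption from earlier in the section is what would supply this, but you should invoke it explicitly). The advantage of the Bousfield--Friedlander route is that it avoids both problems: the functor $Q$ is designed so that $Q$-equivalences are naive weak equivalences essentially by construction, one checks the three axioms directly, and no cellularity/combinatoriality of $G\C$ is needed. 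Your bookkeeping for the cofibrations, the characterization of fibrations via Proposition~\ref{prop:fibrationsbetweenlocals}, and both halves of properness is correct once the identification of weak equivalences is in place.
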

\begin{proof}
	We apply Bousfield-Friedlander localization~\cite[9.3]{Bousfield01} to the genuine model structure on $G\C$, with $QX = \map (EG, \widehat{f}X)$ where $\widehat{f}$ is a genuine fibrant replacement functor and $\map$ denotes the simplicial cotensor. We must verify that the conditions (A1), (A2) and (A3) from~\cite[9.3]{Bousfield01} are satisfied. Note that a map $f\colon X \to Y$ in $G\C$ is a naive weak equivalence if and only if $Qf\colon QX \to QY$ is a genuine weak equivalence. To see this, if $f\colon X \to Y$ is a naive weak equivalence, then $\map(G,f)$ is a genuine weak equivalence. Therefore, $\map(Z,f)$ is a genuine weak equivalence if $Z$ is built from free cells. Conversely, since $EG \to *$ is a naive weak equivalence, if $\map(EG,f)$ is a genuine weak equivalence then $f$ is a naive weak equivalence. The conditions (A1) and (A2) follow from this observation. Since $Q$ preserves fibrations and pullbacks, condition (A3) follows from the right properness of the genuine model structure on $G\C$.
\end{proof}

Note that~\cite[9.3]{Bousfield01} also gives an explicit description of the blended fibrations as those maps $X \to Y$ which are genuine fibrations and have the property that
\begin{equation}\label{homotopy pullback}
\begin{tikzcd}
X \arrow[r] \arrow[d] & \map(EG,\widehat{f}X) \arrow[d] \\
Y \arrow[r] & \map(EG, \widehat{f}Y) 
\end{tikzcd}\tag{$\star$}
\end{equation}
is a homotopy pullback square. The genuine fibrant replacement ensures that this is equivalent to the square being a homotopy pullback after taking $H$-fixed points for all $H \leq G$. 

\begin{prop}
	The blended model structure exists on $G\C$ for $\C =\sset_*$, $\sq$ and $\chq$. 
\end{prop}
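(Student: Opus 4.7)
The plan is to apply Theorem~\ref{thm:blended} in each case, so the task reduces to verifying, for each $\mathcal{C} \in \{\sset_*, \sq, \chq\}$ and each finite group $G$, that $\mathcal{C}$ is a proper, simplicial model category satisfying the weak cellularity conditions. The simplicial structure and properness are standard in each case: $\sset_*$ is the prototypical proper simplicial model category, while $\sq$ and $\chq$ are classically proper and admit compatible simplicial enrichments (for $\chq$, transferred from $\sq$ along Dold--Kan).

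For $\sq$ and $\chq$ the weak cellularity conditions are essentially formal consequences of working in characteristic zero. For any subgroup $H \leq G$ the averaging idempotent $\tfrac{1}{|H|}\sum_{h \in H} h \in \mathbb{Q}[H]$ splits $(-)^H$ as a direct summand of the identity functor on $G$-equivariant $\mathbb{Q}$-modules; hence $(-)^H$ is exact and preserves all limits and colimits, so conditions (1) and (2) are immediate. For condition (3), the (co)domain $X$ of any generating (acyclic) cofibration carries the trivial $G$-action, so one has a natural isomorphism $(G/K \otimes X)^H \cong (G/K)^H \otimes X$. Since $(G/K)^H$ is a finite set, the right-hand side is a finite coproduct of copies of $X$, which is a (acyclic) cofibration because generating (acyclic) cofibrations are closed under coproducts.

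For $\sset_*$ the fixed-points functor is not exact, but all three conditions still hold. Condition (1) follows from the classical fact that set-level $H$-fixed points commute with filtered colimits of monomorphisms, applied degreewise; condition (2) is verified by unwinding the orbit structure of $G/K \otimes B \cong (G/K \times B)_+$, which allows the pushout along $G/K \otimes f$ to be decomposed, after applying $(-)^H$, as a coproduct of pushouts of copies of the underlying cofibration $f \colon A \to B$ indexed by $(H,K)$-double cosets in $G$; and condition (3) follows from the same orbit analysis, yielding $(G/K \otimes X)^H \cong (G/K)^H_+ \wedge X$ when $X$ carries the trivial $G$-action. The main obstacle is condition (2) for $\sset_*$, which requires a careful but routine unwinding of equivariant cell attachments; the cases of $\sq$ and $\chq$ reduce entirely to the semisimplicity of $\mathbb{Q}[G]$.
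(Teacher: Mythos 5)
Your overall strategy---reducing to Theorem~\ref{thm:blended} and then verifying the cellularity conditions directly---is a genuinely different route from the paper, which disposes of the cellularity conditions simply by citing Stephan (Propositions 2.14 and 2.19). Your direct route is more self-contained, and the idea of using semisimplicity of $\mathbb{Q}[G]$ to make $(-)^H$ a retract of the forgetful functor, hence colimit-preserving, is exactly right for $\sq$ and $\chq$; this does dispose of conditions (1) and (2) immediately.

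However, the isomorphism you invoke for condition (3) in the $\sq$ and $\chq$ cases is false. For $X$ with trivial $G$-action, the $H$-fixed subspace of the free $\mathbb{Q}$-module on $G/K$ is spanned by \emph{orbit sums}, not by singleton orbits, so the correct identification is $(G/K \otimes X)^H \cong \bigl((G/K)/H\bigr) \otimes X$, not $(G/K)^H \otimes X$. (Take $K = \{e\}$ and $H = G$ nontrivial: then $(G/K)^H = \emptyset$ while $(\mathbb{Q}[G])^G$ is one-dimensional, spanned by the norm element.) The fix is cosmetic---the $H$-orbit set $(G/K)/H$ is still finite, so $(G/K \otimes i)^H$ is a finite coproduct of copies of $i$ and hence still a (acyclic) cofibration---but you should get the indexing set right, not least because this is exactly the distinction between the strong and weak cellularity conditions in the paper's definition: by your argument $\sq$ and $\chq$ satisfy only the weak ones, which is all that Theorem~\ref{thm:blended} needs. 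Note that the strong isomorphism \emph{does} hold for $\sset_*$, since a non-basepoint simplex of $(G/K)_+ \wedge X$ lives over a single coset; your $\sset_*$ case is left as a sketch, and filling in condition (2) in full essentially reconstructs Stephan's argument, which the paper simply cites.
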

\begin{proof}
	The categories of based simplicial sets and simplicial $\mathbb{Q}$-modules satisfy the strong cellularity conditions by~\cite[2.14]{Stephan16}. The category of non-negatively graded rational chain complexes satisfies the weak cellularity conditions by~\cite[2.19]{Stephan16}. Therefore the result follows from Theorem~\ref{thm:blended}.
\end{proof}

Finally we note that in these cases, the blended model structure can be identified with the injective model structure in which the weak equivalences and cofibrations are both underlying.
\begin{prop}\label{prop:injective}\leavevmode
\begin{itemize}
	\item[(i)] A map $f$ in $G\text{-}\sset_*$ is an underlying cofibration if and only if it is a genuine cofibration.
	\item[(ii)] For $\C = \sq$ and $\chq$, a map $f$ in $G\C$ is an underlying cofibration if and only if it is a naive cofibration if and only if it is a genuine cofibration.
\end{itemize}
\end{prop}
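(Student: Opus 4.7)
The plan is to prove each implication separately, using a cellular argument for part (i) and a combination of a cellular argument and an averaging trick for part (ii). The main obstacle is the underlying-cofibration-implies-naive-cofibration direction of part (ii), which requires an averaging argument that is only available because $|G|$ is invertible in $\mathbb{Q}$; this failure mode is precisely why part (i) must be handled by an explicit cellular decomposition instead.

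For part (i), I would establish both directions directly. The direction genuine cofibration $\Rightarrow$ underlying cofibration follows from the explicit description of the generating genuine cofibrations in Proposition~\ref{prop:genuinemodel}: each generator $G/H_+ \wedge (\partial\Delta^n_+ \hookrightarrow \Delta^n_+)$ is an underlying monomorphism, and the class of monomorphisms of simplicial sets is closed under pushouts, transfinite compositions, and retracts since the forgetful functor $G\sset_* \to \sset_*$ preserves colimits. For the converse, I would construct an explicit $G$-cellular decomposition. Given a $G$-equivariant monomorphism $f\colon A \hookrightarrow B$, filter $B$ over $A$ by its $n$-skeleta. The non-degenerate $n$-simplices of $B$ not in $A$ form a $G$-set, which decomposes canonically as a coproduct of orbits $\coprod_i G/H_i$. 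Attaching these orbits exhibits each skeletal step as a pushout of generating genuine cofibrations, so $f$ is a relative $G$-cell complex and hence a genuine cofibration.

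For part (ii), with $\C \in \{\sq, \chq\}$, I would close the chain underlying cofibration $\Rightarrow$ naive cofibration $\Rightarrow$ genuine cofibration $\Rightarrow$ underlying cofibration. The implication genuine cofibration $\Rightarrow$ underlying cofibration is analogous to part (i): the generating genuine cofibrations have the form $\mathbb{Q}[G/H] \otimes i$ for $i$ a generating cofibration in $\C$, each of which is an underlying cofibration since $\mathbb{Q}[G/H]$ is flat over $\mathbb{Q}$; underlying cofibrations are then closed under pushouts, transfinite compositions and retracts. The implication naive cofibration $\Rightarrow$ genuine cofibration is formal: every genuine acyclic fibration is in particular an underlying acyclic fibration, hence a naive acyclic fibration, so LLP against naive acyclic fibrations immediately implies LLP against the smaller class of genuine acyclic fibrations.

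The substantive direction of part (ii) is underlying cofibration $\Rightarrow$ naive cofibration, which I would establish by an averaging argument in the spirit of Maschke's theorem. Given a commutative square
\begin{center}
\begin{tikzcd}
A \arrow[r, "\alpha"] \arrow[d, "f"'] & X \arrow[d, "p"] \\
B \arrow[r, "\beta"'] & Y
\end{tikzcd}
\end{center}
in $G\C$ with $f$ an underlying cofibration and $p$ a naive acyclic fibration, the underlying map $f$ admits a non-equivariant lift $s\colon B \to X$ in $\C$. I would then define the averaged map $\bar{s} = \tfrac{1}{|G|} \sum_{g \in G} g \cdot s \cdot g^{-1}$, which is well-defined because $\C$ is $\mathbb{Q}$-linear and $|G|$ is invertible in $\mathbb{Q}$. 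A direct calculation using the $G$-equivariance of $\alpha, \beta, f, p$ shows that $\bar{s}$ is $G$-equivariant and satisfies $\bar{s} f = \alpha$ and $p \bar{s} = \beta$, providing the required equivariant lift and completing the proof.
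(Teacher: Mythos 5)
Your proof is correct and takes essentially the same approach as the paper: part (ii) is established by the identical chain of implications (genuine $\Rightarrow$ underlying via the generating cofibrations, naive $\Rightarrow$ genuine by comparing lifting classes, and underlying $\Rightarrow$ naive by the Maschke-style averaging $\bar{s}(y) = \tfrac{1}{|G|}\sum_{g}g\cdot s(g^{-1}y)$). The only difference is that the paper dispatches part (i) by citing it as well known (to Shipley and Stephan), whereas you supply the standard relative $G$-cell filtration by skeleta and non-degenerate orbits, which is precisely the content of those references.
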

\begin{proof}
	Part (i) is well known; see for example~\cite[1.2]{Shipley04} or~\cite[2.16]{Stephan16}.
	
	For part (ii), let $\C = \chq$ or $\sq$ and note that we can give the same style of proof since they are both $\mathbb{Q}$-additive. From the description of the generating cofibrations of the genuine model structure given in the proof of Proposition~\ref{prop:genuinemodel}, it is clear that any genuine cofibration is an underlying cofibration. Since any naive cofibration is also a genuine cofibration it follows that any naive cofibration is an underlying cofibration. 
	
	We now turn to proving the forward implication. Since any naive cofibration is a genuine cofibration, it suffices to show that if $f$ is an underlying cofibration then it is a naive cofibration.
Let $f\colon X \to Y$ be an underlying cofibration in $G\C$. In order to prove that $f$ is a naive cofibration we must show that it has the left lifting property with respect to the naive acyclic fibrations. Consider a commutative square
	\[\begin{tikzcd}
	X \arrow[r, "\alpha"] \arrow[d, "f"'] & A \arrow[d, "h"] \\
	Y \arrow[r, "\beta"'] & B 
	\end{tikzcd} \]
in $G\C$, in which $h$ is a naive acyclic fibration. Since $f$ is an underlying cofibration, there is a lift $\theta\colon Y \to A$ making the diagram commute, but this need not be an equivariant map. Define $\phi\colon Y \to A$ by $$\phi(y) = \frac{1}{|G|}\sum_{g \in G}g\theta(g^{-1}y).$$ This is an equivariant map, so it remains to check that it is indeed a lift. 

Since $f$ and $\alpha$ are equivariant maps, $$\phi(f(x)) = \frac{1}{|G|}\sum_{g \in G}g\theta(f(g^{-1}x)) = \frac{1}{|G|}\sum_{g \in G}g\alpha (g^{-1}x) = \alpha(x).$$ In a similar way, one can show that $h\phi = \beta$. Therefore $\phi$ is a lift, and the map $f$ is a naive cofibration and hence also a genuine cofibration.
\end{proof}

\begin{cor}\label{cor:naive and blended}
For $\C = \sq$ and $\chq$, the blended model structure, injective model structure and the naive model structure on $G\C$ are the same.
\end{cor}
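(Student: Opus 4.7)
The plan is to observe that all three model structures share the same cofibrations and the same weak equivalences, and then invoke the fact that a model structure is determined by these two classes.

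First, I would unpack the three model structures under consideration. The blended model structure on $G\C$, constructed in Theorem~\ref{thm:blended}, has naive weak equivalences and genuine cofibrations. The naive model structure on $G\C$ has naive weak equivalences and naive cofibrations (with naive fibrations). The injective model structure, by definition, has underlying weak equivalences and underlying cofibrations. In all three cases the class of weak equivalences is the same: the naive weak equivalences are, by definition, precisely the underlying weak equivalences in $\C$.

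Next, I would invoke Proposition~\ref{prop:injective}(ii), which is the crucial input: for $\C = \sq$ and $\chq$, the classes of underlying cofibrations, naive cofibrations, and genuine cofibrations in $G\C$ all coincide. Combined with the previous paragraph, this shows that the blended, naive, and injective model structures on $G\C$ (whenever they exist) have exactly the same cofibrations and exactly the same weak equivalences.

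Since a model structure is uniquely determined by its classes of cofibrations and weak equivalences (the fibrations being characterised as the maps with the right lifting property with respect to the acyclic cofibrations), it follows that if any one of these model structures exists, they all exist and agree. The existence of the blended model structure is provided by the previous proposition in the cases $\C=\sq,\chq$, so the naive and injective model structures exist and coincide with it. No step here is especially delicate since the heavy lifting has been done in Proposition~\ref{prop:injective}(ii); the only thing to be careful about is to state cleanly that the fibrations are forced to agree once the cofibrations and weak equivalences do.
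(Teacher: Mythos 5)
Your argument is essentially identical to the paper's: both observe that the weak equivalences agree (all naive) and that the cofibrations agree by Proposition~\ref{prop:injective}(ii), and both conclude by the standard fact that a model structure is determined by its cofibrations and weak equivalences. Your version merely spells out the determinacy step and the existence point a bit more explicitly, which is fine.
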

\begin{proof}
The weak equivalences in all three model structures are the naive weak equivalences. The cofibrations in each coincide by Proposition~\ref{prop:injective}.
\end{proof}

We emphasize that in the case of $\C = \sset_*$, the blended model structure is the same as the injective model structure on $G\C$, but is \emph{not} the same as the naive model structure.

\begin{cor}
For $\C = \sset_*$, $\sq$ and $\chq$, the blended model structure on $G\C$ is monoidal.
\end{cor}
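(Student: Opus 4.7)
The plan is to reduce the monoidal model structure axioms for the blended model structure on $G\C$ to the monoidality of $\C$ itself, using the observation that in these three cases both cofibrations and weak equivalences of the blended model structure are detected on underlying objects.

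First, I would combine two identifications. By Proposition~\ref{prop:injective}, in each of the three cases $\C = \sset_*, \sq, \chq$ the cofibrations of the blended model structure (which by construction coincide with the genuine cofibrations) are exactly the underlying cofibrations. By Theorem~\ref{thm:blended}, the weak equivalences of the blended model structure are by definition the naive weak equivalences, i.e., those maps whose underlying map is a weak equivalence in $\C$. Consequently, a map in $G\C$ is a blended (acyclic) cofibration precisely when its underlying map is an (acyclic) cofibration in $\C$.

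For the pushout-product axiom, I would use that the forgetful functor $G\C \to \C$ is strong symmetric monoidal and preserves all colimits: since $G\C = [BG, \C]$, the tensor product on $G\C$ is computed by the tensor product in $\C$ equipped with the diagonal $G$-action. Thus for maps $f, g$ in $G\C$, the underlying map of $f \square g$ coincides with the pushout-product of the underlying maps in $\C$. Combined with the identification in the previous step, the pushout-product axiom for $\C$ immediately transfers to the blended model structure on $G\C$.

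For the unit axiom, the monoidal unit of $G\C$ is the unit $\mathds{1}$ of $\C$ with trivial $G$-action, which in each of the three cases is cofibrant in $\C$ and hence, by the identification above, is blended cofibrant; the unit axiom is then automatic since we may take $c\mathds{1} = \mathds{1}$. There is no real obstacle in this argument beyond assembling Proposition~\ref{prop:injective}, the definition of the blended weak equivalences, and the fact that the forgetful $G\C \to \C$ is strong monoidal and cocontinuous.
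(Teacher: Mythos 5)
Your proof is correct and takes essentially the same route as the paper: both exploit Proposition~\ref{prop:injective} to see that the blended model structure coincides with the injective one, so cofibrations, weak equivalences, and pushout-products are all detected by the strong monoidal, cocontinuous forgetful functor $G\C \to \C$, and the pushout-product axiom transfers immediately. The only (minor) divergence is on the unit axiom: the paper invokes that the trivial-action functor, being left adjoint to $G$-fixed points, is left Quillen against the genuine structure, whereas you note more directly that since $\mathds{1}_\C$ is cofibrant and blended cofibrancy is detected on underlying objects, $\mathds{1}_{G\C}$ is blended cofibrant; both arguments are valid and deliver the same conclusion.
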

\begin{proof}
Note that in each case, $\C$ is monoidal and has cofibrant unit. Since the blended model structure is the same as the injective model structure by Proposition~\ref{prop:injective}, it is immediate that the pushout-product axiom holds. The unit axiom holds by the same argument as in Proposition~\ref{prop:genuinemodel}. 
\end{proof}

\subsection{The flat model structure}
We can equip $\symsp$, $\symsp(\sq)$ and $\symsp(\chq)$ with the \emph{level flat model structure}, in which the weak equivalences (resp. fibrations) are the levelwise naive weak equivalences (resp. levelwise blended fibrations)~\cite[3.1.3]{PavlovScholbach18}. The cofibrations in the level flat model structure are the \emph{flat cofibrations}; that is, the maps which have the left lifting property with respect to maps which are both levelwise naive weak equivalences and levelwise blended fibrations. In a similar manner, $\symsp$, $\symsp(\sq)$ and $\symsp(\chq)$ can be given the \emph{positive level flat model structure} in which the weak equivalences (resp. fibrations) are the maps which are naive weak equivalences (resp. blended fibrations) for each level $n > 0$.

A left Bousfield localization of the level flat model structure yields the \emph{flat model structure}. The weak equivalences in the flat model structure are the stable equivalences, and the cofibrations are the flat cofibrations. We call the fibrations in the flat model structure the \emph{flat fibrations}. Similarly, a left Bousfield localization of the positive level flat model structure gives the \emph{positive flat model structure} in which the weak equivalences are also the stable equivalences.
\begin{thm}
	The flat and positive flat model structures on $\symsp$, $\symsp(\sq)$ and $\symsp(\chq)$ (and on modules over monoids in these categories) exist. Furthermore, they satisfy Quillen invariance of modules, and are stable, left proper, symmetric monoidal and combinatorial model structures. 
\end{thm}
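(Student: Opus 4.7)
The plan is to deduce the theorem from the general framework of Pavlov-Scholbach~\cite{PavlovScholbach18}, whose hypotheses are largely set up in the preceding subsection. Concretely, for each $\C \in \{\sset_*, \sq, \chq\}$ the category is pretty small, combinatorial, proper, and symmetric monoidal with cofibrant unit, and Theorem~\ref{thm:blended} together with Corollary~\ref{cor:naive and blended} provides the blended monoidal model structure on $G\C$. With these inputs in hand, \cite[3.1.3]{PavlovScholbach18} delivers the level flat and positive level flat model structures on $\symsp(\C,K)$, in which each level carries the blended structure.

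Next I would obtain the (positive) flat model structure as the left Bousfield localization of the (positive) level flat structure at the standard stabilizing maps. Existence of the localization uses combinatoriality and left properness of the level structure, both of which descend from the corresponding properties of the blended structures on the $\Sigma_n\C$. Symmetric monoidality passes through the localization because the stabilizing maps form a monoidally compatible set; this relies on the pushout-product axiom for the blended structure, which we already established. Stability is immediate from the construction as a stabilization, and combinatoriality is inherited levelwise.

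To extend these structures to modules over a monoid $R$, I would apply Theorem~\ref{thm:model structure on algebras}, whose main hypothesis is the monoid axiom. This is verified by Pavlov-Scholbach using the fact that flat cofibrations are levelwise cofibrations in the blended structure, so that smashing with an arbitrary symmetric spectrum preserves the classes that generate the acyclic flat cofibrations up to transfinite composition and pushout. The same flatness underwrites Quillen invariance of modules: a weak equivalence $\theta\colon S \to R$ of monoids induces a Quillen equivalence between module categories because tensoring with a flat cofibrant replacement preserves stable equivalences.

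The step I expect to be the main obstacle is the monoid axiom, as it is the place where the compatibility of the flat structure with the tensor product must be checked quantitatively. For $\symsp$ this was the heart of Shipley's original construction~\cite{Shipley04}; the rational variants $\symsp(\sq)$ and $\symsp(\chq)$ reduce to the same sort of argument once the blended model structure has been identified, as in Proposition~\ref{prop:injective} and Corollary~\ref{cor:naive and blended}, with a more tractable structure (the injective, respectively naive, model structure). Granted this, the remaining properties are formal consequences of the construction.
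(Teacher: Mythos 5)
Your proposal follows essentially the same route as the paper: both reduce to the Pavlov--Scholbach framework, using the identification of the blended model structure with the injective model structure (Proposition~\ref{prop:injective}) as the key input, and then cite the relevant results of~\cite{PavlovScholbach18} for existence of the (positive) flat structure and its monoidality, stability, properness, combinatoriality, and Quillen invariance. The paper's proof is terser: it names the specific property -- \emph{strong admissibility} of the injective model structure~\cite[2.3.7]{PavlovScholbach18} -- which packages the monoid-axiom and pushout-product verifications that you flag as ``the main obstacle,'' so that everything then falls out of \cite[3.2.1, 3.3.9, 3.4.1, 3.4.2]{PavlovScholbach18} with no further work. Two small imprecisions in your write-up: Corollary~\ref{cor:naive and blended} covers only $\sq$ and $\chq$, so for $\sset_*$ you must invoke Proposition~\ref{prop:injective}(i) directly (blended $=$ injective but \emph{not} naive there); and monoidality of the blended structure is the content of the corollary following Corollary~\ref{cor:naive and blended}, not Corollary~\ref{cor:naive and blended} itself.
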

\begin{proof}
	Since the genuine cofibrations are the same as the underlying cofibrations by Proposition~\ref{prop:injective}, the blended model structure coincides with the injective model structure. The injective model structure is strongly admissible by~\cite[2.3.7]{PavlovScholbach18} and therefore the flat model structure exists by~\cite[3.2.1]{PavlovScholbach18}. Quillen invariance holds by~\cite[3.3.9]{PavlovScholbach18}, monoidality follows as it is defined to be a monoidal left Bousfield localization, stability by~\cite[3.4.1]{PavlovScholbach18} and left properness and combinatoriality follows from~\cite[3.4.2]{PavlovScholbach18}.
\end{proof}

We now record some key properties of the flat model structure which we will use throughout this paper.
\begin{prop}\label{prop:omnibus}\leavevmode
\begin{itemize}
\item[(i)] A map is an acyclic flat fibration if and only if it is a levelwise acyclic flat fibration.
\item[(ii)] A map between flat fibrant objects is a flat fibration if and only if it is a levelwise flat fibration.
\item[(iii)] The identity functor is a left Quillen equivalence from the stable model structure to the flat model structure.
\end{itemize}
\end{prop}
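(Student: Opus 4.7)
The plan is to deduce all three parts from general facts about left Bousfield localization, applied to the presentation of the flat model structure on $\symsp(\C,K)$ as a localization of the level flat model structure. For part (i), since the flat model structure is a left Bousfield localization of the level flat model structure, the two share the same cofibrations. As acyclic fibrations are characterized by the right lifting property against all cofibrations, the classes of acyclic fibrations coincide. By construction, an acyclic fibration in the level flat model structure is a levelwise naive weak equivalence which is simultaneously a levelwise blended fibration, and this is exactly what is meant by a levelwise acyclic flat fibration.

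For part (ii), the forward direction is automatic: in any left Bousfield localization the class of acyclic cofibrations grows and hence the class of fibrations shrinks, so any flat fibration is in particular a fibration in the level flat model structure, i.e.\ a levelwise blended fibration. The reverse direction is precisely Proposition~\ref{prop:fibrationsbetweenlocals}(2) applied to our localization: a fibration between local (that is, flat fibrant) objects in the pre-localized model structure is automatically a fibration in the localization.

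For part (iii), the stable and flat model structures have the same weak equivalences (the stable equivalences), so the identity functor descends to the identity on homotopy categories; once we know it is a Quillen adjunction, the Quillen equivalence is automatic. To see that the identity $\symsp(\C,K)^{\mathrm{stable}} \to \symsp(\C,K)^{\mathrm{flat}}$ is left Quillen, since the weak equivalences agree it suffices to check that every stable cofibration is a flat cofibration. Both classes equal the cofibrations of the respective pre-localized level model structures (Bousfield localization preserves cofibrations), so this reduces to the inclusion of level projective cofibrations into level flat cofibrations. This in turn follows from the reverse inclusion of acyclic fibrations: a levelwise blended fibration is in particular levelwise an underlying fibration in $\C$ (since blended fibrations are genuine fibrations, and genuine fibrations restrict to fibrations on the trivial-subgroup fixed points), so every level flat acyclic fibration is a level projective acyclic fibration.

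The substantive content is concentrated in part (ii), where Proposition~\ref{prop:fibrationsbetweenlocals}(2) does the real work; parts (i) and (iii) are formal consequences of the fact that Bousfield localization preserves cofibrations while enlarging the class of weak equivalences. The main bookkeeping obstacle is to keep straight the multiple flavours of equivariant model structure (naive, blended, genuine) at each level, together with the distinction between the pre-localized level model structures and their stable localizations.
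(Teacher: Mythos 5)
Your proposal is correct and follows essentially the same route as the paper: part (i) from the fact that left Bousfield localization keeps the acyclic fibrations unchanged, part (ii) from Proposition~\ref{prop:fibrationsbetweenlocals}, and part (iii) by comparing lifting properties against level flat versus level projective acyclic fibrations via the implication blended fibration $\Rightarrow$ naive fibration. The only difference is that you spell out the intermediate step (blended $\Rightarrow$ genuine $\Rightarrow$ naive via trivial-subgroup fixed points), which the paper states without elaboration; this is a small added clarification rather than a different argument.
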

\begin{proof}
Part (i) follows from the fact that left Bousfield localization does not change the acyclic fibrations and part (ii) follows from Proposition~\ref{prop:fibrationsbetweenlocals}. For part (iii), since the stable model structure and the flat model structure have the same weak equivalences, it suffices to show that any stable cofibration is a flat cofibration. A map is a stable cofibration if and only if it has the left lifting property with respect to maps which are levelwise naive acyclic fibrations, and a map is a flat cofibration if and only if it has the left lifting property with respect to maps which are both levelwise naive weak equivalences and blended fibrations. Any blended fibration is a naive fibration, and therefore a stable cofibration is also a flat cofibration.
\end{proof}

\begin{cor}\label{cor:flatcofibrants}
	Let $S$ be a commutative monoid in $\symsp$, $\symsp(\sq)$ or $\symsp(\chq)$.
	The positive flat model structure can be right lifted to give a model structure on commutative $S$-algebras. Moreover, a positively flat cofibrant commutative $S$-algebra is also flat cofibrant as an $S$-module.
\end{cor}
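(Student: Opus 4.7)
The plan is to invoke Pavlov-Scholbach's general framework for commutative algebras in symmetric spectra. First I would verify that the positive flat model structure on $S$-modules is \emph{commutative admissible} in each of our three cases, meaning that the $n$-fold symmetric power $\mathrm{Sym}^n_S(f) = (f^{\square n})_{\Sigma_n}$ of any generating positive flat cofibration $f$ is itself a positive flat cofibration of $S$-modules, and is moreover acyclic whenever $f$ is. Combined with the monoid axiom, which descends from the already established flat model structure on $S$-modules, an application of Theorem~\ref{thm:model structure on algebras} then yields the right-lifted model structure on commutative $S$-algebras.

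For the second statement, I would argue via the standard cellular presentation of cofibrant commutative $S$-algebras. Any such algebra is a retract of a transfinite composition of pushouts along maps of the form $\mathrm{Sym}_S(f)\colon \mathrm{Sym}_S(X) \to \mathrm{Sym}_S(Y)$ where $f\colon X \to Y$ is a generating positive flat cofibration of $S$-modules. Each such pushout $B \to B \otimes_{\mathrm{Sym}_S(X)} \mathrm{Sym}_S(Y)$ admits a well-known filtration whose successive layers are pushouts along maps built from $(f^{\square n})_{\Sigma_n}$ smashed with appropriate $S$-modules. Commutative admissibility ensures that each such layer is a flat cofibration of $S$-modules; since flat cofibrations are closed under pushouts, transfinite compositions, and retracts, the conclusion follows.

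The principal technical obstacle is the commutative admissibility statement itself. Here the groundwork has already been laid: Proposition~\ref{prop:injective} identifies genuine cofibrations with underlying cofibrations in $\sq$ and $\chq$ (and the $\sset_*$ case is classical), which is exactly the input needed to show that the symmetric group quotients of pushout-product powers behave correctly. One can then feed this directly into Pavlov-Scholbach's abstract commutative admissibility theorem, whose hypotheses are fulfilled in each of our three underlying categories for essentially the same reasons that were used in the preceding theorem to establish the flat model structure itself. The remaining verifications are routine bookkeeping rather than fresh technical work.
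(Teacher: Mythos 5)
Your proposal is correct and takes essentially the same route as the paper: both rest on Proposition~\ref{prop:injective} to identify the blended model structure with the injective one, after which one invokes Pavlov--Scholbach's strong admissibility of the injective model structure and their general theorems on commutative algebras in symmetric spectra. The only difference is that the paper cites these results of Pavlov--Scholbach directly, whereas you sketch a re-derivation of what those citations contain (the symmetric power condition and the filtration argument); the mathematical content is the same.
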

\begin{proof}
	Since the blended model structure coincides with the injective model structure in these cases by Proposition~\ref{prop:injective}, and the injective model structure is strongly admissible~\cite[2.3.7]{PavlovScholbach18}, this is a consequence of~\cite[4.1, 4.4]{PavlovScholbach18}.
\end{proof}

The flat model structure is a left Bousfield localization of the level flat model structure where weak equivalences and fibrations are determined levelwise in the blended model structure. We can give a characterization of the fibrant objects in the flat model structure. 
\begin{prop}[{\cite[3.2.1]{PavlovScholbach18}}]\label{prop:fibrant}
	An object $X$ of $\symsp(\C,K)$ is flat fibrant if and only if $X$ is level flat fibrant and $X_n \to \underline{\mathrm{Hom}}(K,X_{n+1})$ is a naive weak equivalence where $\underline{\mathrm{Hom}}(K,-)$ is the right adjoint to $K \otimes -$.
\end{prop}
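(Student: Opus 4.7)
The plan is to identify the flat model structure on $\symsp(\C,K)$ as a left Bousfield localization of the level flat model structure and then apply the standard characterization of fibrant objects in a left Bousfield localization. By construction, the flat model structure is $L_S$ of the level flat model structure, where $S$ is the set of stabilization maps $\lambda_n^A \colon F_{n+1}(K \otimes A) \to F_n(A)$ adjoint to the identity on $K \otimes A$, with $A$ ranging over a suitable set of objects built from the (co)domains of the generating cofibrations of $\C$ (together with the appropriate equivariance needed for the blended structure on each level). By the general theory of left Bousfield localization, an object $X$ is fibrant in $L_S$ if and only if $X$ is level flat fibrant and $S$-local.

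Next, I would translate the $S$-locality condition via two adjunctions. First, the free/evaluation adjunction $F_n \dashv \mathrm{Ev}_n$ gives a natural identification of homotopy function complexes $\map(F_n(A), X) \simeq \map(A, X_n)$ computed in the blended model structure on $\Sigma_n$-objects. Second, the internal hom adjunction $K \otimes - \dashv \underline{\mathrm{Hom}}(K,-)$ identifies $\map(F_{n+1}(K\otimes A),X) \simeq \map(K\otimes A, X_{n+1}) \simeq \map(A, \underline{\mathrm{Hom}}(K, X_{n+1}))$. Thus $S$-locality for a level flat fibrant $X$ is equivalent to asking that for every $A$ in the generating set, the induced map
\[
\map(A, X_n) \to \map(A, \underline{\mathrm{Hom}}(K, X_{n+1}))
\]
is a weak equivalence. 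Since $A$ ranges over a set that detects weak equivalences in $\C$ (equivalently, in the blended structure, by Proposition~\ref{prop:injective} and Corollary~\ref{cor:naive and blended}, which tell us that the blended weak equivalences are the naive ones), a Yoneda-type detection argument shows this is equivalent to $X_n \to \underline{\mathrm{Hom}}(K,X_{n+1})$ being a naive weak equivalence in $\C$, as required.

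The main obstacle I anticipate is bookkeeping the equivariance carefully: the free spectrum $F_n(A)$ carries a built-in $\Sigma_n$-action and the blended structure intermingles naive and genuine notions. I would need to verify that locality with respect to the chosen $\lambda_n^A$ really detects the desired underlying naive equivalence rather than some strictly stronger equivariant condition. This is precisely where it matters that in the cases of interest the blended model structure uses naive weak equivalences and that the homotopy function complexes in the blended structure are computed with this in mind. Once this is confirmed, the argument reduces to combining Proposition~\ref{prop:fibrationsbetweenlocals} with the adjunction chain described above, recovering the characterization stated in~\cite[3.2.1]{PavlovScholbach18}.
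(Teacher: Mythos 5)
The paper provides no proof of this proposition; it is taken verbatim from Pavlov--Scholbach~\cite[3.2.1]{PavlovScholbach18}. Your strategy is the standard one there and in Hovey's original construction: exhibit the flat model structure as a left Bousfield localization of the level flat structure at a set of stabilization maps, use that fibrant objects in a localization are the $S$-local objects that are fibrant in the ambient structure, and translate $S$-locality through the $F_n \dashv \mathrm{Ev}_n$ and $K \otimes - \dashv \underline{\mathrm{Hom}}(K,-)$ adjunctions into the stated condition on $X_n \to \underline{\mathrm{Hom}}(K,X_{n+1})$. So the approach is correct and is essentially the one implicitly invoked by the citation.

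Two small points of precision. First, you invoke Proposition~\ref{prop:injective} and Corollary~\ref{cor:naive and blended} to say that blended weak equivalences are naive ones; this is true by definition of the blended model structure (it is constructed by Bousfield--Friedlander localization of the genuine structure so as to have exactly the naive weak equivalences), so no appeal to those results is needed, and indeed they only apply in the specific cases $\sset_*$, $\sq$, $\chq$ whereas the statement is formulated for general $\C$. Second, the Bousfield-localization fact you actually need is the one recorded in the text immediately before Proposition~\ref{prop:fibrationsbetweenlocals} --- that the fibrant objects of $L_S\C$ are the $S$-local objects --- rather than Proposition~\ref{prop:fibrationsbetweenlocals} itself, which concerns fibrations between local objects. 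The equivariance bookkeeping you flag is the genuine content; it is resolved in~\cite{PavlovScholbach18} by choosing the stabilizing set so that the resulting detection set for homotopy function complexes sees naive weak equivalences between blended fibrant objects, which is exactly what you anticipate.
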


The following corollary shows that stable model structures on $\symsp(\sq)$ and $\symsp(\chq)$ satisfy extra compatibility between commutative algebras and modules, unlike the stable model structure on $\symsp$.
\begin{cor}\label{cor:stable and flat}
The flat (resp. positive flat) model structure on $\symsp(\sq)$ and $\symsp(\chq)$ is the same as the stable (resp. positive stable) model structure.
\end{cor}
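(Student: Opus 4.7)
The plan is to reduce this to the observation that, in the $\mathbb{Q}$-linear settings $\C = \sq$ and $\chq$, the blended and naive model structures on $G\C$ coincide by Corollary~\ref{cor:naive and blended}. Once that is in hand, the underlying level model structures on either side agree, and the two localizations (flat versus stable) are Bousfield localizations of the same model structure with the same class of weak equivalences, forcing them to coincide.

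More precisely, I would first invoke Corollary~\ref{cor:naive and blended} to conclude that a map in $G\C$ is a blended (acyclic) fibration if and only if its underlying map in $\C$ is an (acyclic) fibration. Applied levelwise, this identifies the levelwise blended (acyclic) fibrations in $\symsp(\C)$ with the levelwise (acyclic) fibrations in $\C$. Consequently the level flat model structure of Pavlov-Scholbach coincides with Hovey's level model structure; applying the same observation only at positive levels, the positive level flat and positive level model structures also agree.

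Next, I would compare cofibrations in the two Bousfield-localized model structures. A stable cofibration is characterized by the left lifting property against levelwise acyclic fibrations in $\C$, while a flat cofibration is characterized by the left lifting property against maps that are simultaneously levelwise naive weak equivalences and levelwise blended fibrations. By the previous step these two classes of test maps coincide, so the stable cofibrations equal the flat cofibrations. Both model structures also share the same weak equivalences, namely the stable equivalences (see Proposition~\ref{prop:omnibus}), and since a model structure is determined by its cofibrations and weak equivalences, the stable and flat model structures are equal. The positive analogue goes through verbatim. The content of the argument has already been absorbed into Corollary~\ref{cor:naive and blended} — where $\mathbb{Q}$-linearity is used to symmetrize underlying lifts by averaging over $G$ — so I do not anticipate any genuine obstacle here; the formal write-up should be a few lines.
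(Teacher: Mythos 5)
Your proposal is correct and relies on the same key input as the paper, namely Corollary~\ref{cor:naive and blended} applied levelwise, together with the fact that both model structures have the stable equivalences as weak equivalences. The paper compares acyclic fibrations directly while you compare cofibrations by matching the test classes for the lifting property; these are dual ways of using the fact that a model structure is determined by its weak equivalences together with either its cofibrations or its acyclic fibrations, so the two arguments are essentially the same.
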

\begin{proof}
The weak equivalences in both the flat and stable model structure are the stable equivalences. Therefore it suffices to show that they have the same acyclic fibrations. A map is an acyclic fibration in the stable model structure if and only if it is a levelwise naive acyclic fibration. By Corollary~\ref{cor:naive and blended}, this is the case if and only if it is a levelwise acyclic fibration in the blended model structure, i.e., an acyclic flat fibration.
\end{proof}
In light of the previous corollary, we could call the model structure which we use on $\symsp(\sq)$ and $\symsp(\chq)$ either flat or stable. However, we will often refer to it as the flat model structure to remind the reader of the extra compatibility between commutative algebras and modules given by Corollary~\ref{cor:flatcofibrants}, which we will use throughout the paper.

\section{Shipley's algebraicization theorem in the flat setting}\label{sec:shipley1}
In this section we show that the chain of Quillen equivalences given by Shipley~\cite{Shipley07} for the stable model structure are still Quillen equivalences in the flat model structure, for the rational case. The identity functor from the stable model structure to the flat model structure is a left Quillen equivalence by Proposition~\ref{prop:omnibus}. Therefore by the 2-out-of-3 property of Quillen equivalences, it is sufficient to check that we get Quillen adjunctions in the flat model structure. In fact, since the stable and flat model structure are the same on $\symsp(\sq)$ and $\symsp(\chq)$ by Corollary~\ref{cor:stable and flat}, this reduces to just checking that the first adjunction is a Quillen adjunction. The following diagram summarises all of the adjunctions between $H\mathbb{Q}$-modules and chain complexes of $\mathbb{Q}$-modules. \begin{equation}\label{diagram}
\begin{tikzcd}
\mod{H\mathbb{Q}}^\mathrm{flat} \arrow[r, yshift=2mm, "Z"] \arrow[dd, xshift=4mm, "1"] \arrow[dd, phantom, "\simeq_Q"] & \symsp(\sq)_{\mathrm{flat}} \arrow[dd, xshift=4mm, "1"] \arrow[r, yshift=-2mm, "\phi^* N"'] \arrow[l,yshift=-2mm, "U"] \arrow[dd, phantom, "{=}"] & \symsp(\chq)_{\mathrm{flat}}  \arrow[dd, xshift=4mm, "1"] \arrow[dd, phantom, "{=}"] \arrow[l, yshift=2mm, "L"'] \arrow[r, yshift=2mm, "D"] &  \arrow[dd, xshift=4mm, "1"] \arrow[dd, phantom, "{=}"] \mathrm{Ch}_\mathbb{Q} \arrow[l, yshift=-2mm, "R"] \\ & & & \\ \mod{H\mathbb{Q}}^\mathrm{stable} \arrow[r, yshift=3mm, "Z"] \arrow[uu, xshift=-4mm, "1"]  \arrow[r, phantom, "\simeq_Q"] &  \symsp(\sq)_{\mathrm{stable}} \arrow[l,yshift=-3mm, "U"] \arrow[uu, xshift=-4mm, "1"] \arrow[r, yshift=-3mm, "\phi^* N"'] \arrow[r, phantom, "\simeq_Q"] & \symsp(\chq)_{\mathrm{stable}} \arrow[uu, xshift=-4mm, "1"] \arrow[l, yshift=3mm, "L"'] \arrow[r, yshift=3mm, "D"] \arrow[r, phantom, "\simeq_Q"] & \mathrm{Ch}_\mathbb{Q} \arrow[uu, xshift=-4mm, "1"] \arrow[l, yshift=-3mm, "R"] 
\end{tikzcd}\tag{$\dagger$}
\end{equation}
The functors will be defined throughout the rest of the section.

The model structure on simplicial $\mathbb{Q}$-modules is right lifted from simplicial sets along the forgetful functor $\sq \to \sset_*$. Applying this functor levelwise gives a forgetful functor $\widetilde{U}\colon\symsp(\sq) \to \symsp$.
Note that $\widetilde{U}\text{Sym}(\widetilde{\mathbb{Q}}S^1) = (\mathbb{Q}, \widetilde{\mathbb{Q}}S^1, \widetilde{\mathbb{Q}}S^2,...)$ which is $H\mathbb{Q}$~\cite[1.2.5]{HoveyShipleySmith00}.  Therefore the forgetful functor $\widetilde{U}$ can be viewed as a functor $U\colon\symsp(\sq) \to \mod{H\mathbb{Q}}$. 

Firstly, we show that the forgetful functor $\widetilde{U}$ is right Quillen when $\Sp^\Sigma$ is equipped with the flat model structure. Even though the flat model structure and the stable model structure on $\symsp(\sq)$ are the same by Corollary~\ref{cor:stable and flat}, in order to prove the following it is actually convenient to work with the description of the acyclic fibrations in the flat model structure.
\begin{lem}\label{lem:Udetects}
	The forgetful functor \[\widetilde{U}\colon\symsp(\sq)_\mathrm{flat} \to \symsp_\mathrm{flat}\] preserves fibrations, and preserves and detects weak equivalences.
\end{lem}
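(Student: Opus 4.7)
The plan is to reduce the statement to checking its hypotheses in a convenient form: we use Proposition~\ref{prop:Dugger} for the fibration part, and we exploit Corollary~\ref{cor:stable and flat} together with standard material on stable equivalences for the weak equivalence part. Throughout, the key technical input is that $\widetilde{\mathbb{Q}} \dashv \widetilde{U}$ is a strong monoidal Quillen adjunction, which gives the crucial natural isomorphism $\widetilde{U}\,\underline{\mathrm{Hom}}(\widetilde{\mathbb{Q}}S^1, -)\cong \underline{\mathrm{Hom}}(S^1,\widetilde{U}(-))$.

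For weak equivalences, I would first observe that in both $\symsp(\sq)$ and $\symsp$ the weak equivalences in the flat model structure are, by construction (and by Corollary~\ref{cor:stable and flat} on the left-hand side), precisely the stable equivalences. Since the model structure on $\sq$ is right lifted from $\sset_*$, the forgetful functor $\widetilde{U}:\sq\to\sset_*$ preserves and reflects weak equivalences, and this extends levelwise to level equivalences in symmetric spectra. Preservation and detection of stable equivalences is already contained in the stable-model half of Shipley's algebraicization theorem; alternatively, one can argue directly by taking a stable fibrant replacement $X\to \widehat X$ in $\symsp(\sq)$, noting that $\widetilde U$ sends $\Omega$-spectra to $\Omega$-spectra via the adjunction identity above, and invoking Proposition~\ref{prop:fibrationsbetweenlocals}(1).

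For fibrations, Proposition~\ref{prop:Dugger} reduces the task to showing that $\widetilde U$ preserves (a) all acyclic flat fibrations and (b) flat fibrations between flat fibrant objects. For (a), Proposition~\ref{prop:omnibus}(i) identifies the source of $\widetilde U$ as a levelwise acyclic blended fibration in $\Sigma_n\sq$; under the identification of blended and injective model structures (Proposition~\ref{prop:injective}) its underlying map is an acyclic Kan fibration of simplicial $\mathbb{Q}$-modules, so $\widetilde U$ sends it to an equivariant acyclic Kan fibration of simplicial sets, i.e. an acyclic blended fibration in $\Sigma_n\sset_*$, giving an acyclic flat fibration in $\symsp$ by Proposition~\ref{prop:omnibus}(i) once again. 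For (b), I would first check that $\widetilde U$ sends flat fibrant objects to flat fibrant objects using the characterization in Proposition~\ref{prop:fibrant}: level flat fibrancy passes through $\widetilde U$ by the same genuine-fibration argument together with commutation of $(-)^H$ with $\widetilde U$, and the $\Omega$-spectrum condition passes through precisely because of the identification $\widetilde U\,\underline{\mathrm{Hom}}(\widetilde{\mathbb{Q}}S^1,X_{n+1})\cong \underline{\mathrm{Hom}}(S^1,\widetilde U X_{n+1})$ and the fact that $\widetilde U$ preserves weak equivalences. Then Proposition~\ref{prop:omnibus}(ii) lets me reduce fibrations between fibrant objects to levelwise flat fibrations, and these are handled by the explicit description of blended fibrations via the homotopy pullback square (\ref{homotopy pullback}): $\widetilde U$ preserves genuine fibrations, simplicial cotensors, and weak equivalences, hence homotopy pullbacks, so the diagnostic square is preserved.

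The main obstacle I anticipate is the detection of stable equivalences, since this notion is not levelwise and one is tempted to argue via stable fibrant replacements whose underlying maps are not \emph{a priori} acyclic cofibrations in $\symsp$. The cleanest remedy is to package both preservation and detection as a consequence of the known Quillen adjunction $\widetilde{\mathbb{Q}} \dashv \widetilde U$ between stable model structures from Shipley's work, combined with Corollary~\ref{cor:stable and flat} to transfer the conclusion to the flat model structure on $\symsp(\sq)$. Everything else in the proof is a direct unwinding of Propositions~\ref{prop:omnibus},~\ref{prop:fibrant}, and~\ref{prop:injective}, together with the adjunction identity for the internal hom.
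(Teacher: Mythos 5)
Your overall strategy is the same as the paper's: reduce the fibration statement via Proposition~\ref{prop:Dugger} to preservation of acyclic flat fibrations and of flat fibrations between flat fibrant objects, and cite Shipley for preservation and detection of stable equivalences. The treatment of flat fibrant objects via Proposition~\ref{prop:fibrant} and the $\widetilde{\mathbb{Q}} \dashv \widetilde{U}$ adjunction is likewise what the paper does.

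There is, however, a gap in your argument for acyclic flat fibrations. You pass from ``the underlying map is an acyclic Kan fibration of simplicial $\mathbb{Q}$-modules'' to ``$\widetilde{U}$ sends it to an equivariant acyclic Kan fibration of simplicial sets, i.e.\ an acyclic blended fibration in $\Sigma_n\sset_*$.'' That last identification does not hold as stated: as the paper emphasizes immediately after Corollary~\ref{cor:naive and blended}, the blended model structure on $\Sigma_n\sset_*$ is \emph{not} the naive one, so an acyclic blended fibration there must have $f^H$ an acyclic Kan fibration for every $H \leq \Sigma_n$, not merely on underlying simplicial sets. To repair this you must track fixed points: in $\Sigma_n\sq$ naive and genuine cofibrations coincide (Proposition~\ref{prop:injective}(ii)), hence so do the naive and genuine acyclic fibrations; thus your levelwise acyclic blended fibration has all $H$-fixed-point maps acyclic fibrations in $\sq$. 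Since $\widetilde{U}$ commutes with $(-)^H$ (both are right adjoints) and preserves acyclic fibrations because the model structure on $\sq$ is right lifted, the image has all fixed-point maps acyclic Kan fibrations, which is what an acyclic blended fibration in $\Sigma_n\sset_*$ requires. You already invoke exactly the ingredients needed (commutation of $(-)^H$ with $\widetilde{U}$ and preservation of genuine fibrations) in your step~(b), so this is a local omission rather than a wrong idea. The paper organizes the argument more uniformly by showing once that $\widetilde{U}$ preserves naive weak equivalences, genuine fibrations, and the homotopy pullback square~(\ref{homotopy pullback}) --- hence blended fibrations --- which then serves both your step~(a) and the levelwise part of your step~(b) without the asymmetric identification you attempt.
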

\begin{proof}
	The forgetful functor preserves and detects weak equivalences by~\cite[Proof of 4.1]{Shipley07}. We now show that it preserves the fibrations. By Proposition~\ref{prop:Dugger} it is sufficient to show that $\widetilde{U}$ preserves the acyclic flat fibrations and the flat fibrations between flat fibrant objects.
	
	 A map is an acyclic flat fibration if and only if it is a levelwise acyclic flat fibration, so it suffices to show that the forgetful functor $\sq \to \sset_*$ preserves naive weak equivalences and blended fibrations. Since the model structure on $\sq$ is right lifted from $\sset_*$, the forgetful functor preserves naive weak equivalences and genuine fibrations. It remains to check the homotopy pullback condition~(\ref{homotopy pullback}), which is an immediate consequence of the fact that the forgetful functor preserves homotopy pullbacks.
	 
	 A flat fibration between flat fibrant objects is a levelwise flat fibration and hence $\widetilde{U}$ sends it to a levelwise flat fibration by the previous paragraph. Therefore, it remains to show that $\widetilde{U}$ preserves flat fibrant objects. Let $X$ be a flat fibrant object in $\symsp(\sq)_\mathrm{flat}$. By Proposition~\ref{prop:fibrant}, $X$ is level flat fibrant and $X_n \to \underline{\mathrm{Hom}}(\widetilde{\mathbb{Q}}S^1, X_{n+1})$ is a naive weak equivalence. It follows that $\widetilde{U}X$ is level flat fibrant, and since $\widetilde{U}$ preserves naive weak equivalences, we also have that $\widetilde{U}X_n \to \widetilde{U}\underline{\mathrm{Hom}}(\widetilde{\mathbb{Q}}S^1, X_{n+1})$ is a naive weak equivalence. By the $\widetilde{\mathbb{Q}} \dashv U$ adjunction, it follows that $$\widetilde{U}X_n \to \underline{\mathrm{Hom}}(S^1, \widetilde{U}X_{n+1})$$ is a naive weak equivalence, and hence by Proposition~\ref{prop:fibrant}, $\widetilde{U}X$ is flat fibrant. Therefore, $\widetilde{U}$ preserves flat fibrant objects.
\end{proof}

\begin{cor}\label{cor:Udetects}
	The forgetful functor \[U\colon \symsp(\sq)_\mathrm{flat} \to \mod{H\mathbb{Q}}^\mathrm{flat}\] preserves fibrations, and preserves and detects weak equivalences.
\end{cor}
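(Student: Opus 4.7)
The plan is to leverage Lemma~\ref{lem:Udetects} directly by recognising that $U$ is essentially the same functor as $\widetilde{U}$, just with codomain relabelled as $\mod{H\mathbb{Q}}^\mathrm{flat}$. Concretely, I would let $V\colon \mod{H\mathbb{Q}}^\mathrm{flat} \to \symsp_\mathrm{flat}$ denote the forgetful functor from $H\mathbb{Q}$-modules to symmetric spectra, so that the composite $V \circ U$ is precisely $\widetilde{U}$. By Theorem~\ref{thm:model structure on algebras}, the flat model structure on $\mod{H\mathbb{Q}}$ is right lifted from $\symsp_\mathrm{flat}$ along $V$, which means that $V$ creates (in particular preserves and detects) both weak equivalences and fibrations.

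First I would handle weak equivalences. If $f$ is a weak equivalence in $\symsp(\sq)_\mathrm{flat}$, then $V(Uf) = \widetilde{U}f$ is a weak equivalence in $\symsp_\mathrm{flat}$ by Lemma~\ref{lem:Udetects}, and since $V$ creates weak equivalences, $Uf$ is a weak equivalence in $\mod{H\mathbb{Q}}^\mathrm{flat}$. Conversely, if $Uf$ is a weak equivalence in $\mod{H\mathbb{Q}}^\mathrm{flat}$, then $\widetilde{U}f = V(Uf)$ is a weak equivalence in $\symsp_\mathrm{flat}$, and the detection half of Lemma~\ref{lem:Udetects} forces $f$ to be a weak equivalence in $\symsp(\sq)_\mathrm{flat}$.

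Next, for fibrations, the same factorisation argument applies: if $f$ is a fibration in $\symsp(\sq)_\mathrm{flat}$ then $V(Uf) = \widetilde{U}f$ is a fibration in $\symsp_\mathrm{flat}$ by Lemma~\ref{lem:Udetects}, and since $V$ creates fibrations, $Uf$ is a fibration in $\mod{H\mathbb{Q}}^\mathrm{flat}$.

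There is no real obstacle here: the substantive content — namely, that $\widetilde{U}$ preserves fibrant objects, homotopy pullbacks, and the $\Omega$-spectrum condition characterising flat fibrancy via the $\widetilde{\mathbb{Q}} \dashv U$ adjunction — has already been absorbed into Lemma~\ref{lem:Udetects}. All that remains is the bookkeeping observation that passing from $\symsp_\mathrm{flat}$ to $\mod{H\mathbb{Q}}^\mathrm{flat}$ costs nothing, since the model structure on modules was defined so that $V$ creates fibrations and weak equivalences.
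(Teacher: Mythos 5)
Your proposal is correct and is exactly the (unwritten) argument the paper relies on: the corollary is stated without proof immediately after Lemma~\ref{lem:Udetects}, and the intended justification is precisely the observation that $U$ is $\widetilde{U}$ with codomain relabelled via the identification $\widetilde{U}\,\mathrm{Sym}(\widetilde{\mathbb{Q}}S^1) = H\mathbb{Q}$, together with the fact that the flat model structure on $\mod{H\mathbb{Q}}$ is right lifted along the forgetful functor to $\symsp_\mathrm{flat}$, which therefore creates fibrations and weak equivalences. Your bookkeeping fills in exactly what the paper leaves implicit.
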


Recall from~\cite[4.3]{Shipley07} that the forgetful functor $U\colon\symsp(\sq) \to \mod{H\mathbb{Q}}$ has a left adjoint $Z$ defined by \[Z(X) = H\mathbb{Q} \otimes_{\widetilde{\mathbb{Q}}H\mathbb{Q}} \widetilde{\mathbb{Q}}X\] where $H\mathbb{Q}$ is viewed as a $\widetilde{\mathbb{Q}}H\mathbb{Q}$-module via the ring map $\beta\colon \widetilde{\mathbb{Q}}H\mathbb{Q} \to H\mathbb{Q}$ given by the monad structure on $\widetilde{\mathbb{Q}}$.

\begin{prop}\label{prop:step1}
	The adjunction 
	\begin{center}
		\begin{tikzcd}
		\mod{H\mathbb{Q}}^\mathrm{flat} \arrow[r, yshift=1mm, "Z"] &   \symsp(\sq)_{\mathrm{flat}} \arrow[l,yshift=-1mm, "U"]
		\end{tikzcd}
	\end{center}
	is a strong symmetric monoidal Quillen equivalence with the respect to the flat model structures.
\end{prop}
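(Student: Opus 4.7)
The plan is to prove the three assertions in order: that $(Z,U)$ is a Quillen adjunction, that $Z$ is strong symmetric monoidal, and that the adjunction is a Quillen equivalence. The first is immediate from the preparatory work above: Corollary~\ref{cor:Udetects} says that $U\colon \symsp(\sq)_\mathrm{flat} \to \mod{H\mathbb{Q}}^\mathrm{flat}$ preserves fibrations and weak equivalences, and hence preserves acyclic fibrations as well, so $(Z,U)$ is a Quillen adjunction.

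To see that $Z$ is strong symmetric monoidal, I would factor the formula $Z(X) = H\mathbb{Q} \otimes_{\widetilde{\mathbb{Q}}H\mathbb{Q}} \widetilde{\mathbb{Q}}X$ as the composite of the levelwise free $\mathbb{Q}$-module functor $\widetilde{\mathbb{Q}}\colon \mod{H\mathbb{Q}}(\symsp) \to \mod{\widetilde{\mathbb{Q}}H\mathbb{Q}}(\symsp(\sq))$ with extension of scalars along the monad map $\beta\colon \widetilde{\mathbb{Q}}H\mathbb{Q} \to H\mathbb{Q}$, where the target $H\mathbb{Q} = \mathrm{Sym}(\widetilde{\mathbb{Q}}S^1)$ is the monoidal unit of $\symsp(\sq)$. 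The first factor is strong symmetric monoidal because $\widetilde{\mathbb{Q}}$ preserves colimits and satisfies $\widetilde{\mathbb{Q}}(X \wedge Y) \cong \widetilde{\mathbb{Q}}X \otimes \widetilde{\mathbb{Q}}Y$; the second is strong symmetric monoidal as base change along a map of commutative monoids. Under the composite, the unit $H\mathbb{Q}$ of $\mod{H\mathbb{Q}}^\mathrm{flat}$ is sent to $H\mathbb{Q} \otimes_{\widetilde{\mathbb{Q}}H\mathbb{Q}} \widetilde{\mathbb{Q}}H\mathbb{Q} \cong H\mathbb{Q}$, confirming that the unit is preserved.

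For the Quillen equivalence I would use a 2-out-of-3 comparison with Shipley's stable version via the commutative square of Quillen adjunctions displayed in diagram~(\ref{diagram}). The bottom $(Z,U)$ between the stable model structures is a Quillen equivalence by Shipley~\cite{Shipley07}; the left vertical identity $\mod{H\mathbb{Q}}^\mathrm{stable} \rightleftarrows \mod{H\mathbb{Q}}^\mathrm{flat}$ is a Quillen equivalence by Proposition~\ref{prop:omnibus}(iii); and the right vertical is literally the identity of model categories by Corollary~\ref{cor:stable and flat}. Applying 2-out-of-3 to the resulting triangle of left Quillen equivalences then forces the top row to be a Quillen equivalence. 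The main obstacle in this argument is hidden in the preparation: showing that $U$ remains right Quillen in the flat setting (Lemma~\ref{lem:Udetects} and Corollary~\ref{cor:Udetects}) required unpacking the definition of blended fibrations together with the characterization of flat fibrant objects from Proposition~\ref{prop:fibrant}. Once this is in hand the present proposition is essentially formal, with only minor bookkeeping of monoidal units needed for the strong monoidal claim.
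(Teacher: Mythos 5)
Your proposal is correct and follows essentially the same approach as the paper's proof: establish the Quillen adjunction from Corollary~\ref{cor:Udetects}, deduce the Quillen equivalence by 2-out-of-3 against Shipley's stable version using Diagram~(\ref{diagram}), and conclude strong symmetric monoidality from the fact that $Z$ is strong monoidal together with cofibrancy of the unit $H\mathbb{Q}$. The only difference is that you unpack in detail why $Z$ is strong symmetric monoidal (factoring through $\widetilde{\mathbb{Q}}$ and base change along $\beta$), which the paper leaves implicit as it is already established in Shipley's work.
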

\begin{proof}
	The forgetful functor $U$ preserves weak equivalences and fibrations in the flat model structure by Corollary~\ref{cor:Udetects}. Therefore, $Z \dashv U$ is a Quillen adjunction and hence by the 2-out-of-3 property of Quillen equivalences, is a Quillen equivalence; see Diagram~(\ref{diagram}). It is a strong symmetric monoidal Quillen equivalence as $Z$ is strong symmetric monoidal and the unit $H\mathbb{Q}$ is a cofibrant $H\mathbb{Q}$-module.
\end{proof}

Applying the normalization functor $N\colon \sq \to \chq$ levelwise yields a functor $$N\colon \symsp(\sq) \to \mod{\mc{N}}\left((\chq)^\Sigma\right)$$ where $\mc{N} = N(\text{Sym}(\widetilde{\mathbb{Q}}S^1))$. There is a ring map $\phi\colon \text{Sym}(\mathbb{Q}[1]) \to \mc{N}$ induced levelwise by the lax symmetric monoidal structure on $N$, and therefore composing $N$ and $\res[\phi]$ gives a functor $\res[\phi]N\colon \symsp(\sq) \to \symsp(\chq)$. This functor has a left adjoint denoted $L$ by~\cite[\S 3.3]{SchwedeShipley03}. It is important to note that the left adjoint is not just the composite of the left adjoints of $N$ and $\res[\phi]$. Shipley~\cite[4.4]{Shipley07} shows that \begin{center}
		\begin{tikzcd}
		\symsp(\sq) \arrow[r, yshift=-1mm, "\phi^* N"'] & \symsp(\chq) \arrow[l, yshift=1mm, "L"']
		\end{tikzcd}
	\end{center}
is a weak symmetric monoidal Quillen equivalence.

The final step is the passage from symmetric spectra in non-negatively graded chain complexes to unbounded chain complexes. The inclusion $\chq \to \mathrm{Ch}_\mathbb{Q}$ of non-negatively graded chain complexes into unbounded complexes has a right adjoint $C_0$ called the connective cover. This is defined by $(C_0X)_n = X_n$ for $n \geq 1$ and $(C_0X)_0 = \mathrm{cycles}(X_0)$. 
Using the connective cover, one defines a functor $R\colon \mathrm{Ch}_\mathbb{Q} \to \symsp(\chq)$ by $(RY)_n = C_0(Y \otimes\mathbb{Q}[n])$. Recall from~\cite{Shipley07} that this functor has a left adjoint $D$. Moreover, $D$ is strong symmetric monoidal as proved by Strickland~\cite{Strickland}. Note that this fact has been subject to some confusion, see~\cite{Shipleycorrection}. Shipley~\cite[4.7]{Shipley07} shows that \begin{center}
		\begin{tikzcd}
		\symsp(\chq) \arrow[r, yshift=1mm, "D"] & \mathrm{Ch}_\mathbb{Q} \arrow[l, yshift=-1mm, "R"]
		\end{tikzcd}
	\end{center}
is a strong symmetric monoidal Quillen equivalence where $\mathrm{Ch}_\mathbb{Q}$ is equipped with the projective model structure.

Combining the results of this section gives a proof of Theorem~\ref{thm:main1}.

\section{Extension to commutative algebras}
Let $F:\C \rightleftarrows \D:G$ be a weak symmetric monoidal Quillen pair. As $G$ is lax symmetric monoidal, it preserves commutative monoids and therefore gives rise to a functor $G\colon \mathrm{CMon}(\D) \to \mathrm{CMon}(\C)$. If the Quillen pair is a \emph{strong} symmetric monoidal Quillen pair, then $F$ also lifts to a functor on commutative monoids. However, when $F$ is only oplax symmetric monoidal, it will not necessarily preserve commutative monoids. 

We always equip the category of commutative monoids with the model structure right lifted along the forgetful functor, see Theorem~\ref{thm:model structure on algebras}. The forgetful functor $U\colon \mathrm{CMon}(\C) \to \C$ has a left adjoint given by \[\mathbb{P}_\C (X) = \bigvee_{n \geq 0} X^{\wedge n}/\Sigma_n.\] The adjoint lifting theorem~\cite[4.5.6]{Borceux94} implies that the lift of $G$ to the categories of commutative monoids has a left adjoint $\widetilde{F}$ defined by the coequalizer diagram
\begin{center}
	\begin{tikzcd}
	\mathbb{P}_\D F\mathbb{P}_\C X \arrow[r, yshift=1mm] \arrow[r, yshift=-1mm] & \mathbb{P}_\D FX \arrow[r] & \widetilde{F}X.
	\end{tikzcd}
\end{center}
One of the maps is obtained from the counit of the $\mathbb{P}_\C \dashv U$ adjunction, and the other map is adjunct to the natural map \[F\mathbb{P}_\C X \cong \bigvee_{n \geq 0}F(X^{\wedge n})/\Sigma_n \to \bigvee_{n \geq 0}(FX)^{\wedge n}/\Sigma_n \cong \mathbb{P}_\D FX\] obtained from the oplax structure on $F$. Since $G$ preserves commutative monoids, there is a natural isomorphism $UG \cong GU$ and by adjunction there is a natural isomorphism \[\mathbb{P}_\D F \cong \widetilde{F}\mathbb{P}_\C.\]

Before we can state a theorem about lifting weak symmetric monoidal Quillen equivalences to Quillen equivalences on commutative monoids, we need to impose a hypothesis.
\begin{hyp}\label{hyp:orbits}
Let $F:\C \rightleftarrows \D : G$ be a weak symmetric monoidal Quillen equivalence. For any cofibrant object $X$ of $\C$, the natural map \[F(X^{\wedge n})/\Sigma_n \to (FX)^{\wedge n}/\Sigma_n\] is a weak equivalence in $\D$.
\end{hyp}

\begin{lem}\label{lem:orbits}
Let $F:\C \rightleftarrows \D : G$ be a weak symmetric monoidal Quillen equivalence. This satisfies Hypothesis~\ref{hyp:orbits} if either of the following conditions hold:
\begin{itemize}
\item[(i)] $F:\C \rightleftarrows \D : G$ is a strong symmetric monoidal Quillen equivalence;
\item[(ii)] underlying cofibrant objects in $\Sigma_n\D$ are naive cofibrant.
\end{itemize}
\end{lem}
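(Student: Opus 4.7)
The plan is to handle the two cases separately, using the standard fact that taking $\Sigma_n$-orbits is left Quillen with respect to the naive (underlying) model structure on $\Sigma_n\D$, since its right adjoint---the trivial action functor $\D \to \Sigma_n\D$---trivially preserves underlying weak equivalences and fibrations.

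For case (i), if $F$ is strong symmetric monoidal then the oplax comparison map $F(A \wedge B) \to FA \wedge FB$ is an isomorphism. Iterating this gives a $\Sigma_n$-equivariant isomorphism $F(X^{\wedge n}) \xrightarrow{\cong} (FX)^{\wedge n}$, which descends to an isomorphism on $\Sigma_n$-orbits for any object $X$ of $\C$ (cofibrant or not). So (i) is immediate.

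For case (ii), the argument splits into two steps. First, I would show by induction on $n$ that for cofibrant $X$ in $\C$, the natural map $F(X^{\wedge n}) \to (FX)^{\wedge n}$ is a weak equivalence in $\D$. The base case is trivial; for the inductive step, write $X^{\wedge n} = X \wedge X^{\wedge(n-1)}$ and factor the comparison as the oplax structure map $F(X \wedge X^{\wedge(n-1)}) \to FX \wedge F(X^{\wedge(n-1)})$ followed by $\mathrm{id}_{FX} \wedge (F(X^{\wedge(n-1)}) \to (FX)^{\wedge(n-1)})$. The first map is a weak equivalence by the weak symmetric monoidal hypothesis applied to the cofibrant objects $X$ and $X^{\wedge(n-1)}$ (the latter being cofibrant by the pushout-product axiom), and the second is a weak equivalence by the inductive hypothesis, smashed with the cofibrant object $FX$; Ken Brown's lemma applied to the left Quillen functor $FX \wedge -$ handles this.

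Second, I would observe that both $F(X^{\wedge n})$ and $(FX)^{\wedge n}$ are cofibrant as underlying objects of $\D$ (the first since $F$ is left Quillen and $X^{\wedge n}$ is cofibrant in $\C$, the second as an iterated pushout-product of the cofibration $\emptyset \to FX$). Under hypothesis (ii), they are then naive cofibrant in $\Sigma_n\D$. Since $(-)/\Sigma_n$ is left Quillen for the naive model structure, Ken Brown's lemma implies it preserves the weak equivalence between them, yielding the desired equivalence $F(X^{\wedge n})/\Sigma_n \to (FX)^{\wedge n}/\Sigma_n$. The main subtle point is the interaction with the pushout-product axiom to guarantee underlying cofibrancy of $(FX)^{\wedge n}$ and to run the induction; once those cofibrancy statements are in place, everything else follows from Ken Brown's lemma and the weak symmetric monoidal hypothesis.
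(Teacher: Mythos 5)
Your proof is correct and takes essentially the same approach as the paper: reduce to a weak equivalence between naive cofibrant objects in $\Sigma_n\D$ and apply Ken Brown's lemma to the orbits functor $(-)/\Sigma_n$, which is left Quillen for the naive model structure. The only differences are cosmetic --- you justify that $(-)/\Sigma_n$ is naive left Quillen directly via its right adjoint (the trivial-action functor), whereas the paper routes this through the genuine model structure and its generating cofibrations, and you spell out the induction showing $F(X^{\wedge n}) \to (FX)^{\wedge n}$ is a weak equivalence for cofibrant $X$, a step the paper leaves implicit in the phrase ``by definition of a weak symmetric monoidal Quillen pair.''
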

\begin{proof}
The first part follows immediately from the definition. For the second part, let $X$ be cofibrant in $\C$. By definition of a weak symmetric monoidal Quillen pair, the natural map $F(X^{\wedge n}) \to (FX)^{\wedge n}$ is a weak equivalence between cofibrant objects in the injective model structure on $\Sigma_n\D$. By hypothesis, this is moreover a naive weak equivalence between naive cofibrant objects. From the description of the generating (acyclic) cofibrations given in Proposition~\ref{prop:genuinemodel} one can see that the orbits functor $(-)/\Sigma_n\colon \Sigma_n\D \to \D$ is left Quillen when $\Sigma_n\D$ is equipped with the genuine model structure. Since the identity is a left Quillen functor from the naive model structure on $\Sigma_n\D$ to the genuine model structure, it follows that $(-)/\Sigma_n\colon \Sigma_n\D \to \D$ is left Quillen when $\Sigma_n\D$ is equipped with the naive model structure. By Ken Brown's lemma, it then follows that $F(X^{\wedge n})/\Sigma_n \to (FX)^{\wedge n}/\Sigma_n$ is a weak equivalence in $\D$.
\end{proof}

We now state when weak symmetric monoidal Quillen equivalences lift to Quillen equivalences between the categories of commutative monoids. This result is closely related to work of Schwede-Shipley, White and White-Yau. Schwede-Shipley~\cite[3.12(3)]{SchwedeShipley03} consider the related question on associative monoids without the commutativity assumption, White~\cite[4.19]{White17} provides hypotheses under which \emph{strong} monoidal Quillen equivalences lift to the categories of commutative monoids and White-Yau~\cite[5.8]{WhiteYau19} provide hypotheses under which \emph{weak} monoidal Quillen equivalences lift. The most general of the statements is that of White-Yau where the result follows from a more general result about lifting Quillen equivalences to categories of coloured operads. 

For orientation in the following statement and proof, the reader might like to consider $\C$ being the positive flat model structure on spectra and $\widetilde{\C}$ being the flat model structure on spectra. The hypotheses are designed in such a way that this example fits into the framework. We note that we write left adjoint functors on the left in an adjoint pair displayed vertically.
\begin{thm}\label{thm:lift to comm mon}
	Let $F:\C \rightleftarrows \D : G$ be a weak symmetric monoidal Quillen equivalence between cofibrantly generated model categories which satisfy the commutative monoid axiom and the monoid axiom. Suppose that the underlying categories of $\C$ and $\D$ support other model structures denoted $\widetilde{\C}$ and $\widetilde{\D}$ respectively, with the same weak equivalences, such that 
\[ 
\begin{tikzcd}
\C \arrow[d, xshift=-1mm, "1"'] \arrow[r, "F", yshift=1mm] & \D \arrow[d, xshift=-1mm, "1"']  \arrow[l, "G", yshift=-1mm] \\
\widetilde{\C} \arrow[r, "F", yshift=1mm] \arrow[u, xshift=1mm, "1"'] & \widetilde{\D} \arrow[u, xshift=1mm, "1"'] \arrow[l, "G", yshift=-1mm]
\end{tikzcd}
\]	
are all Quillen adjunctions. Suppose that cofibrant commutative monoids in $\C$ (resp. $\D$) are cofibrant in $\widetilde{\C}$ (resp. $\widetilde{\D}$), the generating cofibrations $I$ of $\C$ have cofibrant source (and hence target), the monoidal unit $\mathds{1}_\C$ of $\C$ is cofibrant and that Hypothesis~\ref{hyp:orbits} is satisfied.
Then there is a Quillen equivalence \[\widetilde{F}: \mathrm{CMon}(\C) \rightleftarrows \mathrm{CMon}(\D): G.\]
\end{thm}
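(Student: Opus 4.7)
The strategy is to verify the standard criteria for a Quillen equivalence: first that $(\widetilde{F}, G)$ is a Quillen adjunction, then that $G$ reflects weak equivalences between fibrant commutative monoids, and finally that the derived unit is a weak equivalence at every cofibrant commutative monoid. The Quillen adjunction claim is immediate from Theorem~\ref{thm:model structure on algebras}, since fibrations and weak equivalences in $\mathrm{CMon}(\C)$ and $\mathrm{CMon}(\D)$ are detected on the underlying model categories and $G$ is right Quillen on the underlying level. The same observation, applied to the characterization of fibrant commutative monoids as underlying fibrant objects, gives that $G$ reflects weak equivalences between fibrant commutative monoids.

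For the derived unit, fix a cofibrant commutative monoid $A$ in $\C$. The auxiliary model structures are arranged so that $A$ is cofibrant in $\widetilde{\C}$, and moreover the underlying Quillen pair $F \colon \widetilde{\C} \rightleftarrows \widetilde{\D} \colon G$ is still a Quillen equivalence: it has the same weak equivalences as the given equivalence $F \colon \C \rightleftarrows \D \colon G$, so the total derived functors agree and one is an equivalence of homotopy categories iff the other is. The derived unit $A \to G\widehat{FA}$ of this underlying adjunction (with fibrant replacement taken in $\widetilde{\D}$) is therefore a weak equivalence in $\widetilde{\C}$. The derived unit of $\widetilde{F} \dashv G$ in $\mathrm{CMon}$, read on underlying objects, factors through this via $G$ applied to the natural comparison map $\phi_A \colon FA \to U\widetilde{F}A$ in $\widetilde{\D}$. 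Consequently, once we know $\phi_A$ is a weak equivalence and choose a compatible fibrant replacement of $\widetilde{F}A$ in $\widetilde{\D}$, two-out-of-three gives that the derived unit of $\widetilde{F} \dashv G$ is a weak equivalence, completing the proof modulo the comparison claim.

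The principal task, and the one I expect to be the main obstacle, is thus to show that $\phi_A$ is a weak equivalence for every cofibrant commutative monoid $A$. This is proved by induction along the cellular presentation of $A$ as a transfinite composition of pushouts in $\mathrm{CMon}(\C)$ of maps $\mathbb{P}_\C(i)$ with $i \in I$. The base case is the unit $\mathds{1}_\C = \mathbb{P}_\C(\emptyset)$; since $\widetilde{F}\mathds{1}_\C = \mathds{1}_\D$ and $\mathds{1}_\C$ is cofibrant by hypothesis, $F\mathds{1}_\C \to \mathds{1}_\D$ is a weak equivalence by the unit condition of a weak symmetric monoidal Quillen pair. For a free commutative monoid $\mathbb{P}_\C X$ on a cofibrant $X$, one identifies $U\widetilde{F}\mathbb{P}_\C X = \bigvee_n (FX)^{\wedge n}/\Sigma_n$ and $FU\mathbb{P}_\C X \cong \bigvee_n F(X^{\wedge n})/\Sigma_n$, so the comparison is a weak equivalence summandwise by Hypothesis~\ref{hyp:orbits}. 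For the inductive step, one resolves the pushout $A_{\alpha+1} = A_\alpha \sqcup_{\mathbb{P}_\C X} \mathbb{P}_\C Y$ on underlying objects as a filtration whose subquotients involve $\Sigma_n$-orbits of smash products of $A_\alpha$ with iterated pushout-products of $i$; the commutative monoid axiom, together with the hypothesis that cofibrant commutative monoids forget to cofibrant objects in $\widetilde{\C}$ and the cofibrancy of the sources in $I$, guarantees that each stage of this filtration is a cofibration in $\widetilde{\C}$ (and its image under $F$ a cofibration in $\widetilde{\D}$). Comparing the corresponding filtrations for $FA_{\alpha+1}$ and $U\widetilde{F}A_{\alpha+1}$, one propagates the weak equivalence from $\phi_{A_\alpha}$ and from the free case via Hypothesis~\ref{hyp:orbits}, using left properness (ensured by the monoid axiom) to handle pushouts and preservation of sequential colimits along cofibrations to handle the transfinite stages. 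The delicate point is the coherent assembly of the pushout-in-commutative-monoids filtration, the orbit quotients, and the switch between the model structures $\C$ and $\widetilde{\C}$; the hypotheses on $\widetilde{\C}$ are precisely designed so that every intermediate step lies in a regime where weak equivalences between cofibrations glue well, enabling the induction to proceed.
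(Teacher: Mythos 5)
Your proposal follows the same route as the paper: reduce to showing that the natural comparison $\phi_A\colon FA \to \widetilde{F}A$ (the paper's $\lambda_A$) is a weak equivalence for every cofibrant commutative monoid $A$, exploiting the auxiliary model structures $\widetilde{\C},\widetilde{\D}$ so that $A$ becomes an ordinary cofibrant object and the underlying adjunction is a Quillen equivalence; then prove this by transfinite cellular induction, handling free commutative monoids via Hypothesis~\ref{hyp:orbits} and the pushout step via White's filtration of a pushout of a free map in $\mathrm{CMon}$.

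One caveat: you justify the gluing of weak equivalences across the filtration by ``left properness (ensured by the monoid axiom).'' The monoid axiom does not imply left properness, and left properness is not assumed in the theorem. Fortunately it is also not needed: you have already arranged that each stage of the filtration is a cofibration between cofibrant objects in $\widetilde{\D}$, which is exactly the hypothesis for the cube lemma (\cite[5.2.6]{Hovey99}) used in the paper. Replacing the left-properness appeal with the cube lemma (and the transfinite version \cite[5.1.5]{Hovey99}) closes that gap, and your argument then matches the paper's. The remaining details you elide (that the comparison maps $FQ_n(f)/\Sigma_n \to Q_n(\widetilde{F}f)/\Sigma_n$ are weak equivalences, proved by an inner induction over White's cube, and that $Q_n(f)/\Sigma_n$ is cofibrant by the commutative monoid axiom) are necessary to run the cube lemma and are worth making explicit, but they fall out of the framework you describe.
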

\begin{proof}
	Since the model structures are right lifted, $G$ preserves fibrations and acyclic fibrations and therefore is right Quillen as a functor $\mathrm{CMon}(\D) \to \mathrm{CMon}(\C)$. Let $A$ be a cofibrant commutative monoid in $\C$ and $B$ be a fibrant commutative monoid in $\D$. We must show that the map $A \to GB$ is a weak equivalence in $\C$ if and only if $\widetilde{F}A \to B$ is a weak equivalence in $\D$. 
	
	The adjunction unit of $\widetilde{F} \dashv G$ gives rise to a map $UA \to UG\widetilde{F}A \cong GU\widetilde{F}A$ and hence by adjunction there is a natural map $FA \to \widetilde{F}A$ where we neglect to write the forgetful functors. The composite $FA \to \widetilde{F}A \to B$ is adjunct to the map $A \to GB$ in $\C$. 
	
	Let $X \in \D$. Write $fX$ for a fibrant replacement of $X$ in $\D$ and $\widetilde{f}X$ for a fibrant replacement of $X$ in $\widetilde{\D}$. Consider the square
	\[\begin{tikzcd}
	X \arrow[d] \arrow[r] & \widetilde{f}X \arrow[d] \\
	fX \arrow[r] & \ast
	\end{tikzcd} \]
in which the left vertical arrow is an acyclic cofibration in $\D$, and the right vertical is a fibration in $\widetilde{\D}$ and hence in $\D$. By lifting properties, we obtain a map $fX \to \widetilde{f}X$ which is a weak equivalence. 
	
We must show that the map $A \to GB$ is a weak equivalence in $\C$ if and only if $\widetilde{F}A \to B$ is a weak equivalence in $\D$, where $A$ is cofibrant in $\mathrm{CMon}(\C)$ and $B$ is fibrant in $\mathrm{CMon}(\D)$. By the previous paragraph, we have a weak equivalence $B \to \widetilde{f}B$ where $\widetilde{f}B$ is fibrant in $\widetilde{\D}$ and hence in $\D$. By Ken Brown's lemma, $GB \to G\widetilde{f}B$ is a weak equivalence, and therefore $A \to GB$ is a weak equivalence if and only if $A \to G\widetilde{f}B$ is a weak equivalence. 

Note that since $\C$ and $\widetilde{\C}$ have the same weak equivalences, the identity functor $\C \to \widetilde{\C}$ is a left Quillen equivalence, and similarly for $\D$. Therefore, by the 2-out-of-3 property of Quillen equivalences, $F: \widetilde{\C} \rightleftarrows \widetilde{\D}: G$ is a Quillen equivalence. Since $A$ is cofibrant in $\mathrm{CMon}(\C)$ and hence in $\widetilde{\C}$, and $\widetilde{f}B$ is fibrant in $\widetilde{\D}$, $A \to G\widetilde{f}B$ is a weak equivalence if and only if $FA \to \widetilde{f}B$ is a weak equivalence. Since $B \to \widetilde{f}B$ is a weak equivalence, $FA \to \widetilde{f}B$ is a weak equivalence if and only if $FA \to B$ is a weak equivalence. Since the composite $FA \to \widetilde{F}A \to B$ is adjunct to the map $A \to GB$ in $\C$, it follows that it is enough to show that $\lambda_A\colon FA \to \widetilde{F}A$ is a weak equivalence. 
	
	As $\C$ is cofibrantly generated, $A$ is a retract of a $\mathbb{P}_\C(I)$-cell complex where $I$ is the set of generating cofibrations for $\C$, i.e., $\emptyset \to A$ is a retract of a transfinite composition of pushouts of maps in $\mathbb{P}_\C(I)$. We proceed by transfinite induction on the transfinite composition which defines a cofibrant object. The base case is the claim that $F(\mathds{1}_\C) \to \widetilde{F}(\mathds{1}_\C)$ is a weak equivalence. The left adjoint $\widetilde{F}$ takes the initial object $\mathds{1}_\C$ of $\mathrm{CMon}(\C)$ to the initial object $\mathds{1}_\D$ of $\mathrm{CMon}(\D)$. Since $\mathds{1}_\C$ is cofibrant, $F(\mathds{1}_\C) \to \mathds{1}_\D$ is a weak equivalence by the unit axiom of the weak monoidal Quillen adjunction $F \dashv G$. Therefore the base case holds.
	
	Write $\mathbb{P}^nX = X^{\wedge n}/\Sigma_n$, so that $\mathbb{P}X = \vee_{n \geq 0}\mathbb{P}^nX.$ By Hypothesis~\ref{hyp:orbits}, if $X$ is a cofibrant object of $\C$, $F(\mathbb{P}^n_\C X) = F(X^{\wedge n})/\Sigma_n \to (FX)^{\wedge n}/\Sigma_n = \mathbb{P}^n_\D(FX)$ is a weak equivalence. Since $X$ is cofibrant in $\C$, $\mathbb{P}_{\C}^nX$ is cofibrant in $\widetilde{\C}$ and therefore $F(\mathbb{P}_{\C}^nX)$ is cofibrant in $\widetilde{\D}$. In a similar way, one sees that $\mathbb{P}_{\D}^n(FX)$ is cofibrant in $\widetilde{\D}$. Therefore $F(X^{\wedge n})/\Sigma_n \to (FX)^{\wedge n}/\Sigma_n$ is a weak equivalence between cofibrant objects in $\widetilde{\D}$ and Ken Brown's lemma shows that taking coproducts preserves this weak equivalence. Therefore, \[F\mathbb{P}_\C X = F\bigvee_{n \geq 0}X^{\wedge n}/\Sigma_n \cong \bigvee_{n \geq 0}F(X^{\wedge n})/\Sigma_n \xrightarrow{\sim} \bigvee_{n \geq 0}(FX)^{\wedge n}/\Sigma_n = \mathbb{P}_\D FX\] is a weak equivalence. Hence using the isomorphism $\mathbb{P}_\D FX \cong \widetilde{F}\mathbb{P}_\C X$, if $X$ is cofibrant in $\C$, one sees that both $F\mathbb{P}_\C X \to \widetilde{F}\mathbb{P}_\C X$ and $F\mathbb{P}^n_\C X \to \widetilde{F}\mathbb{P}^n_\C X$ are weak equivalences.
	
We now prove that if
\[\begin{tikzcd}
\mathbb{P}_\C X \arrow[d] \arrow[r] & Y \arrow[d, "f"] \\
\mathbb{P}_\C X' \arrow[r] & P
\end{tikzcd} \]
is a pushout square in $\mathrm{CMon}(\C)$, and $FY \to \widetilde{F}Y$ is a weak equivalence where $Y$ is cofibrant, then $FP \to \widetilde{F}P$ is a weak equivalence. Since $I$ consists of cofibrations with cofibrant source, we may assume that $X$ and $X'$ are cofibrant in $\C$. By~\cite[B.2]{White17}, $f\colon Y \to P$ has a filtration $$Y = P_0 \to P_1 \to \cdots$$
where $P_{n-1} \to P_n$ is defined by the pushout
\[\begin{tikzcd}
Y \wedge Q_n(f)/\Sigma_n \arrow[d] \arrow[r] & P_{n-1} \arrow[d] \\
Y \wedge \mathbb{P}^n_\C X' \arrow[r] & P_n
\end{tikzcd} \]
in $\C$. For our purposes, it is not important precisely what $Q_n(f)$ is, apart from the fact that it is a colimit of a punctured $n$-dimensional cube whose vertices are given by tensor products of $X$, $X'$ and $Y$. It follows from the commutative monoid axiom that $Q_n(f)/\Sigma_n$ is cofibrant in $\C$, see~\cite[Proof of 4.17]{White17} for details.

Since $F$ sends pushouts in $\C$ to pushouts in $\D$, 
\[\begin{tikzcd}
F(Y \wedge Q_n(f)/\Sigma_n) \arrow[d] \arrow[r] & FP_{n-1} \arrow[d] \\
F(Y \wedge \mathbb{P}^n_\C X') \arrow[r] & FP_n
\end{tikzcd} \]
is a pushout in $\D$. Since $\widetilde{F}$ preserves pushouts of commutative monoids, 
\[\begin{tikzcd}
\mathbb{P}_\D FX \arrow[r] \arrow[d] & \widetilde{F}Y \arrow[d, "\widetilde{F}f"] \\
\mathbb{P}_\D FX' \arrow[r] & \widetilde{F}P
\end{tikzcd} \]
is a pushout in $\mathrm{CMon}(\D)$ using the isomorphism $\mathbb{P}_\D F \cong \widetilde{F}\mathbb{P}_\C.$ Applying~\cite[B.2]{White17} again, we obtain a filtration $\widetilde{F}Y = R_0 \to R_1 \to \cdots$ of $\widetilde{F}f\colon \widetilde{F}Y \to \widetilde{F}P$ where $R_{n-1} \to R_n$ is defined by the pushout
\[\begin{tikzcd}
\widetilde{F}Y \wedge Q_n(\widetilde{F}f)/\Sigma_n \arrow[d] \arrow[r] & R_{n-1} \arrow[d] \\
\widetilde{F}Y \wedge \mathbb{P}^n_\D FX' \arrow[r] & R_n
\end{tikzcd} \]
in $\D$. This filtration is compatible with the filtration of $Y \to P$ and therefore $\lambda_Y$ sends $FP_n$ to $R_n$. By applying $F$ to the pushout square shown Diagram 1 in~\cite[Proof of A.1]{White17} and using that $F \dashv G$ is a weak monoidal Quillen pair, one argues by induction that there is a natural weak equivalence $FQ_n(f) \xrightarrow{\sim} Q_n(\widetilde{F}f).$ Similarly, since taking orbits commutes with taking pushouts, there is a natural weak equivalence $FQ_n(f)/\Sigma_n \xrightarrow{\sim} Q_n(\widetilde{F}f)/\Sigma_n$ by an inductive argument on Diagram 6 in~\cite[Proof of A.3]{White17}.

We now show by induction that $\lambda_{P_n}\colon FP_n \to R_n$ is a weak equivalence. The base case holds since $\lambda_{P_0} = \lambda_Y$ which was a weak equivalence by assumption. Suppose that $\lambda_{P_{n-1}}$ is a weak equivalence. Consider the diagram
\[
\begin{tikzcd}
F(Y \wedge Q_n(f)/\Sigma_n) \arrow[dd] \arrow[rrr] \arrow[dr] & & & \widetilde{F}Y \wedge Q_n(\widetilde{F}f)/\Sigma_n \arrow[dd] \arrow[dr] & \\
& FP_{n-1} \arrow[rrr, crossing over] & & & R_{n-1} \arrow[dd] \\
F(Y \wedge \mathbb{P}^n_\C X') \arrow[dr] \arrow[rrr, crossing over] & & & \widetilde{F}Y \wedge \mathbb{P}^n_\D FX' \arrow[dr]  & \\
& FP_n \arrow[rrr] \arrow[uu, leftarrow, crossing over] & & & R_n 
\end{tikzcd}
 \]
in which the leftmost face and the rightmost face are pushouts in $\D$. The horizontal map $FP_{n-1} \to R_{n-1}$ is a weak equivalence by the inductive hypothesis. 

The horizontal map $F(Y \wedge Q_n(f)/\Sigma_n) \to \widetilde{F}Y \wedge Q_n(\widetilde{F}f)/\Sigma_n$ factors as the composite $$F(Y \wedge Q_n(f)/\Sigma_n) \to FY \wedge FQ_n(f)/\Sigma_n \to \widetilde{F}Y \wedge FQ_n(f)/\Sigma_n \to \widetilde{F}Y \wedge Q_n(\widetilde{F}f)/\Sigma_n$$ where the first map is a weak equivalence since $F \dashv G$ is a weak monoidal Quillen pair. The map $FY \to \widetilde{F}Y$ is a weak equivalence between cofibrant objects in $\widetilde{\D}$, and the map $FQ_n(f)/\Sigma_n \xrightarrow{\sim} Q_n(\widetilde{F}f)/\Sigma_n$ is a weak equivalence between cofibrant objects in $\D$. Since cofibrant objects in $\D$ are also cofibrant in $\widetilde{\D}$, both of these maps are weak equivalences between cofibrant objects in $\widetilde{\D}$. By Ken Brown's lemma, tensoring with cofibrant objects preserves weak equivalences between cofibrant objects, and hence the second and third map are weak equivalences. 

The horizontal map $F(Y \wedge \mathbb{P}^n_\C X') \to \widetilde{F}Y \wedge \mathbb{P}^n_\D FX'$ is a weak equivalence since it factors as the composite $$F(Y \wedge \mathbb{P}^n_\C X') \to FY \wedge F\mathbb{P}^n_\C X' \to \widetilde{F}Y \wedge \widetilde{F}\mathbb{P}^n_\C X' \cong \widetilde{F}Y \wedge \mathbb{P}^n_\D FX'.$$ Therefore, the map $FP_n \to R_n$ is a weak equivalence by~\cite[5.2.6]{Hovey99}. Each filtration map is a cofibration between cofibrant objects and hence by Ken Brown's lemma and~\cite[5.1.5]{Hovey99}, the map $FP \to \widetilde{F}P$ is a weak equivalence. 

It remains to show that the property is preserved under the transfinite compositions used to build relative cell complexes which again follows from~\cite[5.1.5]{Hovey99}. Therefore for any cofibrant commutative monoid object $A$ of $\C$, we have that the map $FA \to \widetilde{F}A$ is a weak equivalence which concludes the proof.
\end{proof}
\begin{rem}
	The hypothesis that $\C$ and $\D$ satisfy the commutative monoid axiom and the monoid axiom ensures that the categories of commutative monoids inherit a right lifted model structure~\cite[3.2]{White17}.
\end{rem}

\begin{rem}
In some cases such as rational chain complexes, cofibrant commutative algebras are cofibrant as modules, see Example~\ref{ex:convenient}. In such examples, one can take $\C = \widetilde{\C}$ in the previous theorem. 
\end{rem}

\begin{thm}\label{thm:commHQalg}
	There is a zig-zag of Quillen equivalences between the category of commutative $H\mathbb{Q}$-algebras and the category of commutative rational DGAs.
\end{thm}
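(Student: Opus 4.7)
The plan is to apply Theorem~\ref{thm:lift to comm mon} to each of the three weak symmetric monoidal Quillen equivalences in the zig-zag from Theorem~\ref{thm:main1}, thereby producing the required zig-zag on the level of commutative monoids. Concretely, for each of the adjunctions
\[
Z : \mod{H\mathbb{Q}} \rightleftarrows \symsp(\sq) : U, \quad L : \symsp(\sq) \rightleftarrows \symsp(\chq) : \phi^* N, \quad D : \symsp(\chq) \rightleftarrows \mathrm{Ch}_\mathbb{Q} : R,
\]
I would equip the source and target with their \emph{positive} flat model structures (so that commutative algebras carry a right lifted model structure by Corollary~\ref{cor:flatcofibrants}), play the role of $\widetilde{\C}$ and $\widetilde{\D}$ with the corresponding (non-positive) flat model structures, and verify the hypotheses of Theorem~\ref{thm:lift to comm mon}. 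For $\mathrm{Ch}_\mathbb{Q}$, Example~\ref{ex:convenient} lets me take $\widetilde{\C}=\C$, so no auxiliary model structure is needed there.

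First I would check the structural hypotheses uniformly. The commutative monoid axiom and monoid axiom hold for symmetric spectra in each of our base categories as a consequence of the results of Pavlov--Scholbach recalled in Section~\ref{sec:flat}, and hold trivially for chain complexes over $\mathbb{Q}$ and for $H\mathbb{Q}$-modules. Cofibrant commutative monoids in the positive flat model structure are cofibrant in the flat model structure by Corollary~\ref{cor:flatcofibrants}; cofibrant commutative DGAs are cofibrant as $\mathbb{Q}$-modules by Example~\ref{ex:convenient}. The generating cofibrations of the positive flat structure may be taken to have cofibrant source (they are built from $G/H\otimes i$ for $i$ a generating cofibration in levels $n\geq 1$), and the monoidal units $H\mathbb{Q}$, $\mathrm{Sym}(\widetilde{\mathbb{Q}}S^1)$, $\mathrm{Sym}(\mathbb{Q}[1])$ and $\mathbb{Q}$ are all cofibrant in the relevant positive flat structures.

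The only step requiring real verification is Hypothesis~\ref{hyp:orbits}. For the two outer adjunctions, $Z$ and $D$ are strong symmetric monoidal, so Lemma~\ref{lem:orbits}(i) applies directly. For the middle adjunction $L \dashv \phi^*N$, $L$ is only weak symmetric monoidal, and so I would invoke Lemma~\ref{lem:orbits}(ii): since the categories $\sq$ and $\chq$ are $\mathbb{Q}$-additive, Proposition~\ref{prop:injective} implies that naive cofibrations coincide with (genuine, equivalently underlying) cofibrations for any finite group action, and the same argument then shows that underlying cofibrant objects in $\Sigma_n\symsp(\chq)$ are naive cofibrant. This is precisely the hypothesis needed to verify Lemma~\ref{lem:orbits}(ii), so Hypothesis~\ref{hyp:orbits} holds for $L\dashv \phi^*N$.

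The main obstacle I anticipate is bookkeeping: one must check that the positive flat and flat model structures indeed fit into the commuting square of Quillen adjunctions demanded by Theorem~\ref{thm:lift to comm mon}, and that the Quillen equivalences of Theorem~\ref{thm:main1}, which were stated for the flat model structures, remain Quillen equivalences when one replaces the flat by the positive flat structure on the source and target. The latter follows because the positive flat and flat model structures share the same weak equivalences and the identity is a left Quillen functor from positive flat to flat, so the 2-out-of-3 property of Quillen equivalences applies exactly as in the proof of Proposition~\ref{prop:step1}. Assembling these three lifted Quillen equivalences then gives the desired zig-zag between commutative $H\mathbb{Q}$-algebras and commutative rational DGAs.
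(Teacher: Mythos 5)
Your proposal follows essentially the same route as the paper: apply Theorem~\ref{thm:lift to comm mon} to the three adjunctions of Theorem~\ref{thm:main1}, taking $\C$ to be the positive flat model structure, $\widetilde{\C}$ the flat model structure (with $\C=\widetilde{\C}$ for chain complexes), and verify Hypothesis~\ref{hyp:orbits} via Lemma~\ref{lem:orbits}(i) for the two strong symmetric monoidal adjunctions and Lemma~\ref{lem:orbits}(ii), with an averaging argument, for $L\dashv\phi^*N$. A few details need tightening, though. Since $L$ is the left adjoint, going $\symsp(\chq)\to\symsp(\sq)$, the target category $\D$ in Lemma~\ref{lem:orbits}(ii) is $\symsp(\sq)$, so the naive-cofibrancy check must be done for $\Sigma_n\symsp(\sq)$, not $\Sigma_n\symsp(\chq)$ as you wrote (your $\mathbb{Q}$-additivity argument does apply to both, and one must also make the passage from a levelwise statement about $\sq$ or $\chq$ to a statement about equivariant symmetric spectra explicit, as the paper does). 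Your claim that the monoidal units are cofibrant in the positive flat structures is false: the positive model structure is designed precisely so that the unit fails to be cofibrant (this is what dodges Lewis's obstruction); what one actually has is cofibrancy in $\widetilde{\C}$, and since $\C$ and $\widetilde{\C}$ share weak equivalences this is what the base case of the proof of Theorem~\ref{thm:lift to comm mon} really needs. Finally, confirming that $Z\dashv U$ and $L\dashv\phi^*N$ remain Quillen adjunctions in the positive flat structures is not pure bookkeeping; it requires Dugger's criterion (Proposition~\ref{prop:Dugger}) together with the characterization of positively flat fibrant objects, as carried out in the paper's argument. None of these slips are fatal, and your overall plan matches the paper's proof.
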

\begin{proof}
Consider the adjunctions
	\[
\begin{tikzcd}
\mod{H\mathbb{Q}}^\mathrm{pf} \arrow[r, yshift=1mm, "Z"] & \symsp(\sq)_{\mathrm{pf}} \arrow[r, yshift=-1mm, "\phi^* N"'] \arrow[l,yshift=-1mm, "U"] & \symsp(\chq)_{\mathrm{pf}} \arrow[l, yshift=1mm, "L"'] \arrow[r, yshift=1mm, "D"] & \mathrm{Ch}_\mathbb{Q} \arrow[l, yshift=-1mm, "R"]
\end{tikzcd}\]
where $\mathrm{pf}$ denotes the positive flat model structure. Recall from Corollary~\ref{cor:stable and flat} that on $\symsp(\sq)$ and $\symsp(\chq)$ the positive stable and positive flat model structures are the same.

Firstly, we must justify that these are Quillen adjunctions. The adjunction $D \dashv R$ is Quillen since it can be viewed as the composite of Quillen adjunctions
\[\begin{tikzcd}
\symsp(\chq)_{\mathrm{pf}} \arrow[r, yshift=1mm, "1"] & \symsp(\chq) \arrow[l, yshift=-1mm, "1"] \arrow[r, yshift=1mm, "D"] & \mathrm{Ch}_\mathbb{Q} \arrow[l, yshift=-1mm, "R"]
\end{tikzcd} \]
where the second adjunction was proved to be Quillen in~\cite[4.7]{Shipley07}.

For the adjunction $Z \dashv U$, by Proposition~\ref{prop:Dugger} it is sufficient to check that the right adjoint $U$ preserves acyclic positive flat fibrations and positive flat fibrations between positive flat fibrants. Recall that a map is an acyclic positive flat fibration if and only if it is a levelwise acyclic blended fibration for levels $n > 0$, and that a map is a positive flat fibration between positive flat fibrants if and only if it is a levelwise blended fibration for levels $n > 0$ between positively flat fibrant objects. By~\cite[3.2.1]{PavlovScholbach18}, an object $X$ of $\symsp(\C,K)$ is positively flat fibrant if and only if $X$ is levelwise blended fibrant for levels $n>0$ and $X_n \to \underline{\mathrm{Hom}}(K,X_{n+1})$ is a naive weak equivalence for all $n\geq 0$ where $\underline{\mathrm{Hom}}(K,-)$ is the right adjoint to $K \otimes -$. One notes that all the conditions that must be checked, except for the last condition, are all levelwise. Therefore, applying the arguments given in Lemma~\ref{lem:Udetects} and to levels $n>0$ verifies the necessary levelwise conditions. The remaining condition that $X_n \to \underline{\mathrm{Hom}}(K,X_{n+1})$ is a naive weak equivalence is unchanged between the flat model structure and the positive flat model structure. This condition was also verified in Lemma~\ref{lem:Udetects}. For the $L \dashv \res[\phi]N$ adjunction one can argue similarly, using~\cite[4.4]{Shipley07}.

We now apply Theorem~\ref{thm:lift to comm mon}. For each of the categories of symmetric spectra, we take $\C$ to be the version equipped with the positive flat model structure, and $\widetilde{\C}$ to be equipped with the flat model structure. For the category of chain complexes, we take $\C = \widetilde{\C}$. In each case, cofibrant commutative algebras forget to flat cofibrant modules by Corollary~\ref{cor:flatcofibrants}. Hypothesis~\ref{hyp:orbits} holds for the first and last adjunctions since they are strong symmetric monoidal Quillen equivalences and therefore they give Quillen equivalences on the commutative monoids by Theorem~\ref{thm:lift to comm mon}. 

For the $L \dashv \res[\phi]N$ adjunction, we argue that condition (ii) in Lemma~\ref{lem:orbits} holds. We show that for a finite group $G$, if $f\colon X \to Y$ is an underlying cofibration in $G\text{-}\symsp(\sq)_{\mathrm{pf}}$ then $f$ is a naive cofibration in $G\text{-}\symsp(\sq)_{\mathrm{pf}}$. A $G$-object $X$ in $\symsp(\sq)$ consists of $G \times \Sigma_n$-objects $X(n)$ in $\sq$ with $G \times \Sigma_n$-equivariant structure maps. Similarly, a map $\phi\colon X \to Y$ between objects in $G\text{-}\symsp(\sq)$ consists of a collection of $G \times \Sigma_n$-equivariant maps $\phi(n)\colon X(n) \to Y(n)$ making the evident diagrams commute. 

Write $U$ for the forgetful functor $G\text{-}\symsp(\sq) \to \symsp(\sq)$. Suppose that $f\colon X \to Y$ is an underlying cofibration in $G\text{-}\symsp(\sq)_{\mathrm{pf}}$, i.e., $Uf\colon UX \to UY$ is a positive flat cofibration, and that $p\colon A \to B$ is a naive acyclic fibration in $G\text{-}\symsp(\sq)_{\mathrm{pf}}$, i.e., $Up$ is an acyclic positive flat fibration. Therefore $Uf$ has the left lifting property with respect to $Up$. It remains to argue that the lift $\theta\colon UY \to UA$ can be made into an equivariant map $\phi\colon Y \to A$. The lift $\theta\colon UY \to UA$ is a collection $\theta(n)\colon Y(n) \to A(n)$ of $\Sigma_n$-equivariant maps. Since the maps are determined levelwise, one can apply the averaging method as in the proof of Proposition~\ref{prop:injective} to construct $G \times \Sigma_n$-equivariant maps $\phi(n)\colon Y(n) \to A(n)$ and it follows that $\phi$ is a map in $G\text{-}\symsp(\sq)$ which is also a lift. Therefore, $f$ is a naive cofibration in $G\text{-}\symsp(\sq)$. Hence by Theorem~\ref{thm:lift to comm mon}, the middle Quillen equivalence also lifts to the commutative monoids.
\end{proof}

\section{A symmetric monoidal equivalence for modules}\label{sec:shipley2}
In this section, we give a symmetric monoidal Quillen equivalence between the categories of modules over a commutative $H\mathbb{Q}$-algebra and a commutative DGA. We note that this result has been assumed without proof in the literature; for more details see the introduction. We firstly explain why this result is not an immediate corollary of the zig-zag of Quillen equivalences $\mod{H\mathbb{Q}} \simeq_Q \mathrm{Ch}_\mathbb{Q}$. 

Let $F: \C \rightleftarrows \D :G$ be a strong symmetric monoidal Quillen equivalence and suppose that the unit objects of $\C$ and $\D$ are cofibrant. If $S$ is a \emph{cofibrant} monoid in $\C$, Schwede-Shipley~\cite[3.12]{SchwedeShipley03} show that $F:\mod{S}(\C) \rightleftarrows \mod{FS}(\D):G$ is a Quillen equivalence. Now suppose that $S$ is a commutative monoid in $\C$, which is not cofibrant as a monoid. Since $S$ is commutative, the category $\mod{S}(\C)$ of modules is symmetric monoidal, with tensor product defined by the coequalizer of the two maps
\begin{center}
	\begin{tikzcd}
	M \otimes_S N = \mathrm{coeq}(M \otimes S \otimes N \arrow[r, yshift=1mm] \arrow[r, yshift=-1mm] & M \otimes N)
	\end{tikzcd}
\end{center} defined by the action of $S$ on $M$ and $N$.

However, a cofibrant replacement $q\colon cS \xrightarrow{\sim} S$ as a monoid will no longer be commutative, and hence the zig-zag of Quillen equivalences \begin{center}
	\begin{tikzcd}
	\mod{S}(\C) \arrow[rr, "{\res[q]}"', yshift=-1mm] & & \mod{cS}(\C) \arrow[ll, "- \otimes_{cS} S"', yshift=1mm] \arrow[rr, "F", yshift=1mm] & & \mod{FcS}(\D) \arrow[ll, "G", yshift=-1mm]
	\end{tikzcd}
\end{center} 
cannot be symmetric monoidal. We explain how to rectify this. 

Before we can prove the desired symmetric monoidal Quillen equivalence, we require an abstract lemma about lifting symmetric monoidal Quillen equivalences to the categories of modules. We note that this first statement is a counterpart to~\cite[3.12(2)]{SchwedeShipley03}. The proof is effectively the same.
\begin{lem}\label{lem:cofremoval}
	Let \begin{center}\begin{tikzcd}\C \arrow[r, yshift=1mm, "F"] &  \arrow[l, yshift=-1mm, "G"] \D\end{tikzcd}\end{center} be a strong symmetric monoidal Quillen equivalence and let $S$ be a commutative monoid in $\C$. Suppose that $\C$ and $\D$ satisfy the monoid axiom. If $F$ preserves all weak equivalences and Quillen invariance holds in $\C$ and $\D$, then \begin{center}\begin{tikzcd}\mod{S} \arrow[r, yshift=1mm, "F"] &  \arrow[l, yshift=-1mm, "G"] \mod{FS}\end{tikzcd}\end{center} is a strong symmetric monoidal Quillen equivalence.
\end{lem}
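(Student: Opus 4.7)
The plan is to reduce the claim to the cofibrant-monoid case handled by Schwede-Shipley~\cite[3.12(1)]{SchwedeShipley03} by means of a cofibrant replacement together with Quillen invariance. I would proceed in two stages: first verify that the adjunction descends to modules as a strong symmetric monoidal Quillen adjunction, and then establish the Quillen equivalence itself by a 2-out-of-3 argument applied to a commuting square.

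For the first stage, since $F$ is strong symmetric monoidal, $FS$ is a commutative monoid in $\D$ and the adjunction unit $S \to GFS$ is a monoid map, so $G$ naturally sends $FS$-modules to $S$-modules and the adjunction $F \dashv G$ lifts to module categories. The module categories inherit right-lifted model structures by the monoid axiom (Theorem~\ref{thm:model structure on algebras}), and $G$ preserves fibrations and weak equivalences at the underlying level, giving a Quillen adjunction. Strong monoidality on modules follows from the isomorphism $F(M \otimes_S N) \cong FM \otimes_{FS} FN$, which holds because $F$, being a strong monoidal left adjoint, preserves both the tensor products and the defining coequalizer $M \otimes S \otimes N \rightrightarrows M \otimes N$.

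For the second stage, choose a cofibrant replacement $q\colon cS \xrightarrow{\sim} S$ in the model category of (not necessarily commutative) monoids in $\C$. Since $F$ preserves all weak equivalences by hypothesis, $Fq\colon FcS \to FS$ is also a weak equivalence of monoids in $\D$. One then considers the square of left Quillen adjunctions
\[
\begin{tikzcd}
\mod{cS} \arrow[r, "F"] \arrow[d, "q_*"'] & \mod{FcS} \arrow[d, "(Fq)_*"] \\
\mod{S} \arrow[r, "F"'] & \mod{FS}
\end{tikzcd}
\]
which commutes up to natural isomorphism since $F$ strong monoidal implies $F(S \otimes_{cS} -) \cong FS \otimes_{FcS} F(-)$. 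The two vertical adjunctions are Quillen equivalences by the assumed Quillen invariance applied to $q$ and $Fq$, and the top horizontal adjunction is a Quillen equivalence by Schwede-Shipley~\cite[3.12(1)]{SchwedeShipley03} since $cS$ is a cofibrant monoid. The 2-out-of-3 property for Quillen equivalences then forces the bottom adjunction $F\colon \mod{S} \rightleftarrows \mod{FS}\colon G$ to be a Quillen equivalence.

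The main subtlety, and the reason the hypothesis that $F$ preserve all weak equivalences is essential, is that a cofibrant replacement of $S$ as a commutative monoid is not assumed available in this setting, so one must replace $S$ merely as an associative monoid, losing commutativity in $cS$, and then use Quillen invariance in $\D$ along the (not necessarily commutative) monoid map $Fq$ to pass back to the commutative world. The fact that the symmetric monoidal structure on the final adjunction survives intact is not in question, since it is established in the first stage directly, independently of the intermediate replacement.
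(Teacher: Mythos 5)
Your proposal is correct and takes essentially the same route as the paper: replace $S$ by a cofibrant (associative) monoid $cS$, use the hypothesis that $F$ preserves all weak equivalences to see $Fq$ is a weak equivalence, invoke Quillen invariance for the two extension-of-scalars adjunctions and Schwede--Shipley 3.12(1) for the cofibrant monoid $cS$, and conclude by 2-out-of-3; the strong monoidal structure is obtained, as in the paper, from $F$ being strong monoidal and preserving the coequalizers defining $\otimes_S$. The only difference is presentational, namely that you spell out the first stage (the lift of $F \dashv G$ to modules as a strong symmetric monoidal Quillen adjunction) in more detail than the paper, which folds it into the final sentence of its proof.
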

\begin{proof}
	Let $q\colon cS \to S$ be a cofibrant replacement of $S$ as a monoid in $\C$. As $F$ preserves all weak equivalences $Fq\colon FcS \to FS$ is a weak equivalence. Consider the diagram of left Quillen functors
	\begin{center}
		\begin{tikzcd}
		\mod{S} \arrow[d, "F"'] & & \mod{cS} \arrow[ll, "S \otimes_{cS} -"'] \arrow[d, "F"] \\
		\mod{FS} & & \mod{FcS} \arrow[ll, "FS \otimes_{FcS} -"]
		\end{tikzcd}
	\end{center}
	which is commutative since $F$ is strong monoidal. By~\cite[3.12(1)]{SchwedeShipley03} the right hand vertical is a Quillen equivalence, and by Quillen invariance the horizontals are Quillen equivalences. Hence by 2-out-of-3 the left vertical is a Quillen equivalence as required. As a functor between the module categories, $F$ is strong symmetric monoidal since the tensor product in the module category $\mod{S}$ is defined by a coequalizer which $F$ preserves. Therefore \begin{center}\begin{tikzcd}\mod{S} \arrow[r, yshift=1mm, "F"] &  \arrow[l, yshift=-1mm, "G"] \mod{FS}\end{tikzcd}\end{center} is a strong symmetric monoidal Quillen equivalence.
\end{proof}

We recall from Shipley~\cite[1.2]{Shipley07} the zig-zag of natural weak equivalences between $Zc$ and $\res[\alpha]\widetilde{\mathbb{Q}}$ where $\alpha$ is the ring map $H\mathbb{Q} \to \widetilde{\mathbb{Q}}H\mathbb{Q}$ induced by the unit of the monad structure on $\widetilde{\mathbb{Q}}$. Let $\beta\colon \widetilde{\mathbb{Q}}H\mathbb{Q} \to H\mathbb{Q}$ be the ring map induced by the multiplication map of the monad structure.

We have $Zc = \ext[\beta]\widetilde{\mathbb{Q}}c \cong \res[\alpha]\res[\beta]\ext[\beta]\widetilde{\mathbb{Q}}c$ since $\beta\alpha = 1$. There is then a natural map $\res[\alpha]\widetilde{\mathbb{Q}}c \to \res[\alpha]\res[\beta]\ext[\beta]\widetilde{\mathbb{Q}}c$ arising from the unit of the $\ext[\beta] \dashv \res[\beta]$ adjunction. This is a weak equivalence since $\widetilde{\mathbb{Q}}$ preserves cofibrant objects, the $\ext[\beta] \dashv \res[\beta]$ adjunction is a Quillen equivalence and $\res[\alpha]$ preserves all weak equivalences. Finally there is a natural map $\res[\alpha]\widetilde{\mathbb{Q}}c \to \res[\alpha]\widetilde{\mathbb{Q}}$ which is a weak equivalence as $\res[\alpha]$ and $\widetilde{\mathbb{Q}}$ preserve all weak equivalences. We can now apply the previous lemma to obtain the desired statement. 
\begin{thm}\label{thm:symmmon}
	Let $A$ be a commutative $H\mathbb{Q}$-algebra. There are zig-zags of weak symmetric monoidal Quillen equivalences \[\mod{A}^\mathrm{stable} \simeq_Q \mod{\underline{\Theta} A} \quad \text{and} \quad \mod{A}^\mathrm{flat} \simeq_Q \mod{\underline{\Theta} A}\] where $\underline{\Theta} A = D\res[\phi]N\res[\alpha]\widetilde{\mathbb{Q}}A$ is a commutative DGA.
\end{thm}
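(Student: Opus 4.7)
The plan is to assemble the zig-zag on module categories from the main chain of Theorem~\ref{thm:main1} using Lemma~\ref{lem:cofremoval} together with the chain of natural weak equivalences $Z(cA) \simeq \res[\alpha]\widetilde{\mathbb{Q}}A$ of commutative monoids just recalled. Before starting, I would note that $\underline{\Theta}A$ is a commutative DGA: the functors $\widetilde{\mathbb{Q}}$ and $D$ are strong symmetric monoidal while $\res[\alpha]$ and $\res[\phi]N$ are lax symmetric monoidal, so their composite preserves commutative monoids. Moreover, by Proposition~\ref{prop:omnibus}(iii) the identity is a left Quillen equivalence from the stable to the flat structure on $\mod{A}$, and by Corollary~\ref{cor:stable and flat} the flat and stable structures coincide on $\symsp(\sq)$ and $\symsp(\chq)$, so it is enough to prove the flat case and then compose.

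For the flat case I would assemble the chain
$$\mod{A}^\mathrm{flat} \simeq_Q \mod{\res[\alpha]\widetilde{\mathbb{Q}}A} \simeq_Q \mod{\res[\phi]N\res[\alpha]\widetilde{\mathbb{Q}}A} \simeq_Q \mod{\underline{\Theta}A}$$
one step at a time, noting that each intermediate commutative monoid is obtained by applying the lax monoidal right adjoints to $A$. For the first step, apply Lemma~\ref{lem:cofremoval} to the strong symmetric monoidal Quillen equivalence $Z \dashv U$ from Proposition~\ref{prop:step1} (using that Quillen invariance of modules holds in the flat setting); this yields $\mod{A}^\mathrm{flat} \simeq_Q \mod{ZA}$ as a strong symmetric monoidal Quillen equivalence. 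The recalled zig-zag of weak equivalences of commutative monoids $Z(cA) \simeq \res[\alpha]\widetilde{\mathbb{Q}}A$, combined with Quillen invariance of modules along each arrow, then identifies $\mod{ZA}$ with $\mod{\res[\alpha]\widetilde{\mathbb{Q}}A}$ via symmetric monoidal Quillen equivalences. The final step is handled identically using the strong symmetric monoidal Quillen equivalence $D \dashv R$.

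The hard part will be the middle step, where the adjunction $L \dashv \res[\phi]N$ is only \emph{weak} symmetric monoidal, so Lemma~\ref{lem:cofremoval} does not apply directly. My approach is to establish a weak monoidal analogue of Lemma~\ref{lem:cofremoval}: given a commutative monoid $B$ in $\symsp(\chq)$, take a cofibrant replacement $cB \to B$ as a commutative monoid in the positive flat model structure, which by Corollary~\ref{cor:flatcofibrants} is also cofibrant as an underlying module. The key is then to produce a zig-zag of weak equivalences relating $L(cB)$ to an honest commutative monoid in $\symsp(\sq)$, via the oplax monoidal structure on $L$ together with a Ken Brown argument in the spirit of the transfinite induction in the proof of Theorem~\ref{thm:lift to comm mon}; the relevant orbit-compatibility is supplied by Hypothesis~\ref{hyp:orbits}, which has already been verified for this adjunction in the proof of Theorem~\ref{thm:commHQalg}. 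Quillen invariance of modules in the flat setting then upgrades this to the required weak symmetric monoidal Quillen equivalence, closing the middle step and completing the zig-zag.
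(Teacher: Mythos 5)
Your target chain --- passing from $\mod{A}$ through $\mod{\res[\alpha]\widetilde{\mathbb{Q}}A}$ and $\mod{\res[\phi]N\res[\alpha]\widetilde{\mathbb{Q}}A}$ to $\mod{\underline{\Theta}A}$ --- is exactly the right one, and so is your remark that $\underline{\Theta}A$ is commutative because the relevant functors are lax or strong symmetric monoidal. However, the first step has a genuine gap. Lemma~\ref{lem:cofremoval} requires the left adjoint $F$ to preserve \emph{all} weak equivalences, and this is not available for $F = Z = \ext[\beta]\widetilde{\mathbb{Q}}$: while $\widetilde{\mathbb{Q}}$ preserves all weak equivalences, the left Quillen functor $\ext[\beta]$ is only guaranteed to preserve weak equivalences between cofibrant objects. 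This is precisely the issue the paper's proof is organized around: the first passage is factored as $\widetilde{\mathbb{Q}} \dashv U$ (where Lemma~\ref{lem:cofremoval} does apply because $\widetilde{\mathbb{Q}}$ preserves all weak equivalences), followed by $\ext[\alpha]^{\widetilde{\mathbb{Q}}A} \dashv \res[\alpha]$, where instead the \emph{right} adjoint $\res[\alpha]$ preserves all weak equivalences and the module-level left adjoint is shown to be strong monoidal, so one can invoke~\cite[3.12(2)]{SchwedeShipley03}. Your plan to then transport along the zig-zag $Zc \simeq \res[\alpha]\widetilde{\mathbb{Q}}$ also quietly conflates $ZA$ with $ZcA$; without the all-weak-equivalences hypothesis on $Z$ these are not interchangeable, and the intermediate objects of that zig-zag are not monoids, so Quillen invariance is not directly applicable.

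For the middle step, your proposed ``weak monoidal analogue of Lemma~\ref{lem:cofremoval}'' via cofibrant replacement of commutative monoids, Hypothesis~\ref{hyp:orbits} and a Ken Brown/transfinite-induction argument is both unnecessary and misaimed. The monoid on the $\symsp(\chq)$ side is obtained by applying the lax symmetric monoidal \emph{right} adjoint $\res[\phi]N$ (which therefore preserves commutativity) to $\res[\alpha]\widetilde{\mathbb{Q}}A$; one never needs $L$ or a lift of it to commutative monoids to define the target. The paper instead shows directly that the induced $\res[\phi]N$ on module categories is lax symmetric monoidal, verifies the weak monoidal Quillen pair criterion of~\cite[3.17]{SchwedeShipley03} at the module level, and upgrades to a Quillen equivalence via~\cite[3.12(2)]{SchwedeShipley03} using that $\res[\phi]N$ preserves all weak equivalences. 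No appeal to Theorem~\ref{thm:lift to comm mon} is required.
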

\begin{proof}
The proof for each part of the theorem follows the same method. Namely, we apply~\cite[3.12(2)]{SchwedeShipley03} together with Lemma~\ref{lem:cofremoval} to the underlying Quillen equivalences given by Shipley~\cite{Shipley07} in the stable case, and given by Theorem~\ref{thm:main1} in the flat case. Since the weak equivalences in both the stable model structure and the flat model structures are the same, the following proof applies in both cases.

The first step is the adjunction
\[\begin{tikzcd}\mod{A}(\mod{H\mathbb{Q}}) \arrow[r, yshift=1mm, "\widetilde{\mathbb{Q}}"] & \mod{\widetilde{\mathbb{Q}}A}\left(\mod{\widetilde{\mathbb{Q}}H\mathbb{Q}}\right). \arrow[l, yshift=-1mm, "U"]\end{tikzcd}\] Since $\widetilde{\mathbb{Q}}$ preserves all weak equivalences, this is a strong symmetric monoidal Quillen adjunction by Lemma~\ref{lem:cofremoval}. 

Recall that there is a ring map $\alpha\colon H\mathbb{Q} \to \widetilde{\mathbb{Q}}H\mathbb{Q}$. Since $\res[\alpha]$ is lax symmetric monoidal it gives rise to a functor
$$\mod{\widetilde{\mathbb{Q}}A}\left(\mod{\widetilde{\mathbb{Q}}H\mathbb{Q}}\right) \xrightarrow{\res[\alpha]} \mod{\res[\alpha]\widetilde{\mathbb{Q}}A}(\Sp^\Sigma(\sq)).$$
It follows from~\cite[\S 3.3]{SchwedeShipley03} that the left adjoint to $\res[\alpha]$ at the level of modules, is given by $\ext[\alpha]^{\widetilde{\mathbb{Q}}A}(M) = \widetilde{\mathbb{Q}}A \otimes_{\res[\alpha]\ext[\alpha]\widetilde{\mathbb{Q}}A} \ext[\alpha]M$. We claim that $\ext[\alpha]^{\widetilde{\mathbb{Q}}A}$ is strong monoidal. As $\ext[\alpha]$ preserves colimits and is strong monoidal, we have
\begin{align*} 
\ext[\alpha](M \otimes_{\res[\alpha]\widetilde{\mathbb{Q}}A} N) &= \ext[\alpha]\mathrm{coeq}(M \otimes \res[\alpha]\widetilde{\mathbb{Q}}A \otimes N \rightrightarrows M \otimes N) \\
&\cong \mathrm{coeq}(\ext[\alpha]M \otimes \ext[\alpha]\res[\alpha]\widetilde{\mathbb{Q}}A \otimes \ext[\alpha]N \rightrightarrows \ext[\alpha]M \otimes \ext[\alpha]N) \\
&= \ext[\alpha]M \otimes_{\ext[\alpha]\res[\alpha]\widetilde{\mathbb{Q}}A} \ext[\alpha]N. \end{align*}
From this, one sees that $$\ext[\alpha]^{\widetilde{\mathbb{Q}}A}(M \otimes_{\res[\alpha]\widetilde{\mathbb{Q}}A} N) \cong \ext[\alpha]^{\widetilde{\mathbb{Q}}A}(M) \otimes_{\widetilde{\mathbb{Q}}A} \ext[\alpha]^{\widetilde{\mathbb{Q}}A}(N)$$ and hence $\ext[\alpha]^{\widetilde{\mathbb{Q}}A}$ is strong symmetric monoidal. Since $\res[\alpha]$ preserves all weak equivalences, it follows from~\cite[3.12(2)]{SchwedeShipley03} that
\[\begin{tikzcd}
\mod{\widetilde{\mathbb{Q}}A}\left(\mod{\widetilde{\mathbb{Q}}H\mathbb{Q}}\right) \arrow[r, "{\res[\alpha]}"', yshift=-1mm] & \mod{\res[\alpha]\widetilde{\mathbb{Q}}A}(\Sp^\Sigma(\sq)) \arrow[l, "{\ext[\alpha]^{\widetilde{\mathbb{Q}}A}}"', yshift=1mm]
\end{tikzcd}\]
is a strong symmetric monoidal Quillen equivalence.

	The next step is the passage along the Dold-Kan type equivalence. 
	Recall that applying the normalization functor levelwise gives a lax monoidal functor $$\Sp^\Sigma(\sq) \to \mod{\mc{N}}\left((\chq)^\Sigma \right)$$ where $\mc{N} = N(\mathrm{Sym}(\widetilde{\mathbb{Q}}S^1)$, and that there is a ring map $\phi\colon \mathrm{Sym}(\mathbb{Q}[1]) \to \mc{N}$. The composite $\res[\phi]N\colon \Sp^\Sigma(\sq) \to \Sp^\Sigma(\chq)$ is lax monoidal.
	
	Let $S$ be a commutative monoid in $\Sp^\Sigma(\sq)$.  We now show that the induced functor $$\res[\phi]N\colon \mod{S}(\Sp^\Sigma(\sq)) \to \mod{\res[\phi]NS}(\Sp^\Sigma(\chq))$$ on the categories of modules is lax symmetric monoidal. 
	 Recall that colimits in categories of modules are calculated in the underlying category of symmetric spectra where they are computed levelwise. Therefore, $N$ preserves colimits as it is an equivalence of categories $\sq \to \chq$. The restriction of scalars $\res[\phi]$ also preserves colimits since it is left adjoint to the coextension of scalars functor. Therefore we have the following map
	\begin{align*}
	\res[\phi]NA \otimes_{\res[\phi]NS} \res[\phi]NB &= \mathrm{coeq}(\res[\phi]NA \otimes \res[\phi]NS \otimes \res[\phi]NB \rightrightarrows \res[\phi]NA \otimes \res[\phi]NB) \\
	&\to \mathrm{coeq}(\res[\phi]N(A \otimes S \otimes B) \rightrightarrows \res[\phi]N(A \otimes B)) \\
	&\cong \res[\phi]N(\mathrm{coeq}(A \otimes S \otimes B \rightrightarrows A \otimes B) \\
	&= \res[\phi]N(A \otimes_S B)
	\end{align*}
giving $\res[\phi]N$ a lax symmetric monoidal structure as a functor between the categories of modules. We now must show that $L^{S} \dashv \res[\phi]N$ is a weak monoidal Quillen pair, where $L^S$ denotes the left adjoint of $\res[\phi]N$. We use the criteria~\cite[3.17]{SchwedeShipley03}. Since the monoidal unit $\res[\phi]NS$ is cofibrant in $\mod{\res[\phi]NS}$ the first condition is that $L^S\res[\phi]NS \to S$ is a weak equivalence. Since $\res[\phi]N$ preserves all weak equivalences~\cite[4.4]{Shipley07} and $\res[\phi]NS$ is cofibrant, this map is the derived counit of the Quillen equivalence $L^S \dashv \res[\phi]N$ and as such is a weak equivalence. The second condition holds since $\res[\phi]NS$ is a generator for the homotopy category of $\mod{\res[\phi]NS}.$ 

By taking $S = \res[\alpha]\widetilde{\mathbb{Q}}A$ in the previous discussion, the adjunction 
\[\begin{tikzcd}\mod{\res[\alpha]\widetilde{\mathbb{Q}}A}(\Sp^\Sigma(\sq)) \arrow[rr, yshift=-1mm, "{\res[\phi]N}"'] & & \mod{\res[\phi]N\res[\alpha]\widetilde{\mathbb{Q}}A}(\Sp^\Sigma(\chq)) \arrow[ll, yshift=1mm, "L^{\res[\alpha]\widetilde{\mathbb{Q}}A}"'] \end{tikzcd}\]
is a weak symmetric monoidal Quillen adjunction. Since $\res[\phi]N$ preserves all weak equivalences~\cite[4.4]{Shipley07}, it follows from~\cite[3.12(2)]{SchwedeShipley03} that this is moreover a weak symmetric monoidal Quillen equivalence.

The final step in the zig-zag is the adjunction 
\[\begin{tikzcd}
\mod{\res[\phi]N\res[\alpha]\widetilde{\mathbb{Q}}A}(\Sp^\Sigma(\chq)) \arrow[r, yshift=1mm, "D"] & \mod{\underline{\Theta}A}(\mathrm{Ch}_\mathbb{Q}) \arrow[l, yshift=-1mm, "R"]
 \end{tikzcd}\]
 which is a strong symmetric monoidal Quillen equivalence by Lemma~\ref{lem:cofremoval}, since $D$ preserves all weak equivalences rationally~\cite[4.8]{Shipley07}.
\end{proof}

\bibliographystyle{plain}
\bibliography{../mybib}

\begin{thebibliography}{10}

\bibitem{Barnes09}
D.~Barnes.
\newblock Classifying rational {$G$}-spectra for finite {$G$}.
\newblock {\em Homology Homotopy Appl.}, 11(1):141--170, 2009.

\bibitem{Barnes17}
D.~Barnes.
\newblock Rational {$O(2)$}-equivariant spectra.
\newblock {\em Homology Homotopy Appl.}, 19(1):225--252, 2017.

\bibitem{BarnesGreenleesKedzioreknaive}
D.~{Barnes}, J.~P.~C. {Greenlees}, and M.~{K\k{e}dziorek}.
\newblock {An algebraic model for rational naive-commutative ring
  {$SO(2)$}-spectra and equivariant elliptic cohomology}.
\newblock {\em arXiv e-prints}, Oct 2018.
\newblock arXiv:1810.03632.

\bibitem{BarnesGreenleesKedziorek19}
D.~Barnes, J.~P.~C. Greenlees, and M.~K\k{e}dziorek.
\newblock An algebraic model for rational na\"{\i}ve-commutative
  {$G$}-equivariant ring spectra for finite {$G$}.
\newblock {\em Homology Homotopy Appl.}, 21(1):73--93, 2019.

\bibitem{BarnesGreenleesKedziorek18}
D.~Barnes, J.~P.~C. Greenlees, and M.~K\k{e}dziorek.
\newblock An algebraic model for rational toral {G}--spectra.
\newblock {\em Algebr. Geom. Topol.}, 19(7):3541--3599, 2019.

\bibitem{BarnesGreenleesKedziorekShipley17}
D.~Barnes, J.~P.~C. Greenlees, M.~K\k{e}dziorek, and B.~Shipley.
\newblock Rational {${\textrm SO}(2)$}-equivariant spectra.
\newblock {\em Algebr. Geom. Topol.}, 17(2):983--1020, 2017.

\bibitem{Barwick10}
C.~Barwick.
\newblock On left and right model categories and left and right {B}ousfield
  localizations.
\newblock {\em Homology Homotopy Appl.}, 12(2):245--320, 2010.

\bibitem{Borceux94}
F.~Borceux.
\newblock {\em Handbook of categorical algebra 2}, volume~51 of {\em
  Encyclopedia of Mathematics and its Applications}.
\newblock Cambridge University Press, Cambridge, 1994.
\newblock Categories and structures.

\bibitem{Bousfield01}
A.~K. Bousfield.
\newblock On the telescopic homotopy theory of spaces.
\newblock {\em Trans. Amer. Math. Soc.}, 353(6):2391--2426, 2001.

\bibitem{Cole06}
M.~Cole.
\newblock Mixing model structures.
\newblock {\em Topology Appl.}, 153(7):1016--1032, 2006.

\bibitem{Dugger01}
D.~Dugger.
\newblock Replacing model categories with simplicial ones.
\newblock {\em Trans. Amer. Math. Soc.}, 353(12):5003--5027, 2001.

\bibitem{DwyerGreenleesIyengar06}
W.~G. Dwyer, J.~P.~C. Greenlees, and S.~Iyengar.
\newblock Duality in algebra and topology.
\newblock {\em Adv. Math.}, 200(2):357--402, 2006.

\bibitem{ElmendorfKrizMandellMay97}
A.~D. Elmendorf, I.~Kriz, M.~A. Mandell, and J.~P. May.
\newblock {\em Rings, modules, and algebras in stable homotopy theory},
  volume~47 of {\em Mathematical Surveys and Monographs}.
\newblock American Mathematical Society, Providence, RI, 1997.
\newblock With an appendix by M. Cole.

\bibitem{Greenlees18c}
J.~P.~C. Greenlees.
\newblock Homotopy invariant commutative algebra over fields.
\newblock In {\em Building bridges between algebra and topology}, Adv. Courses
  Math. CRM Barcelona, pages 103--169. Birkh\"{a}user/Springer, Cham, 2018.

\bibitem{GreenleesShipley11}
J.~P.~C. Greenlees and B.~Shipley.
\newblock An algebraic model for free rational {$G$}-spectra for connected
  compact {L}ie groups {$G$}.
\newblock {\em Math. Z.}, 269(1-2):373--400, 2011.

\bibitem{GreenleesShipley14}
J.~P.~C. Greenlees and B.~Shipley.
\newblock An algebraic model for free rational {$G$}-spectra.
\newblock {\em Bull. Lond. Math. Soc.}, 46(1):133--142, 2014.

\bibitem{GreenleesShipley18}
J.~P.~C. Greenlees and B.~Shipley.
\newblock An algebraic model for rational torus-equivariant spectra.
\newblock {\em J. Topol.}, 11(3):666--719, 2018.

\bibitem{Hirschhorn03}
P.~S. Hirschhorn.
\newblock {\em Model categories and their localizations}, volume~99 of {\em
  Mathematical Surveys and Monographs}.
\newblock American Mathematical Society, Providence, RI, 2003.

\bibitem{Hovey99}
M.~Hovey.
\newblock {\em Model categories}, volume~63 of {\em Mathematical Surveys and
  Monographs}.
\newblock American Mathematical Society, Providence, RI, 1999.

\bibitem{Hovey01}
M.~Hovey.
\newblock Spectra and symmetric spectra in general model categories.
\newblock {\em J. Pure Appl. Algebra}, 165(1):63--127, 2001.

\bibitem{HoveyShipleySmith00}
M.~Hovey, B.~Shipley, and J.~Smith.
\newblock Symmetric spectra.
\newblock {\em J. Amer. Math. Soc.}, 13(1):149--208, 2000.

\bibitem{JoyalTierney07}
A.~Joyal and M.~Tierney.
\newblock Quasi-categories vs {S}egal spaces.
\newblock In {\em Categories in algebra, geometry and mathematical physics},
  volume 431 of {\em Contemp. Math.}, pages 277--326. Amer. Math. Soc.,
  Providence, RI, 2007.

\bibitem{Kedziorek17}
M.~K\k{e}dziorek.
\newblock An algebraic model for rational {$\text{SO}(3)$}-spectra.
\newblock {\em Algebr. Geom. Topol.}, 17(5):3095--3136, 2017.

\bibitem{Lewis91}
L.~Gaunce Lewis, Jr.
\newblock Is there a convenient category of spectra?
\newblock {\em J. Pure Appl. Algebra}, 73(3):233--246, 1991.

\bibitem{HA}
J.~Lurie.
\newblock Higher algebra.
\newblock Draft avaliable from
  \url{http://www.math.harvard.edu/~lurie/papers/HA.pdf}.

\bibitem{MandellMaySchwedeShipley01}
M.~A. Mandell, J.~P. May, S.~Schwede, and B.~Shipley.
\newblock Model categories of diagram spectra.
\newblock {\em Proc. London Math. Soc. (3)}, 82(2):441--512, 2001.

\bibitem{PavlovScholbach18b}
D.~Pavlov and J.~Scholbach.
\newblock Homotopy theory of symmetric powers.
\newblock {\em Homology Homotopy Appl.}, 20(1):359--397, 2018.

\bibitem{PavlovScholbach18}
D.~Pavlov and J.~Scholbach.
\newblock Symmetric operads in abstract symmetric spectra.
\newblock {\em J. Inst. Math. Jussieu}, 18(4):707--758, 2019.

\bibitem{PolWilliamson}
L.~Pol and J.~Williamson.
\newblock The {L}eft {L}ocalization {P}rinciple, completions, and cofree
  {$G$}-spectra.
\newblock {\em J. Pure Appl. Algebra}, 224(11):106408, 2020.

\bibitem{RichterShipley17}
B.~Richter and B.~Shipley.
\newblock An algebraic model for commutative {$H\Bbb{Z}$}-algebras.
\newblock {\em Algebr. Geom. Topol.}, 17(4):2013--2038, 2017.

\bibitem{Robinson87}
A.~Robinson.
\newblock The extraordinary derived category.
\newblock {\em Math. Z.}, 196(2):231--238, 1987.

\bibitem{SchwedeShipley00}
S.~Schwede and B.~Shipley.
\newblock Algebras and modules in monoidal model categories.
\newblock {\em Proc. London Math. Soc. (3)}, 80(2):491--511, 2000.

\bibitem{SchwedeShipley03}
S.~Schwede and B.~Shipley.
\newblock Equivalences of monoidal model categories.
\newblock {\em Algebr. Geom. Topol.}, 3:287--334, 2003.

\bibitem{Shipley02}
B.~Shipley.
\newblock An algebraic model for rational {$S^1$}-equivariant stable homotopy
  theory.
\newblock {\em Q. J. Math.}, 53(1):87--110, 2002.

\bibitem{Shipley04}
B.~Shipley.
\newblock A convenient model category for commutative ring spectra.
\newblock In {\em Homotopy theory: relations with algebraic geometry, group
  cohomology, and algebraic {$K$}-theory}, volume 346 of {\em Contemp. Math.},
  pages 473--483. Amer. Math. Soc., Providence, RI, 2004.

\bibitem{Shipleycorrection}
B.~{Shipley}.
\newblock {Correction to: HZ-algebra spectra are differential graded algebras}.
\newblock {\em arXiv e-prints}, Aug 2007.
\newblock arXiv:0708.1299.

\bibitem{Shipley07}
B.~Shipley.
\newblock {$H\Bbb Z$}-algebra spectra are differential graded algebras.
\newblock {\em Amer. J. Math.}, 129(2):351--379, 2007.

\bibitem{Stephan16}
M.~Stephan.
\newblock On equivariant homotopy theory for model categories.
\newblock {\em Homology Homotopy Appl.}, 18(2):183--208, 2016.

\bibitem{Strickland}
N.~{Strickland}.
\newblock {Is $D$ symmetric monoidal?}
\newblock {\em arXiv e-prints}, Jan 2020.
\newblock arXiv:2001.07404.

\bibitem{White17}
D.~White.
\newblock Model structures on commutative monoids in general model categories.
\newblock {\em J. Pure Appl. Algebra}, 221(12):3124--3168, 2017.

\bibitem{WhiteYau19}
D.~White and D.~Yau.
\newblock Homotopical adjoint lifting theorem.
\newblock {\em Appl. Categ. Structures}, 27(4):385--426, 2019.

\bibitem{changeofgroups}
J.~{Williamson}.
\newblock {Algebraic models of change of groups functors in (co)free rational
  equivariant spectra}.
\newblock {\em arXiv e-prints}, Mar 2020.
\newblock arXiv:2003.12412.

\end{thebibliography}
\end{document}